\numberwithin{equation}{section}
\newtheorem{thm}{Theorem}[section]
\newtheorem{prop}[thm]{Proposition}
\newtheorem{lem}[thm]{Lemma}
\theoremstyle{remark}
\newcommand{\p}{\partial}
\title[GWP for the Kawahara equation]{%Global well-posedness for the 
%Kawahara equation
}
\author[T. K. Kato]{%Takamori Kato
}
\email[Takamori Kato]{tk-kato@math.kyoto-u.ac.jp}
\subjclass[2000]{35Q55}
 \keywords{Kawahara equation, global well-posedness, Cauchy problem, I-method, Fourier restriction norm method, 
low regularity}
\begin{document}

\begin{center}
{\bf GLOBAL WELL-POSEDNESS FOR THE KAWAHARA EQUATION \\
 WITH LOW REGULARITY DATA}

\bigskip {\sc Takamori Kato}

\smallskip {\small Department of Mathematics, Kyoto University

Kyoto, 606-8502, Japan}

\end{center}
\begin{abstract}
We consider the global well-posedness for the Cauchy problem of the Kawahara equation which is one of fifth order KdV type equations. 
We first establish the local well-posedness in a more suitable function space for the global well-posedness by a variant of the Fourier restriction norm method. 
Next, we extend this local solution globally in time by the I-method. 
In the present paper, we apply the I-method to the modified Bourgain space. 
\end{abstract}
\maketitle

\section{Introduction}

We consider the global well-posedness (GWP) for the Cauchy problem of the Kawahara equation which is one of fifth order 
KdV type equations. 
\begin{align} \label{Ka}
\begin{cases}
& \p_t u + \alpha \p_x^5 u + \beta \p_x^3 u + \gamma \p_x (u^2)=0, 
\hspace{0.3cm} (t,x) \in [0,T] \times \mathbb{R}, \\
& u(0, \cdot)= u_0(\cdot) \in H^s (\mathbb{R}), 
\end{cases}
\end{align}
where $\alpha, \beta, \gamma \in \mathbb{R}$ with $\alpha, \gamma \neq 0$ and 
an unknown function $u$ is real valued. By the renormalization of $u$, we may assume that 
$\alpha =-1$, $\gamma=1$ and $\beta=-1$, $0$ or $1$. 
The Kawahara equation models the 
capillary waves on a shallow layer and the magneto-sound propagation in plasma (e.g. \cite{Ka}). 
Moreover this equation has solitary waves in the case $\beta=1$ and many conserved quantities 
(e.g. $L^2$, $H^2$, $\cdots$ ). 
Our aim is to prove GWP for (\ref{Ka}) with low regularity data. 
We first establish the local well-posedness (LWP) by using the Fourier restriction norm method introduced by Bourgain
 \cite{Bo}. Next, we extend these local solutions to global ones by the I-method. This method is developed by 
Colliander, Keel, Staffilani, Takaoka and Tao \cite{CoKe}, \cite{I02}. 

We briefly recall the local well-posedness results for (\ref{Ka}). 
Cui, Deng and Tao \cite{CDT} proved LWP in $H^s$ for $s>-1$, which was based on Kenig, Ponce and Vega's work \cite{KPV93}. 
Wang, Cui and Deng \cite{WCD} refined their argument to show LWP in $H^s$ for $s \geq -7/5$. 
Chen, Li, Miao and Wu \cite{CLMW} proved LWP in $H^s$ for $s >-7/4$, following the $[k, Z]$-multiplier norm method exploited by Tao \cite{Ta}. 
Chen and Guo \cite{CG} later showed LWP in $H^s$ for $s=-7/4$ by using the $\bar{F}^s$ type function space defined below. 
Following an idea of Bejenaru and Tao \cite{BT} and Kishimoto and Tsugawa \cite{KT}, 
we improved the previous results to have LWP in $H^s$ for $s \geq -2$ in \cite{TK}. 

\begin{thm} \label{thm_LWP}
Let $s \geq -2$. Then (\ref{Ka}) is locally well-posed in $H^s$. 
\end{thm}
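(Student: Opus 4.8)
The plan is to realize Theorem~\ref{thm_LWP} as a fixed point for the Duhamel map
\[
\Phi(u)(t) = \psi(t)\, U(t) u_0 - \psi(t) \int_0^t U(t-t')\, \p_x\big(u(t')^2\big)\, dt',
\]
where $U(t)$ is the free Kawahara propagator with Fourier symbol $e^{it\phi(\x)}$, $\phi(\x)=\x^5+\be\x^3$ (after the normalization $\al=-1$, $\ga=1$), and $\psi$ is a smooth time cutoff localizing to $[0,T]$. The workspace should be a Bourgain-type space adapted to $\phi$; the subtlety is that at the endpoint $s=-2$ the classical $X^{s,1/2}$, with norm $\|\LR{\x}^s\LR{\tau-\phi(\x)}^{1/2}\widehat u\|_{L^2_{\tau\x}}$, is too small for the nonlinear estimate to close. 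So, following the philosophy of Bejenaru--Tao and Kishimoto--Tsugawa, I would enlarge it to $Z^s:=X^{s,1/2}\cap Y^s$, where $Y^s$ is an auxiliary norm with an $\ell^1$-type summation over dyadic modulation shells (equivalently a piece measured in $\LR{\x}^s L^2_\x L^1_\tau$) designed precisely to absorb the borderline interaction below. Granting the standard linear estimates $\|\psi U(\cdot)u_0\|_{Z^s}\lesssim\|u_0\|_{H^s}$ and the retarded estimate $\|\psi\int_0^t U(t-t')F\,dt'\|_{Z^s}\lesssim\|F\|_{\mathcal N^s}$ for the appropriate dual space $\mathcal N^s$ (including the $L^1_\tau$-endpoint version, which is soft once $\widehat\psi\in L^1$), the whole theorem reduces to the bilinear estimate
\[
\big\|\p_x(uv)\big\|_{\mathcal N^s} \ \lesssim\ T^{\te}\,\|u\|_{Z^s}\,\|v\|_{Z^s}
\]
for some $\te>0$ on $[0,T]$; the gain $T^{\te}$ turns $\Phi$ into a contraction on a small ball, and running the same estimate on differences gives uniqueness in $Z^s$ and Lipschitz dependence of the flow on $u_0$.

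For the bilinear estimate I would dyadically localize the two inputs in frequency ($|\x_j|\sim N_j$) and modulation ($|\tau_j-\phi(\x_j)|\sim L_j$) and the output in frequency ($|\x|\sim N\lesssim\max(N_1,N_2)$), and then exploit the resonance identity
\[
\max(L,L_1,L_2)\ \gtrsim\ \big|\phi(\x_1+\x_2)-\phi(\x_1)-\phi(\x_2)\big| \ =\ |\x_1\x_2(\x_1+\x_2)|\,\big|5(\x_1^2+\x_1\x_2+\x_2^2)+3\be\big|,
\]
which in the high-frequency regime is $\gtrsim |\x_1\x_2\x|\,\max(N_1,N_2,N)^2$ since $\x_1^2+\x_1\x_2+\x_2^2\gtrsim\max(\x_1^2,\x_2^2)$. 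This large modulation gain is exactly what must pay for the derivative $\p_x$ in the nonlinearity and for the negative power $\LR{\x}^{-2}$ lost at $s=-2$; the remaining $L^2$ convolution sums are then closed by Cauchy--Schwarz together with the $L^4_{t,x}$ Strichartz estimate and the improved bilinear Strichartz estimate for $U(t)$ (which, the dispersion being of order five, is at least as strong as for the Airy flow and gains more when the input frequencies are well separated).

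The main obstacle is the degenerate resonance: the high--high$\to$low interaction with output frequency $N\to 0$ (and, symmetrically, high--low$\to$high with the small input frequency $\to 0$), together with the region where $L,L_1,L_2$ are all comparably small. There the factor $|\x_1\x_2\x|\max(N_i)^2$ in the lower bound for $\max L_j$ degenerates as a power of the small frequency, and in plain $X^{-2,1/2}$ the resulting dyadic sum over that small frequency (or over the modulations) is only logarithmically divergent --- which both explains why the classical space does not reach $s=-2$ and pinpoints where the modified norm is needed: summing those shells in $\ell^1$ rather than $\ell^2$ turns the borderline-divergent sum into a convergent geometric one, while the (soft) linear estimates persist in this enlarged norm. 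Once the bilinear estimate is checked over all frequency/modulation configurations in $Z^s$, the contraction argument closes and Theorem~\ref{thm_LWP} follows; I expect the careful bookkeeping of these degenerate cases, rather than any single clever inequality, to be the bulk of the work.
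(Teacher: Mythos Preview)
Your overall fixed-point strategy is fine, but the choice of workspace is where the argument breaks. You propose $Z^s = X^{s,1/2}\cap Y^s$ with $Y^s$ the $\langle\xi\rangle^s L^2_\xi L^1_\tau$ norm, and you diagnose the high--high $\to$ low obstruction as a borderline \emph{logarithmic} divergence that the $\ell^1$ modulation summation will cure. That diagnosis is correct at $s=-7/4$ (this is exactly the Chen--Guo endpoint), but it is wrong for $-2\le s<-7/4$: the divergence there is polynomial, not logarithmic. Concretely, the counterexamples to the bilinear estimate give the \emph{necessary} conditions
\[
b \le \tfrac{4}{5}s+\tfrac{19}{10}\quad\text{in }D_3=\{|\xi|\le 1\},\qquad
b \le \tfrac{s}{2}+\tfrac{11}{8}\quad\text{in }D_2=\{|\tau-\phi(\xi)|\gtrsim |\xi|^5\},
\]
so at $s=-2$ one must have $b\le 3/10$ on $D_3$ and $b\le 3/8$ on $D_2$. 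No amount of $\ell^1$ summation over modulation shells can rescue $b=1/2$ there; the estimate fails by a fixed power, not a log.

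The missing idea is to let the modulation exponent $b$ depend on the region of $(\tau,\xi)$-space: keep $b=1/2$ near the characteristic surface $D_1$, but drop to $b=3/10$ (with a compensating frequency weight $\langle\xi\rangle^{s+1}$) on $D_2\cup D_3$. The paper's solution space is
\[
\|u\|_{Z^s}=\|P_{D_1}u\|_{X^{s,1/2}_{(2,1)}}+\|P_{D_2\cup D_3}u\|_{X^{s+1,3/10}_{(2,1)}}+\|u\|_{Y^s},
\]
and it is precisely this region-dependent lowering of $b$ that absorbs the polynomial loss in the high--high $\to$ low interaction and allows the bilinear estimate to close down to $s=-2$. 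Your resonance identity and the case splitting over $L,L_1,L_2$ are the right ingredients for the bookkeeping once the correct space is in place, but with the space you wrote the contraction will not close below $s=-7/4$.
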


On the other hand, when $s<-2$, we obtained ill-posedness in the following sense.

\begin{thm} \label{thm_ill}
Let $s <-2$. There exist $T>0$, $C_0>0$ and the sequence of initial data $\{ \phi_{N} \}_{N=1}^{\infty} \in H^{\infty} (\mathbb{R})$ 
satisfying the following conditions, for any $t \in (0,T]$, 
\begin{align*}
\text{(i)}~\| \phi_{N} \|_{H^s} \rightarrow 0 ~~\text{as} ~~ N  \rightarrow \infty, \hspace{0.5cm}
\text{(ii)} ~\| u_{N} (t) \|_{H^s} \geq C_0,
\end{align*}
where $u_N$ is the solution to (\ref{Ka}) with initial data $\phi_{N}$ obtained in Theorem~\ref{thm_LWP}. 
\end{thm}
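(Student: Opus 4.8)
The plan is to establish \emph{norm inflation}: we follow a single high‑frequency wave packet through a Duhamel expansion and extract the low‑frequency ($0$–mode) component created by its quadratic self‑interaction. The mechanism rests on the fact that an $H^s$–norm with $s<-2$ weights this low‑frequency component of order one, while the data are $H^s$–small; the threshold $s=-2$ enters because $2|s|$ equals the order of the group velocity $p'(N)\sim N^4$.

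Fix a real Schwartz bump $\psi$ and a small parameter $\e>0$ to be chosen, and set $\phi_N(x)=\la_N\cos(Nx)\,\psi(x)$ with $\la_N>0$ normalized so that $\|\phi_N\|_{H^{-2}}=\e$; then $\la_N\sim\e N^2$ and $\|\phi_N\|_{H^s}\sim\e N^{s+2}\to0$, which is (i), and $\phi_N\in H^\infty$. As the $H^{-2}$–norms are uniformly bounded, Theorem~\ref{thm_LWP} (used with $s=-2$) furnishes a common $T>0$ on which all solutions $u_N$ exist, with $\|u_1\|_{Y},\|u_N\|_{Y}\lec\e$ in the function space $Y$ underlying the contraction argument; by uniqueness and persistence of regularity these are the $H^\infty$–solutions of the statement, and they solve $u_N=e^{-tL}\phi_N-\ga\int_0^t e^{-(t-s)L}\p_x(u_N^2)\,ds$ with $L=\al\p_x^5+\be\p_x^3$ (Fourier symbol $ip(\x)$, $p(\x)=\al\x^5-\be\x^3$). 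Decompose $u_N=u_1+u_2+R$ with $u_1=e^{-tL}\phi_N$ and $u_2=-\ga\int_0^t e^{-(t-s)L}\p_x(u_1^2)\,ds$ the second iterate, so that $R=-\ga\int_0^t e^{-(t-s)L}\p_x\big((u_1+u_N)(u_2+R)\big)\,ds$. Then $\|u_1(t)\|_{H^s}=\|\phi_N\|_{H^s}\to0$ (unitarity of the free group on $H^s$); the bilinear estimate from the proof of Theorem~\ref{thm_LWP} gives $\|u_2\|_{Y}\lec\|u_1\|_{Y}^2\lec\e^2$ and then $\|R\|_{Y}\lec\e(\|u_2\|_{Y}+\|R\|_{Y})$, hence $\|R\|_{Y}\lec\e^3$ for $\e$ small; since $Y\hookrightarrow C([0,T];H^{-2})$ and $H^{-2}\hookrightarrow H^s$ for $s\le-2$, we obtain $\|R(t)\|_{H^s}\lec\e^3$.

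The main work is the lower bound $\|u_2(t)\|_{H^s}\gec\e^2$, uniform in $N$, for each fixed $t\in(0,T]$ and $N$ large. On the Fourier side,
\[
\F[u_2(t)](\x)=-i\ga\,\x\,e^{-itp(\x)}\int\widehat{\phi_N}(\x_1)\,\widehat{\phi_N}(\x-\x_1)\,\frac{e^{itH(\x_1,\x-\x_1)}-1}{iH(\x_1,\x-\x_1)}\,d\x_1,\qquad H(\x_1,\x_2)=p(\x_1+\x_2)-p(\x_1)-p(\x_2).
\]
For an output frequency $\x$ with $N^{-3}\lec|\x|\lec1$ the relevant region is $\x_1$ near $N$, $\x-\x_1$ near $-N$, where $H\approx -p'(N)\x$ is large but non‑degenerate and $\p_{\x_1}H\sim N^3\x$; hence the oscillatory part $e^{itH}/(iH)$ of the time integral is annihilated, up to negligible errors, by the $\x_1$–integration against the fixed bump $\widehat\psi$ (non‑stationary phase, the phase moving by $\gg1$ across its support), while the non‑oscillatory part $-1/(iH)$ survives. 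Together with the factor $\x$ from $\p_x$ this produces $\x\cdot H^{-1}\sim c_N^{-1}$, $c_N=p'(N)\sim N^4$ the group velocity, and the $\x_1$–integral collapses to $\la_N^2\,\widehat{\psi^2}(\x)$ up to constants; since the $H^{-2}$–normalization forces $\la_N^2\sim\e^2 N^4$, one gets $\la_N^2/c_N\sim\e^2$ and
\[
\F[u_2(t)](\x)=-c\,e^{-itp(\x)}\,\widehat{\psi^2}(\x)+(\mathrm{error}),\qquad |c|\sim\e^2,\qquad N^{-3}\lec|\x|\lec1.
\]
In configuration space $u_2(t)$ is, to leading order, the permanent rectified profile $\sim -c\,\psi^2$ deposited near the origin once the (dispersively spread) wave packet, travelling at speed $c_N$, has passed, plus pieces supported far from the origin that only add to $\|u_2(t)\|_{H^s}^2$. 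Controlling the error terms — the $e^{\pm2iNx}$–part of $u_2$, which is $O(\e^2 N^s)$; the range $|\x|\lec N^{-3}$, of measure $\lec N^{-3}$; and the $O(N^{-1})$ relative corrections in $H$ and $H^{-1}$ — and integrating over a fixed frequency band on which $\widehat{\psi^2}$ is bounded away from zero, one obtains $\|u_2(t)\|_{H^s}\ge\tfrac12\|c\,\psi^2\|_{H^s}\gec\e^2$ once $N$ is large.

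Combining the three estimates, for each $t\in(0,T]$ and all large $N$,
\[
\|u_N(t)\|_{H^s}\ge\|u_2(t)\|_{H^s}-\|u_1(t)\|_{H^s}-\|R(t)\|_{H^s}\ge C_1\e^2-o_N(1)-C_2\e^3\ge\tfrac14 C_1\e^2=:C_0>0,
\]
once $\e$ is fixed small enough that $C_2\e^3<\tfrac12 C_1\e^2$; this is (ii), and $T,C_0$ depend only on $\psi,\al,\be,\ga,\e$. The hardest point is the third paragraph: one must rigorously show that the surviving part of the Duhamel integral has size exactly of order $c_N^{-1}$ and suffers no further cancellation, which is the precise place where $s=-2$ is critical (it is what makes $\la_N^2/c_N$ independent of $N$). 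A subsidiary point, used above, is that $\|\phi_N\|_{H^{-2}}$ must be a small but fixed constant rather than a null sequence — it cannot tend to $0$, since $\|u_N(t)\|_{H^s}\le\|u_N(t)\|_{H^{-2}}\lec\|\phi_N\|_{H^{-2}}$ — and this is exactly what keeps $R$ perturbatively small.
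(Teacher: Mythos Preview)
The paper does not actually contain a proof of Theorem~\ref{thm_ill}; it is stated alongside Theorem~\ref{thm_LWP} and attributed to the companion paper \cite{TK} (the discussion after the statement refers the reader to \cite{TK} and to the ``counterexamples of (\ref{BE})'' in the appendix there). So there is no in-paper argument to compare against, and your proposal should be judged on its own.

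Your outline is the standard norm-inflation scheme and is essentially correct. The key mechanism --- a high--high $\to$ low interaction producing, after cancellation of the $\xi$ from $\p_x$ against the $\xi$ in the resonance $H\sim N^4\xi$, a permanent low-frequency profile of size $\la_N^2/N^4\sim\e^2$ --- is exactly the one that makes $s=-2$ critical, and your normalization $\|\phi_N\|_{H^{-2}}=\e$ is the right way to get a uniform lifespan from Theorem~\ref{thm_LWP} while keeping $R=O(\e^3)$ perturbative. Your computation $\p_{\xi_1}H\sim N^3\xi$ is correct (it comes from the factor $\xi_1+\xi_2=\xi$ in $p'(\xi_2)-p'(\xi_1)$), so on any fixed band $|\xi|\sim 1$ the nonstationary-phase step goes through for $N$ large at each fixed $t>0$. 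Two small points worth tightening in a full write-up: (a) make explicit that you evaluate the lower bound only on a fixed band such as $\tfrac12\le|\xi|\le 1$ where $\widehat{\psi^2}$ is bounded away from zero, so the cutoff $|\xi|\gec N^{-3}$ and the variable weight $\LR{\xi}^s$ play no role; (b) the replacement $H\to c\,N^4\xi$ inside $1/H$ has relative error $O(|\xi|/N)$ over the support of $\widehat\psi(\cdot-N)$, which is harmless on that band. The approach in \cite{TK} uses Fourier-side box data rather than your smooth $\psi$, but that is a cosmetic difference; your smooth choice makes the nonstationary-phase step cleaner at the cost of slightly less explicit constants.
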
 
For the proof of Theorem~\ref{thm_LWP}, see \cite{TK}. This proof is based on Bejenaru and Tao's work \cite{BT}. 
These theorems imply that the critical exponent $s$ is equal to $-2$. 
Next, we extend the local solution obtained above globally in time by the I-method. 
Yan and Li \cite{YL} proved GWP in $H^s$ for $s >-63/58$, following the argument of \cite{CoKe}. 
Chen and Guo \cite{CG} used the argument of \cite{CoKeSt} to show GWP in $H^s$ for $s \geq -7/4$.
 We apply the I-method under the weaker regularity condition on data to obtain 
the global solution of (\ref{Ka}) for $s \geq -38/21$. 
But the function space used in \cite{TK} is not applicable for the proof of the global existence. In this paper, 
we adjoint the function space for the global well-posedness and so we reproduce the proof of LWP in the 
adjusted function space (see Section 3). 
The main result in this paper is the following.  

\begin{thm} \label{thm_main}
Let $s \geq -38/21$. Then, for any $T>0$, there exists a unique solution to (\ref{Ka}) in the function space 
$W^{s} ([0,T])$ defined below. Moreover, the data-to-solution map, 
$H^s \ni u_0 \mapsto u \in W^{s}([0,T])$, is locally Lipschitz continuous and 
\begin{align*}
\sup_{-T \leq t \leq T} \| u (t) \|_{H^s} \leq C T^{7/5 (2s+5)} \| u_0 \|_{H^s},
\end{align*} 
for some constant $C>0$. 
\end{thm}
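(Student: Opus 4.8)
The plan is to run the I-method: combine the local theory of Theorem~\ref{thm_LWP} (in the modified space $W^s$ constructed in Section~3) with an almost conservation law for a smoothed $L^2$ energy, after a rescaling that makes the data small.

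\textbf{Step 1: Rescaling and the operator $I$.} For $\la\ge 1$ put $u_\la(t,x)=\la^{-4}u(\la^{-5}t,\la^{-1}x)$; if $u$ solves (\ref{Ka}) with $(\al,\be,\ga)=(-1,\be,1)$ then $u_\la$ solves it with $\be$ replaced by $\la^{-2}\be$, so the structure of the equation is preserved and the third-order coefficient only shrinks. Since $s>-7/2$ we have $\|u_{0,\la}\|_{H^s}\lec \la^{-(7+2s)/2}\|u_0\|_{H^s}\to0$ as $\la\to\I$, which will be used to absorb large data. Fix $N\gg1$ and let $m=m_N$ be the smooth radial symbol with $m(\x)=1$ for $|\x|\le N$ and $m(\x)=(|\x|/N)^{s}$ for $|\x|\ge 2N$; write $I=I_N$ for the associated Fourier multiplier. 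Then $\|v\|_{H^s}\lec \|Iv\|_{L^2}\lec N^{-s}\|v\|_{H^s}$, and $I$ maps $W^s$ into the endpoint space $W^0$; set $\|\cdot\|_{W^0_I}:=\|I\,\cdot\|_{W^0}$, the modified Bourgain norm in which we shall work.

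\textbf{Step 2: Modified local well-posedness.} First I would upgrade the bilinear estimate underlying Theorem~\ref{thm_LWP} to the $I$-regularized form $\|I\p_x(uv)\|_{N^0}\lec \|u\|_{W^0_I}\|v\|_{W^0_I}$, where $N^0$ is the companion ``nonlinear'' space. Away from the regime where two input frequencies are $\gtrsim N$ and cancel, $m(\x_1+\x_2)/(m(\x_1)m(\x_2))\lec1$ and the estimate reduces to the $s=-2$ bilinear estimate; in that regime the same quintic resonance identity $|(\x_1+\x_2)^5-\x_1^5-\x_2^5|\gec|\x_1\x_2(\x_1+\x_2)|(\x_1^2+\x_1\x_2+\x_2^2)$ that drives the $s=-2$ theory compensates the loss in the symbol ratio. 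Combined with the linear estimates in $W^0$, the contraction principle then produces, whenever $\|Iu_{0,\la}\|_{L^2}\le\eps_0$ for a small absolute $\eps_0$, a unique solution $u_\la$ of the rescaled equation on $[0,1]$ with $\|u_\la\|_{W^0_I([0,1])}\lec \|Iu_{0,\la}\|_{L^2}$, depending Lipschitz-continuously on the datum.

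\textbf{Step 3: Almost conservation law.} Since $\int \p_x(Iv)\,(Iv)^2\,dx=0$, applying $I$ to the equation and pairing with $Iu_\la$ gives
\[
\Big|\|Iu_\la(1)\|_{L^2}^2-\|Iu_\la(0)\|_{L^2}^2\Big|
\lec \Big|\int_0^1\!\!\!\int_{\R}\p_x(Iu_\la)\,\big[(Iu_\la)^2-I(u_\la^2)\big]\,dx\,dt\Big| .
\]
The core of the argument is the trilinear estimate
\[
\Big|\int_0^1\!\!\!\int_{\R}\p_x(Iw)\,\big[(Iw)^2-I(w^2)\big]\,dx\,dt\Big|\lec N^{-\ka}\,\|w\|_{W^0_I([0,1])}^3 ,\qquad \ka=\ka(s)>0 .
\]
The symbol $m(\x_1)m(\x_2)-m(\x_1+\x_2)$ forces at least two of $|\x_1|,|\x_2|,|\x_1+\x_2|$ to be $\gtrsim N$, a mean-value bound supplies an extra factor comparable to the smallest of the three frequencies over the largest, and the quintic resonance function (large precisely in the dangerous high$\times$high regime) furnishes strong modulation smoothing; summing over dyadic blocks in frequency and modulation in the $\ell^1$-structured space $W^0$ yields the power $N^{-\ka}$. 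If the bare energy $\|Iu_\la\|_{L^2}^2$ does not give a large enough $\ka$ down to $s=-38/21$, I would instead work with a corrected energy $\|Iu_\la\|_{L^2}^2+\Lambda_3(u_\la)$, choosing the trilinear correction $\Lambda_3$ to cancel the near-resonant part of the increment and thereby enlarge $\ka$.

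\textbf{Step 4: Iteration and conclusion.} Choosing $\la=\la(N,\|u_0\|_{H^s})$ so that $\|Iu_{0,\la}\|_{L^2}\le\eps_0$ (possible since $\|Iu_{0,\la}\|_{L^2}\lec \la^{-(7+2s)/2}N^{-s}\|u_0\|_{H^s}$ and $7+2s>0$), Step~2 yields $u_\la$ on $[0,1]$ and Step~3 gives $\|Iu_\la(1)\|_{L^2}^2\le \eps_0^2+CN^{-\ka}\eps_0^3$. Reapplying this on consecutive unit intervals, the energy stays $\le(2\eps_0)^2$ for $\gtrsim N^{\ka}$ steps, so $u_\la$ extends to $[0,N^{\ka}]$ with $\sup_t\|u_\la(t)\|_{H^s}\lec \sup_t\|Iu_\la(t)\|_{L^2}\lec \eps_0$. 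Undoing the scaling, $u$ exists on $[0,\la^{-5}N^{\ka}]$, and $\la^{-5}N^{\ka}\ge T$ holds once $N$ is taken a suitable power of $T$ and $\|u_0\|_{H^s}$ — which is possible exactly when $\ka>-10s/(7+2s)$; optimising over $N$ fixes the admissible range $s\ge -38/21$ and, returning to the original variables, gives $\sup_{-T\le t\le T}\|u(t)\|_{H^s}\le CT^{\frac75(2s+5)}\|u_0\|_{H^s}$ (time reversibility handles $t<0$). Gluing the rescaled local pieces produces the solution in $W^s([0,T])$, whose uniqueness and locally Lipschitz dependence are inherited from Step~2. The main obstacle is Step~3: extracting a power $N^{-\ka}$ with $\ka$ as large as the scaling bookkeeping demands (roughly $\ka>380/71$ at the endpoint) from the commutator term at a regularity as low as $s=-38/21$ — this is what necessitates both the $\ell^1$-modified Bourgain space and, very likely, the correction term in the energy.
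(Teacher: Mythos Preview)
Your overall architecture --- rescale, define $I$, prove modified LWP, establish an almost-conservation law, iterate --- matches the paper. The serious gap is in Step~3. The bare energy $E_I^{(2)}(u)=\|Iu\|_{L^2}^2$ with a trilinear commutator estimate is known (Yan--Li) to reach only $s>-63/58$, far from $-38/21$. You anticipate this and propose adding a single cubic correction $\Lambda_3$, i.e.\ working with $E_I^{(3)}$; but the paper needs \emph{two} corrections, passing to
\[
E_I^{(4)}(u)=E_I^{(2)}(u)+\Lambda_3(\sigma_3)+\Lambda_4(\sigma_4),
\]
with $\sigma_3=-M_3/(a_3+\beta\lambda^{-2}b_3)$ and $\sigma_4=-M_4/(a_4+\beta\lambda^{-2}b_4)$, so that $\frac{d}{dt}E_I^{(4)}=\Lambda_5(M_5)$ is five-linear. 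Only at this level does one obtain the decay $N^{5s}$ (hence $\kappa=-5s=190/21$ at the endpoint), which comfortably beats the threshold $\kappa>380/71$ that your own bookkeeping correctly identifies. With only $E_I^{(3)}$ the increment is $\Lambda_4(M_4)$, and the pointwise $M_4$ bound (Lemma~\ref{lem_M_4}) together with the available smoothing in $W^s$ does not yield a large enough $\kappa$ to close at $s=-38/21$.

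Two further ingredients you omit are essential. First, once you work with $E_I^{(4)}$ you must show the \emph{fixed-time difference}
\[
|E_I^{(4)}(u)(t_0)-E_I^{(2)}(u)(t_0)|\lesssim \|Iu(t_0)\|_{L^2}^3+\|Iu(t_0)\|_{L^2}^4
\]
(Proposition~\ref{prop_FTD}) in order to translate control of $E_I^{(4)}$ back into control of $\|Iu\|_{L^2}$ at each step; this requires a sharper argument than in \cite{CoKeSt} for $-2\le s<-7/4$. Second, the decay $N^{5s}$ in the $\Lambda_5(M_5)$ estimate hinges on the \emph{improved bilinear estimate} (Lemma~\ref{lem_BR}),
\[
\|u_{N_1}v_{N_2}\|_{L^2_{t,x}}\lesssim N_1^{-4b}N_2^{1/2-b}\|u_{N_1}\|_{X_{(2,1)}^{0,1/2}}\|v_{N_2}\|_{X_{(2,1)}^{0,b}}\qquad(N_1\gg N_2),
\]
which quantifies exactly how many derivatives one recovers when the low-frequency factor only lies in $X_{(2,1)}^{0,b}$ with $b=19/42<1/2$. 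The choice $b_1=19/42$ in the definition of $W^s$ is forced by balancing the necessary condition (\ref{co_1}) for the bilinear estimate against this $4b_1$ gain in the almost-conservation law; your sketch does not isolate this mechanism, which is precisely what pins down the endpoint $-38/21$.
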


 \vspace{0.3em}
 
 We now use the scaling argument. For $\lambda \geq 1$,  
\begin{align*}
u_{\lambda} (t,x):= \lambda^{-4} u(\lambda^{-5}t, \lambda^{-1} x), ~~~~
u_{0, \lambda}(x):=\lambda^{-4} u_0(\lambda^{-1} x). 
\end{align*}
If $u$ solves (\ref{Ka}), then $u_{\lambda}$ satisfies the following equation. 
\begin{align} \label{Ka2}
\begin{cases}
& \p_t u_{\lambda} - \p_x^5 u_{\lambda} + \beta  \lambda^{-2} \p_x^3 u_{\lambda} + \p_x (u_{\lambda}^2)=0, 
\hspace{0.3cm} (t,x) \in [0,\lambda^5 T] \times \mathbb{R}, \\
& u_{\lambda}(0, \cdot)= u_{0,\lambda}(\cdot) \in H^s (\mathbb{R}). 
\end{cases}
\end{align}
A direct calculation shows 
\begin{align*}
\| u_{0, \lambda} \|_{H^s} \leq \lambda^{-s-7/2} \| u_0 \|_{H^s}~~\text{for}~~ s<0.
\end{align*}
Therefore we can assume smallness of initial data when $s>-7/2$.
So we solves (\ref{Ka2}) for sufficiently small data. 

We first recall our local well-posedness result. 
The main idea is how to define the function space 
to construct local solutions. 
Here the Bourgian space $X^{s,b}$ plays an important role when $s$ is small. The Bourgain space 
$X^{s,b}$ is introduced by Bougain \cite{Bo} and equipped with the norm, 
\begin{align*}
\| u \|_{X^{s,b}}:= \| \langle \xi \rangle^s \langle \tau-p_{\lambda}(\xi) \rangle^{b} \widehat{u} \|_{L_{\tau,\xi}^2},
\end{align*}
where $p_{\lambda} (\xi)=\xi^5+ \beta \lambda^{-2} \xi^3$ and 
$\widehat{u}$ is the Fourier transform of $u$. 
The key is to establish the bilinear estimate of the nonlinearity $\p_x(u^2)$ as follows;
\begin{align} \label{BE}
\| \Lambda^{-1} \p_x (u v ) \|_{X^{s,b}} \leq C \| u \|_{X^{s,b}} \| v \|_{X^{s,b}},
\end{align}
where $\Lambda^b$ is the Fourier multiplier defined as 
$\Lambda^b:= \mathcal{F}_{\tau, \xi}^{-1} \langle \tau-p_{\lambda}(\xi) \rangle^b \mathcal{F}_{t, x}$ for $b \in \mathbb{R}$. 
Combining (\ref{BE}) and some linear estimates, the Fourier restriction norm method 
works to obtain LWP. Chen, Li, Miao and Wu \cite{CLMW} proved (\ref{BE}) in $X^{s,1/2+\varepsilon}$ with $0 < \varepsilon \ll 1$ when $s>-7/4$. 
This result was improved to $s=-7/4$ by Chen and Guo \cite{CG}. 
However, (\ref{BE}) fails for any $b \in \mathbb{R}$ when $s<-7/4$. 
To overcome this difficulty, we modify the Bourgain space $X^{s,b}$ to control strong nonlinear interactions and 
establish (\ref{BE}) for $s <-7/4$. The idea of the modification of $X^{s,b}$ was introduced by Bejenaru and Tao \cite{BT}. 
Remark that there is no general framework for modifying $X^{s,b}$. This is one of the most difficult points in our study. 
Following the similar argument to \cite{KT}, we obtained (\ref{BE}) in the critical case $s=-2$ in \cite{TK}. 
We now mention how to modify the Bourgain space. 
From the counterexamples of (\ref{BE}) (see appendix in \cite{TK}), we find the domain in which 
strong nonlinear interactions appear and make a suitable modification in this region. 
We first divide $\mathbb{R}^2$ into three parts as follows;
\begin{align*}
D_1:=& \bigl\{ (\tau,\xi) \in \mathbb{R}^2~;~ |\tau -p_{\lambda} (\xi)| \leq \frac{31}{32} |\xi|^5+ \frac{7}{8} \beta \lambda^{-2} |\xi|^3
~~\text{and}~~|\xi| \geq 1  \bigr\}, \\
D_2:=& \bigl\{ (\tau,\xi) \in \mathbb{R}^2~;~ |\tau -p_{\lambda} (\xi)| \geq \frac{31}{32} |\xi|^5+ \frac{7}{8} \beta \lambda^{-2} |\xi|^3
~~\text{and}~~|\xi| \geq 1  \bigr\}, \\
D_3:=& \bigl\{ (\tau,\xi) \in \mathbb{R}^2~;~|\xi| \leq 1 \bigr\}.
\end{align*} 
From the counterexamples of (\ref{BE}), the necessary conditions are 
\begin{align} \label{co_1}
& b \leq \frac{4}{5} s +\frac{19}{10} ~~\text{in}~~D_3, \\  
\label{co_2}
& b \leq \frac{s}{2} + \frac{11}{8} ~~\text{in}~~D_2.
\end{align}
Remark that these conditions only come from the $high \times high \rightarrow low$ interaction 
which means that a low frequency band is generated 
from an interaction between high and high frequencies bands. 
So the way to modify the Bourgain space is not strictly restricted. 
From (\ref{co_1}), we have to take $b \leq 3/10$ in $D_3$ when $s=-2$. 
Following the above argument, the function space $Z^s$ is equipped with the norm,
\begin{align*}
\| u \|_{Z^s}:= \| P_{D_1} u \|_{X_{(2,1)}^{s,1/2}}+ \| P_{D_2 \cup D_3} u \|_{X_{(2,1)}^{s+1,3/10}}
+ \| u \|_{Y^s},
\end{align*} 
where $P_{\Omega}$ is the Fourier projection onto a set $\Omega$ and 
$\| u \|_{Y^s} := \| \langle \xi \rangle^s \widehat{u} \|_{L_{\xi}^2 L_{\tau}^1 }$.  
Here $X_{(2,1)}^{s,b}$ is the Besov type Bourgain space defined by the norm,
\begin{align*}
\| u \|_{X_{(2,1)}^{s,b}}:=\Bigl\| \bigl\{ \| \chi_{A_j \cap B_k } \langle \xi \rangle^{s} \langle \tau-p_{\lambda}(\xi) \rangle^b \widehat{u} 
\|_{L_{\tau,\xi}^2} \bigr\}_{j.k \geq 0} 
 \Bigr\|_{l_j^2 l_k^1 },
\end{align*}
where $\chi_{\Omega}$ is the characteristic function of a set $\Omega$ and $A_j$, $B_k$ are dyadic decompositions as follows;
\begin{align*}
A_j:=& \bigl\{ (\tau,\xi) \in \mathbb{R}^2~;~2^j \leq \langle \xi \rangle < 2^{j+1} \bigr\}, \\
B_k:= & \bigl\{  (\tau,\xi) \in \mathbb{R}^2~;~2^k \leq \langle \tau-p_{\lambda}(\xi) \rangle < 2^{k+1} \bigr\}, 
\end{align*}
for $j,k \geq 0$. 
Using the function space $Z^s$, we obtain (\ref{BE}) for $s \geq -2$. 
Then the standard argument of the Fourier restriction norm method works to have LWP in $Z^s([0,T])$ for $s \geq -2$. 
Here $Z^s(I)$, for a time interval $I$, was defined by the norm, 
\begin{align*}
\| u \|_{Z^s(I)} := \inf \bigl\{ \| v \|_{Z^s}~;~u(t)=v(t), ~~\text{on}~~t \in I \bigr\}.
\end{align*}
For the details of the proof, see \cite{TK}. Note that the function space constructed in \cite{TK} is slightly 
different from $Z^s$ but essentially same as this one. 
 
\vspace{0.3em}

Next, we extend the local solution obtained above globally in time. 
But we have no conservation laws when $s$ is negative. 
To avoid this difficulty, we apply the I-method exploited by Colliander, Keel, Staffilani, Takaoka and Tao 
\cite{CoKe}, \cite{I02}. The main idea is to use a modified energy defined for less regular functions, which is not conserved. 
We now define the modified energy $E_I^{(2)} (u)$. The operator $I: H^s \rightarrow 
L^2$ is the Fourier multiplier satisfying $I= \mathcal{F}_{\xi}^{-1} m (\xi) \mathcal{F}_x$. Here $m(\xi) $ 
is a smooth and monotone function such that 
\begin{align*}
m( \xi )=
\begin{cases}
1 ~~& \text{for}~~ |\xi| \leq N, \\
|\xi|^s N^{-s}~~ & \text{for} ~~|\xi| \geq 2N,
\end{cases}
\end{align*}
for $s<0$ and large $N$. 
The functional $E_I^{(2)} (u)$ is defined as $E_I^{(2)} (u) (t):= \| I u(t) \|_{L^2}^2$. 
If we can control the growth of the modified energy $E_I^{(2)} (u)$ 
in time, this allows to iterate the local theory to continue the solution to any time $T$. 
In the I-method, the key estimate is the almost conservation law which implies that 
the increment of the modified energy is sufficiently small for small time interval and large $N$. 
Yan and Li \cite{YL} proved the almost conservation law for $E_I^{(2)} (u)$ and obtained 
GWP for $s >-63/58$, following \cite{CoKe}. Colliander, Keel, Staffilani, Takaoka and Tao \cite{CoKeSt} 
defined the new modified energy $E_I^{(4)} (u)$ by adding two suitable correction terms to $E_I^{(2)} (u)$ 
in order to remove some oscillations in that functional. They proved GWP for the KdV equation 
in $H^s$ when $s>-3/4$ by using this functional. Kishimoto \cite{Ki} slightly but essentially modified the Bourgain space 
to establish GWP for $s \geq -3/4$ 
(see also \cite{Gu}). 
Chen and Guo \cite{CG} defined the modified energy 
$E_I^{(4)} (u)$ for the Kawahara equation to obtain the almost conservation law and GWP for $s \geq -7/4$ 
in the following function space $\bar{F}^s$ introduced by Guo \cite{Gu}.
\begin{align*}
\| u \|_{\bar{F}^s}:=\| P_{\{ |\xi| \geq 1 \} } u \|_{X_{(2,1)}^{s,1/2}} + \| P_{\{ |\xi| \leq 1 \}} u \|_{L_x^2 L_t^{\infty}}.
\end{align*}  
Note that we obtain the same result as above if this function space is replaced by $X_{(2,1)}^{s,1/2}$. 
We encounter some difficulty to establish the almost conservation law because 
the Kawahara equation has less symmetries than the KdV equation. 
Our purpose is to establish both (\ref{BE}) and the almost conservation law when $s<-7/4$. 
In fact, the use of $X_{(2,1)}^{s,1/2} $ enables us to show that the almost conservation law for $E_I^{(4)} (u)$ holds
 for $s \geq -37/20$. 
However, (\ref{BE}) in $X_{(2,1)}^{s,1/2}$ breaks down for $s<-7/4$. 
Namely, we would not construct the local solution by the iteration argument. 
From the necessary condition (\ref{co_1}), we need to take $b_1 <1/2$ when $s < -7/4$ 
when the norm in $D_3$ is defined as $\| \cdot \|_{X_{(2,1)}^{s,b_1}}$. 
In the case $b_1 \geq 1/2$, we can recover two derivatives by the smoothing effects (see \cite{KPV91}). 
On the other hand, when $b_1 <1/2$, it is hard to establish the almost conservation law for $s<-7/4$ 
since the smoothing effect is weaker. Using the function space $Z^s$ defined above, we obtain (\ref{BE}) for 
$s \geq -2$ but the almost conservation law for $s \geq -6/5$ since $b_1=3/10$ in the function space $Z^s$. 
So we simultaneously need to control strong nonlinear interactions which come from (\ref{BE}) and 
the almost  conservation law. 
To overcome this difficulty, we establish the improved bilinear estimate 
which is sharp in some sense. 
Roughly speaking, this estimate implies that we can gain $4 b_1$ derivatives by using $X_{(2,1)}^{0,b_1}$. 
Following the improved bilinear estimate and the Sobolev inequality, we obtain the almost 
conservation law for $s \geq -4b_1$. From this and the necessary condition (\ref{co_1}), the minimum of $s$ is 
$-38/21$ with $b_1=19/42$. Following the above argument, we define the function space $W^s$ as follows;
\begin{align*}
\| u \|_{W^s}:= & \| P_{D_1} u \|_{X_{(2,1)}^{s,1/2}} + \| P_{D_2} u \|_{X_{(2,1)}^{s+s_2,b_2}}  \\
& \hspace{0.8cm} +\| P_{D_3} u \|_{X_{(2,1)}^{s,19/42}}+ \| u \|_{Y^s},
\end{align*}
where $s_2=25/168$ and $b_2=79/168$ which come from the necessary condition (\ref{co_2}). 
By using the function space $W^s$, we obtain both (\ref{BE}) and the almost conservation law for $-38/21 \leq s <0$. 
Remark we need another approach to prove the bilinear estimate because the function space used in \cite{TK} is different from 
the adjusted function space $W^s$ for the global well-posedness. 
Then we give the proof of the bilinear estimate in $W^s$, following Bejenaru and Tao \cite{BT}. 
For the details, see Proposition~\ref{prop_BE1} in Section 3. 
Moreover, we note that the difference between the almost conserved quantity $E_I^{(4)} (u)$ and the original modified energy $E_I^{(2)} (u)$ 
can be controlled by $E_I^{(2)} (u)$ when the time is fixed. For the details, see Proposition~\ref{prop_FTD} in section 4. 
From these estimates, the I-method is applicable to the modified Bourgain space $W^s$ so that we establish GWP for $s \geq -38/21$.

Remark, in the bilinear estimate for the Kawahara equation, 
we control only one nonlinear interaction. On the other hand, in the case of the KdV equation, 
we need to control three type nonlinear interactions at once. 
Therefore the way to modify the Bourgain space is strongly restricted. 
So it is difficult to apply the I-method to the modified Bourgain space for the KdV equation.

\vspace{0.5em}

We use the following notations throughout the present paper. $A \lesssim B$ means $A \leq C B$ for a positive constant $C$ and 
$A \sim B$ denotes both $A \lesssim B$ and $B \lesssim A$. 
Moreover $c+$ means $c+\varepsilon$, while $c-$ means $c-\varepsilon$, where $\varepsilon >0$ 
is enough small. The rest of this paper is planned as follows. In Section 2, we prepare the lemmas to prove 
the main results. In Section 3, we give the proof of the bilinear estimate and show LWP by the Fourier restriction norm method. 
In Section 4, we apply the I-method to modified Bougain space to show GWP. 

\vspace{0.5em}

\noindent
{\bf Acknowledgment.} The author would like to appreciate his supervisor Professor Yoshio Tsutsumi for 
many helpful conversation and encouragement and thank Professor Kotaro Tsugawa and Professor Nobu 
Kishimoto for helpful comments.

\section{Preliminaries}

In this section, we prepare important lemmas to show the main estimates. 
When we use the variables $(\tau,\xi)$, $(\tau_1,\xi_1)$ and $(\tau_2, \xi_2)$, 
we always assume the relation as follows. 
\begin{align*}
(\tau,\xi)=(\tau_1,\xi_1)+ (\tau_2,\xi_2).
\end{align*} 
For a normed space $\mathcal{X}$ and a set $\Omega$, $\| \cdot \|_{\mathcal{X} (\Omega)} $ denotes 
$\| f \|_{\mathcal{X} (\Omega) }:= \| \chi_{\Omega} f \|_{\mathcal{X}}$ where $\chi_{\Omega}$ is the 
characteristic function of $\Omega$. 
For dyadic numbers $N, M \geq 1$, $A_N$ and $B_M$ are dyadic decompositions defined as 
\begin{align*}
A_N:= \bigl\{ (\tau,\xi) ~; ~ N/2 \leq \langle \xi \rangle \leq  2N \bigr\},
\hspace{0.3cm} 
B_M:= \bigl\{ (\tau,\xi) ~ ;~M/2 \leq \langle \tau-p_{\lambda} (\xi) \rangle \leq 2M \bigr\},
\end{align*}
for dyadic numbers $N,M \geq 1$. 

The next two lemmas play a crucial role to establish the bilinear estimate. 
\begin{lem} \label{lem_L_4_1}
Suppose that each $\widehat{u}$ and $\widehat{v}$ is restricted to $A_N$ for a dyadic number $N \geq 1$. 
If $b+b' \geq 7/8$ and $b, b' >3/8$, then 
\begin{align} \label{es_L_4_11}
\| |\xi|^{3/4} \widehat{u} * \widehat{v} \|_{L_{\tau,\xi}^2(|\xi| \geq 1)} 
\lesssim \| u \|_{X_{(2,1)}^{0,b}} \| v \|_{X_{(2,1)}^{0,b'}}.
\end{align}  
Moreover, 
\begin{align*}
K= \inf \bigl\{ |\xi_1-\xi_2| ~; ~\tau_1, \tau_2~ \text{s.t}~ (\tau_1,\xi_1) \in \text{supp}~\widehat{u},~ (\tau_2,\xi_2) \in 
\text{supp}~\widehat{v}   \bigr\}>0,
\end{align*}
then we have
\begin{align} \label{es_L_4_12} 
\| |\xi|^{1/2} \widehat{u}* \widehat{v} \|_{L_{\tau,\xi}^2} 
\lesssim K^{-3/2} \| u \|_{X_{(2,1)}^{0,1/2}} \| v \|_{X_{(2,1)}^{0,1/2}}.
\end{align}
\end{lem}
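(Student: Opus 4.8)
The plan is to prove both bounds by the standard bilinear $L^2$ method, whose engine is the algebra of the resonance function. With the convention $\xi=\xi_1+\xi_2$, set $\sigma=\tau-p_\lambda(\xi)$, $\sigma_i=\tau_i-p_\lambda(\xi_i)$ and $H(\xi_1,\xi_2)=p_\lambda(\xi)-p_\lambda(\xi_1)-p_\lambda(\xi_2)$, so that $\sigma-\sigma_1-\sigma_2=-H$ and hence $\max(\langle\sigma\rangle,\langle\sigma_1\rangle,\langle\sigma_2\rangle)\gtrsim\langle H\rangle$. Since $p_\lambda(\xi)=\xi^5+\beta\lambda^{-2}\xi^3$, one computes
\[
H=\xi\,\xi_1\,\xi_2\bigl(5(\xi_1^2+\xi_1\xi_2+\xi_2^2)+3\beta\lambda^{-2}\bigr),\qquad
\partial_{\xi_1}H=(\xi_2-\xi_1)\,\xi\,\bigl(5(\xi_1^2+\xi_2^2)+3\beta\lambda^{-2}\bigr),
\]
the derivative being taken with $\xi=\xi_1+\xi_2$ held fixed. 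Because $\lambda\ge1$ and $|\beta|\le1$, the cubic brackets are comparable to $\langle\xi_1\rangle^2+\langle\xi_2\rangle^2$ unless all frequencies are $O(1)$; in particular, on $A_N$ with $N\gg1$ we have $\langle H\rangle\gtrsim|\xi|N^4$ and $|\partial_{\xi_1}H|\gtrsim|\xi_1-\xi_2|\,|\xi|\,N^2$. The other ingredient is the elementary counting inequality: for fixed $(\tau,\xi)$, the level set $\{\xi_1\in A_N:\,|p_\lambda(\xi_1)+p_\lambda(\xi-\xi_1)-\tau|\le A\}$ has measure $\lesssim\min\bigl(N,(A/(|\xi|N^2))^{1/2}\bigr)$, the square root reflecting the quadratic vanishing of $\xi_1\mapsto p_\lambda(\xi_1)+p_\lambda(\xi-\xi_1)$ at the diagonal $\xi_1=\xi/2$; without the $A_N$ restriction it reads $\lesssim A/\inf|\partial_{\xi_1}H|$ on each of the $O(1)$ monotonicity intervals. (The remaining case $N\lesssim1$ of \eqref{es_L_4_11}, where the brackets may degenerate, is compact and falls to an $L^4$-type Strichartz bound.)

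For \eqref{es_L_4_11}: by duality it suffices to bound $\iint|\xi|^{3/4}\,\overline g\,(\widehat u*\widehat v)$ for $\|g\|_{L^2}=1$. Decompose $u,v$ dyadically in modulation, writing $\|\widehat u_{L_1}\|_{L^2}\sim L_1^{-b}d_{L_1}$ with $\sum_{L_1}d_{L_1}\sim\|u\|_{X_{(2,1)}^{0,b}}$ and similarly $e_{L_2}$ for $v$. Cauchy--Schwarz in $(\tau_1,\xi_1)$ bounds the $(L_1,L_2)$-piece by $\bigl(\sup_{|\xi|\ge1}|\xi|^{3/2}|E_{L_1,L_2}(\tau,\xi)|\bigr)^{1/2}\|\widehat u_{L_1}\|_{L^2}\|\widehat v_{L_2}\|_{L^2}$, where $E_{L_1,L_2}(\tau,\xi)$ collects the $(\tau_1,\xi_1)$ compatible with the two modulation localizations and with $\xi_1,\xi-\xi_1\in A_N$; integrating out $\tau_1$ costs $\min(L_1,L_2)$ and the residual $\xi_1$-measure is handled by the counting inequality with $A\sim\max(L_1,L_2)$. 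One then splits according to which of $\langle\sigma\rangle,\langle\sigma_1\rangle,\langle\sigma_2\rangle$ is largest: if $\langle\sigma\rangle$ dominates, then $\langle\sigma\rangle\sim\langle H\rangle\sim|\xi|N^4$ pins $|\xi|$ and forces the $g$-pieces onto disjoint frequency annuli, allowing one further Cauchy--Schwarz in the $\sigma$-block; if an input modulation dominates, then $\max(L_1,L_2)\gtrsim|\xi|N^4$, hence $N\lesssim\max(L_1,L_2)^{1/4}$. Using $1\le|\xi|\lesssim N$ to absorb the powers of $N$, in every regime the resulting constant carries a net negative power of $\max(L_1,L_2)$, so the $\ell^1$ sum over $L_1,L_2$ converges — precisely under $b,b'>3/8$ and $b+b'\ge7/8$ — and together with $\sum d_{L_1}=\sum e_{L_2}=1$ this closes the estimate.

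For \eqref{es_L_4_12} the scheme is the same but easier: the hypothesis $|\xi_1-\xi_2|\ge K$ keeps us uniformly off the diagonal, so on the supports $|\partial_{\xi_1}H|\gtrsim K\,|\xi|\,(K^2+\xi^2)\gtrsim K^3|\xi|$ (for $K$ bounded below, the only nontrivial range since $K^{-3/2}\gtrsim1$ otherwise), and the counting inequality upgrades to $\lesssim A/(K^3|\xi|)$. With $b=b'=\tfrac12$ this yields $|\xi|\,|E_{L_1,L_2}|\lesssim L_1L_2K^{-3}$, hence $\||\xi|^{1/2}\,\widehat u_{L_1}*\widehat v_{L_2}\|_{L^2}\lesssim K^{-3/2}L_1^{1/2}L_2^{1/2}\|\widehat u_{L_1}\|_{L^2}\|\widehat v_{L_2}\|_{L^2}$; summing in $L_1,L_2$ by the triangle inequality and using $\sum_{L_1}L_1^{1/2}\|\widehat u_{L_1}\|_{L^2}\sim\|u\|_{X_{(2,1)}^{0,1/2}}$ gives \eqref{es_L_4_12}. (When $K\lesssim1$ the cubic bracket need not be positive, but then $K^{-3/2}\gtrsim1$ and one falls back on the nondegenerate second-derivative estimate at the diagonal, or on crude $L^4$ bounds.)

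The genuinely delicate point is the near-resonant, high$\times$high$\to$low regime of \eqref{es_L_4_11}, where $|\xi|$ and $|\xi_1-\xi_2|$ are both small compared with $N$: there $\partial_{\xi_1}H$ degenerates, the naive bound $A/|\partial_{\xi_1}H|$ is useless, and one is forced onto the square-root branch of the counting inequality — exactly the source of the sharp gain $|\xi|^{3/4}$ and of the precise thresholds on $b,b'$. Arranging that the three regimes (output-high, input-high, near-diagonal) close simultaneously against the slim margin $b+b'\ge7/8$ is the only place requiring care; the rest is bookkeeping.
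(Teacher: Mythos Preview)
The paper does not actually prove this lemma in the text you were given: immediately after stating Lemmas~\ref{lem_L_4_1} and \ref{lem_L_4_2} it writes ``For the proofs of these lemmas, see \cite{TK1}.'' So there is no in-paper argument to compare against beyond the pointer to the companion paper.

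That said, your sketch is the standard route for bilinear Strichartz/$L^4$ estimates of this type and is presumably what \cite{TK1} carries out: compute the resonance function $H=p_\lambda(\xi)-p_\lambda(\xi_1)-p_\lambda(\xi_2)$ and its $\xi_1$-derivative with $\xi$ fixed, dyadically decompose the inputs in modulation, reduce by Cauchy--Schwarz to a measure bound on the interaction set, and split according to which of $\langle\sigma\rangle,\langle\sigma_1\rangle,\langle\sigma_2\rangle$ dominates. Your identification of the near-diagonal degeneracy $\xi_1\approx\xi_2$ (forcing the square-root branch of the counting lemma) as the source of the sharp weight $|\xi|^{3/4}$ and of the thresholds $b,b'>3/8$, $b+b'\ge 7/8$ is correct, and the orthogonality-in-$|\xi|$ trick when $\langle\sigma\rangle$ is maximal (since $\langle\sigma\rangle\sim|\xi|N^4$ pins $|\xi|$) is exactly what is needed to sum the output-modulation blocks against a bare $L^2$ dual function. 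For \eqref{es_L_4_12} your observation that $\xi_1^2+\xi_2^2=\tfrac12\bigl((\xi_1-\xi_2)^2+\xi^2\bigr)\ge\tfrac12 K^2$ gives $|\partial_{\xi_1}H|\gtrsim K^3|\xi|$ uniformly off the diagonal, whence the clean $K^{-3/2}$ with $b=b'=1/2$, is the right mechanism.

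Two places where a referee would ask you to write more: (i) the endpoint $b+b'=7/8$ with $b'\le 1/2$, where the inner sum $\sum_{L_2\le L_1}L_2^{1/2-b'}$ is only $O(L_1^{1/2-b'})$ and one must check that the combined exponent $3/4-b-b'$ is nonpositive (it is, barely) and handle the borderline logarithm; (ii) the output-maximal case, where the phrase ``one further Cauchy--Schwarz in the $\sigma$-block'' hides the actual almost-orthogonality argument (different dyadic $L_0$'s live on disjoint $|\xi|$-annuli, so the $\ell^2$ sum in $L_0$ reconstitutes $\|g\|_{L^2}$). Neither is a gap, but both deserve a line of arithmetic rather than a parenthetical.
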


\begin{lem} \label{lem_L_4_2}
Assume that $\widehat{v}$ is supported on $A_N$ for $N \geq 1$ and $\widehat{u}$ is an arbitrary test function. 
If $b+b' \geq 7/8$ and $b,b' >3/8$, then 
\begin{align} \label{es_L_4_21}
\| P_{ \{ \langle \tau-p_{\lambda} (\xi) \rangle \sim M_0 \}} u v \|_{L_{t,x}^2}
\lesssim M_0^{b} \| |\xi|^{-3/4} \widehat{u} \|_{L_{\tau,\xi}^2 (|\xi| \geq 1)}
\| v \|_{X_{(2,1)}^{0,b'}}.
\end{align}
Moreover, there exists a non-empty set $\Omega \subset \mathbb{R}^2$ such that
\begin{align*}
K= \inf \bigl\{ |\xi+\xi_2|~; ~\tau, \tau_2 ~\text{s.t.} ~(\tau, \xi) \in \Omega,~(\tau_2,\xi_2) \in 
\text{supp}~ \widehat{v} \bigr\}>0,
\end{align*}
then we have 
\begin{align} \label{es_L_4_22}
\| P_{ \{ \langle \tau-p_{\lambda} (\xi) \rangle \sim M_0 \}} u v \|_{L_{t,x}^2}
\lesssim K^{-3/2}~ M_0^{1/2} \| |\xi|^{-1/2} \widehat{u} \|_{L_{\tau,\xi}^2} 
\| v \|_{X_{(2,1)}^{0,1/2}}. 
\end{align}
\end{lem}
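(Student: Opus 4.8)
The plan is to derive both halves of the lemma by dualizing, in $L^2_{t,x}$, the convolution estimates of Lemma~\ref{lem_L_4_1}. By duality, $\|P_{\{\langle\tau-p_\lambda(\xi)\rangle\sim M_0\}}uv\|_{L^2_{t,x}}$ equals the supremum of $|\langle uv,g\rangle_{t,x}|$ over all $g$ with $\|g\|_{L^2_{t,x}}\le1$ and $\widehat g$ supported in $\{\langle\tau-p_\lambda(\xi)\rangle\sim M_0\}$. For such a $g$, Plancherel and the convolution theorem give
\[
\langle uv,g\rangle_{t,x}=\langle u,\bar v g\rangle_{t,x}=\langle\widehat u,\widehat{\bar v}*\widehat g\rangle_{\tau,\xi}.
\]
The right-hand side of \eqref{es_L_4_21} only sees $\widehat u$ on $|\xi|\ge1$, so we may assume $\widehat u$ supported there; Cauchy--Schwarz with the dual weights $|\xi|^{-3/4}$ and $|\xi|^{3/4}$ then yields
\[
\bigl|\langle\widehat u,\widehat{\bar v}*\widehat g\rangle\bigr|\le\bigl\||\xi|^{-3/4}\widehat u\bigr\|_{L^2_{\tau,\xi}(|\xi|\ge1)}\,\bigl\||\xi|^{3/4}(\widehat{\bar v}*\widehat g)\bigr\|_{L^2_{\tau,\xi}(|\xi|\ge1)}.
\]

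For the second factor I apply \eqref{es_L_4_11} to the pair $(g,\bar v)$, assigning the $b$-exponent to $g$ and the $b'$-exponent to $\bar v$, so that the conditions $b+b'\ge7/8$ and $b,b'>3/8$ are exactly the hypotheses at hand. This is legitimate since, first, $\|\bar v\|_{X^{s,b'}_{(2,1)}}=\|v\|_{X^{s,b'}_{(2,1)}}$, because $\langle\xi\rangle$ and $\langle\tau-p_\lambda(\xi)\rangle$ are invariant under $(\tau,\xi)\mapsto(-\tau,-\xi)$ ($p_\lambda$ being odd), so the Besov blocks $A_j,B_k$ are preserved; second, $\widehat v$ occupies the single block $A_N$, so no $\ell^2_j$-summation interferes on the $v$-side; and third, the frequency-localization hypothesis of Lemma~\ref{lem_L_4_1} is met after a Littlewood--Paley decomposition of $g$, of which only the pieces that can interact with $A_N$ to land on $|\xi|\ge1$ are relevant. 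The outcome is a bound by $\|v\|_{X^{0,b'}_{(2,1)}}\|g\|_{X^{0,b}_{(2,1)}}$. Since $\widehat g$ sits in one dyadic modulation shell, the $\ell^1_k$-part of the $X^{\cdot,b}_{(2,1)}$-norm is trivial and the modulation weight is $\sim M_0^{b}$, so $\|g\|_{X^{0,b}_{(2,1)}}\sim M_0^{b}\|g\|_{L^2}\le M_0^{b}$. Collecting the three factors and taking the supremum over $g$ gives \eqref{es_L_4_21}.

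The estimate \eqref{es_L_4_22} follows by the identical scheme with \eqref{es_L_4_12} in place of \eqref{es_L_4_11} and $b=b'=\tfrac12$: the dual weights become $|\xi|^{-1/2}$ and $|\xi|^{1/2}$, the modulation shell of $g$ contributes $M_0^{1/2}$, and the factor $K^{-3/2}$ is inherited directly from \eqref{es_L_4_12}. The point requiring care is the separation hypothesis of \eqref{es_L_4_12}, which asks the frequency supports of the two factors in $\widehat{\bar v}*\widehat g$ to be separated. Here $\widehat{\bar v}$ is carried by $-\operatorname{supp}\widehat v$, while the frequencies of $\widehat g$ that feed an output lying in the $\xi$-projection of $\Omega$ are of the form $\xi+\xi_2$ with $(\tau,\xi)\in\Omega$ and $(\tau_2,\xi_2)\in\operatorname{supp}\widehat v$; thus the hypothesis $|\xi+\xi_2|\ge K$ is precisely what confines $\widehat g$ away from the zero frequency and, together with the dyadic localizations, furnishes the separation needed for \eqref{es_L_4_12} to run with the constant $K$.

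The step I expect to be the main obstacle is making the appeal to Lemma~\ref{lem_L_4_1} rigorous: that lemma is stated with both inputs localized to a common frequency block, whereas $g$ here is frequency-unlocalized, so one must perform the Littlewood--Paley decomposition of $g$ with care and check that the off-diagonal pieces are summable (using that, in the relevant interaction, $\widehat{uv}$ is essentially supported in $A_N$); and, for \eqref{es_L_4_22}, one must verify that each difference of frequencies that actually occurs is $\gtrsim K$, which uses the concrete placement of $\Omega$ and $\operatorname{supp}\widehat v$ rather than merely the nominal hypothesis. Everything else is Plancherel, Cauchy--Schwarz, and the elementary observation that a function supported in a single dyadic modulation shell has $X^{s,b}_{(2,1)}$-norm comparable to $M_0^{b}$ times its $L^2$-norm.
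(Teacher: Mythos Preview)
The paper does not actually prove Lemma~\ref{lem_L_4_2}: immediately after the statement of the two lemmas it writes ``For the proofs of these lemmas, see \cite{TK1}.'' So there is no in-document argument to compare yours against. Your duality scheme is the standard way one passes from a bilinear estimate of the type \eqref{es_L_4_11}--\eqref{es_L_4_12} to its ``swapped'' counterpart \eqref{es_L_4_21}--\eqref{es_L_4_22}, and the skeleton you wrote---Plancherel, then Cauchy--Schwarz with weights $|\xi|^{\mp 3/4}$ (resp.\ $|\xi|^{\mp 1/2}$), then the bilinear bound on $\widehat{\bar v}*\widehat g$, then the observation that $g$ in a single modulation shell has $X^{0,b}_{(2,1)}$-norm $\sim M_0^b\|g\|_{L^2}$---is correct. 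Your check that $\|\bar v\|_{X^{0,b'}_{(2,1)}}=\|v\|_{X^{0,b'}_{(2,1)}}$ via the oddness of $p_\lambda$ is also right, and the identification of the separation parameter $K$ in the dual picture is the right one.

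The one step that is not yet a proof is exactly the one you single out. If you invoke Lemma~\ref{lem_L_4_1} as a black box after decomposing $g=\sum_{N'}g_{N'}$, you are left with an $\ell^1$-sum $\sum_{N'}\|g_{N'}\|_{L^2}$ against only $\ell^2$ control; in the interaction where the output frequency is $\sim N$ and $N'\ll N$, infinitely many $N'$ contribute and the sum costs a logarithm. The cure is not to try to sum but to open up the proof of Lemma~\ref{lem_L_4_1}: that proof is a Cauchy--Schwarz on the convolution together with a measure bound on sublevel sets of the resonance function $p_\lambda(\xi)-p_\lambda(\xi_1)-p_\lambda(\xi-\xi_1)$, and this measure bound is symmetric in the three frequency slots. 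Running the same computation with the roles of ``input $u$'' and ``output'' permuted gives \eqref{es_L_4_21}--\eqref{es_L_4_22} directly, without any dyadic decomposition of $g$. In other words, your duality is morally a restatement of the symmetry of the underlying resonance estimate, and making that explicit is what closes the gap; this is presumably what the cited paper \cite{TK1} does.
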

For the proofs of these lemmas, see \cite{TK1}.

We put a one parameter semigroup $U_{\lambda} (t)$ defined by 
\begin{align*}
U_{\lambda} (t) := \mathcal{F}_{\xi}^{-1} \exp (i p_{\lambda} (\xi) t) \mathcal{F}_x.
\end{align*}
From the definition, $W^s([0,T])$ has the following property. 
\begin{align*}
X^{s,1/2+} ([0,T]) \hookrightarrow W^s ([0,T]) \hookrightarrow C([0,T]; H^s),
\end{align*}
which implies the following linear estimates. 

\begin{prop} \label{prop_linear1}
Let $s \in \mathbb{R}$, $\lambda \geq 1$ and $T>0 $. Then we have
\begin{align*}
\| U_{\lambda} (t) u_0 \|_{W^s ([0,T])} \lesssim \| u_0 \|_{H^s}. 
\end{align*}
\end{prop}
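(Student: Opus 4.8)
The plan is to reduce the estimate to the classical homogeneous linear bound in the Bourgain space $X^{s,1/2+}$ together with the embedding $X^{s,1/2+}([0,T])\hookrightarrow W^s([0,T])$ recorded just above. The $\lambda$-uniformity of the constant, which is what matters for the global iteration, will come for free, since everything is phrased through the variable $\sigma:=\tau-p_\lambda(\xi)$: the whole construction (semigroup, modulation weight, regions $D_i$) is invariant under the relevant shift, so $\lambda$ never appears explicitly.

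First I would verify the embedding, since this is the only place where the particular exponents $s_2=25/168$, $b_2=79/168$ and $19/42$ enter. Fix a small $\varepsilon>0$. On $D_1$ and $D_3$ the $W^s$-weights are $\langle\xi\rangle^s\langle\sigma\rangle^{1/2}$ and $\langle\xi\rangle^s\langle\sigma\rangle^{19/42}$, and since $\tfrac12,\tfrac{19}{42}<\tfrac12+\varepsilon$, one converts the $\ell^1$ summation over the modulation blocks $B_M$ into an $\ell^2$ summation by Cauchy--Schwarz, at the cost of a convergent geometric factor; combined with the bounded overlap of the $A_N$, this bounds the $D_1$- and $D_3$-pieces of $\|u\|_{W^s}$ by $\|u\|_{X^{s,1/2+\varepsilon}}$. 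On $D_2$ the weight is $\langle\xi\rangle^{s+s_2}\langle\sigma\rangle^{b_2}$, and here I would use that on $D_2$ one has $|\sigma|\ge\tfrac{31}{32}|\xi|^5+\tfrac78\beta\lambda^{-2}|\xi|^3\gtrsim|\xi|^5$, uniformly in $\lambda\ge1$ and in $\beta\in\{-1,0,1\}$ (for $|\xi|\ge1$ the bracket $\tfrac{31}{32}|\xi|^2-\tfrac78$ is $\ge\tfrac{3}{32}$ in the worst case $\beta\lambda^{-2}=-1$). Hence $\langle\xi\rangle^{s_2}\lesssim\langle\sigma\rangle^{s_2/5}$ on $D_2$, and since $b_2+s_2/5=\tfrac{79}{168}+\tfrac{5}{168}=\tfrac12$, the $D_2$-weight is $\lesssim\langle\xi\rangle^s\langle\sigma\rangle^{1/2}$ there; the reduction of the previous step then applies. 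Finally, for the $Y^s$-part, Cauchy--Schwarz in $\tau$ against $\langle\sigma\rangle^{-1/2-\varepsilon}$ gives $\|u\|_{Y^s}\lesssim\|u\|_{X^{s,1/2+\varepsilon}}$. Passing to infima over extensions yields the embedding at the level of the restriction spaces.

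Second I would prove $\|U_\lambda(t)u_0\|_{X^{s,1/2+}([0,T])}\lesssim\|u_0\|_{H^s}$. Choose $\psi\in C_0^\infty(\R)$ with $\psi\equiv1$ on $[0,T]$; then $\psi(t)U_\lambda(t)u_0$ is an admissible extension and has space--time Fourier transform $\widehat\psi(\sigma)\,\widehat{u_0}(\xi)$, so after the change of variables $\tau\mapsto\sigma$ at fixed $\xi$ the weight factorizes:
\begin{equation*}
\bigl\|\langle\xi\rangle^s\langle\sigma\rangle^{1/2+\varepsilon}\widehat{\psi U_\lambda(t)u_0}\bigr\|_{L^2_{\tau,\xi}}=\bigl\|\langle\sigma\rangle^{1/2+\varepsilon}\widehat\psi\bigr\|_{L^2_\sigma}\,\bigl\|\langle\xi\rangle^s\widehat{u_0}\bigr\|_{L^2_\xi},
\end{equation*}
and $\|\langle\sigma\rangle^{1/2+\varepsilon}\widehat\psi\|_{L^2_\sigma}<\infty$ because $\psi$ is Schwartz. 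Combining this with the embedding proves the proposition, with a constant depending on $s$ and on $T$ (through $\psi$) but independent of $\lambda\ge1$.

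The only delicate point is the $D_2$ case of the embedding: one must make sure the lower bound $|\sigma|\gtrsim|\xi|^5$ on $D_2$ is genuinely uniform in $\lambda\ge1$ and in the sign of $\beta$, and that the bookkeeping identity $b_2+s_2/5=\tfrac12$ leaves no deficit, i.e. the $s_2$ derivatives that the $D_2$-norm pays for are recovered exactly from the large modulation. Everything else is the standard cutoff-and-factorize computation for a free evolution in a Bourgain space, together with routine $\ell^2_j\ell^1_k$ bookkeeping for the Besov-type structure.
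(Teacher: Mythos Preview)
Your proposal is correct and follows exactly the route the paper indicates: the paper records the embedding $X^{s,1/2+}([0,T])\hookrightarrow W^s([0,T])$ immediately before the proposition, notes that it ``implies the following linear estimates,'' and then defers the details to \cite{BT}. You have simply spelled out these details---in particular the $D_2$ computation $b_2+s_2/5=\tfrac12$ and the standard cutoff argument for the free evolution---so there is nothing to add.
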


\begin{prop} \label{prop_linear2}
Let $s \in \mathbb{R}$, $\lambda \geq 1$ and $T>0$. Then  we have, for any $t \in [0,T]$,
\begin{align*}
\| \int_0^t U_{\lambda} (t-s) F(s) ds \|_{W^s ([0,T])} \lesssim \| \Lambda^{-1} F \|_{W^s ([0,T])}. 
\end{align*}
\end{prop}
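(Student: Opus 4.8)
The plan is to reduce, via the definition of the restriction norm and a time cut-off, to the explicit Fourier representation of the Duhamel term and then match each resulting piece against the corresponding component of the $W^s$ norm. First I would choose an extension $\widetilde F$ of $F$ off $[0,T]$ with $\|\Lambda^{-1}\widetilde F\|_{W^s}\leq 2\|\Lambda^{-1}F\|_{W^s([0,T])}$, fix a smooth $\psi$ with $\psi\equiv1$ on $[0,1]$ and $\supp\psi\subset[-1,2]$, and set $\psi_T(t):=\psi(t/T)$ for $T\geq1$ (and $\psi_T:=\psi$ for $T\leq1$). Taking the space--time Fourier transform of $w(t):=\int_0^t U_\lambda(t-s)\widetilde F(s)\,ds$ gives the standard identity
\begin{align*}
\widehat{w(t)}_x(\xi)=\int_{\R}\frac{e^{it\tau}-e^{itp_\lambda(\xi)}}{i(\tau-p_\lambda(\xi))}\,\widehat{\widetilde F}(\tau,\xi)\,d\tau ,
\end{align*}
and since $w$ agrees on $[0,T]$ with $\int_0^t U_\lambda(t-s)F(s)\,ds$, it suffices to bound the $W^s$ norm of the explicit extension obtained by keeping the high‑modulation Duhamel integral globally and cutting the remainder off by $\psi_T$. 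Splitting $\widehat{\widetilde F}=\widehat{\widetilde F}\,\chi_{\{|\tau-p_\lambda(\xi)|\geq1\}}+\widehat{\widetilde F}\,\chi_{\{|\tau-p_\lambda(\xi)|<1\}}$, one is led to three types of terms: $\mathcal{F}_{\tau,\xi}^{-1}\bigl[(i(\tau-p_\lambda(\xi)))^{-1}\widehat{\widetilde F}\,\chi_{\{|\tau-p_\lambda|\geq1\}}\bigr]$, a free wave $\psi_T(t)U_\lambda(t)g$, and, after Taylor‑expanding $\tfrac{e^{it(\tau-p_\lambda)}-1}{i(\tau-p_\lambda)}=\sum_{n\geq1}\tfrac{(it)^{n-1}}{n!}(\tau-p_\lambda)^{n-1}$ on the region $|\tau-p_\lambda|<1$, a sum $\sum_{n\geq1}\tfrac{i^{n-1}}{n!}\psi_T(t)\,t^{n-1}U_\lambda(t)g_n$.

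For the first term I would use that on $\{|\tau-p_\lambda|\geq1\}$ the multiplier $\langle\tau-p_\lambda(\xi)\rangle/(i(\tau-p_\lambda(\xi)))$ is bounded and preserves every dyadic modulation shell $B_M$, so that each of the four components of $\|\cdot\|_{W^s}$ applied to it is dominated by the same component of $\|\Lambda^{-1}\widetilde F\|_{W^s}$; here the $P_{D_2}$ component loses nothing because on $D_2$ one has $|\tau-p_\lambda|\gtrsim\langle\xi\rangle^5\gtrsim1$, and the low‑modulation part of $D_2$ is forced into $|\xi|\sim1$ (where $\langle\xi\rangle^{s+s_2}\sim\langle\xi\rangle^s$), so it gets absorbed by the Taylor terms. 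For the free‑wave terms the key point is the single space--time estimate
\begin{align*}
\|\phi(t)\,t^{n-1}U_\lambda(t)g\|_{X_{(2,1)}^{\sigma,b}}+\|\phi(t)\,t^{n-1}U_\lambda(t)g\|_{Y^\sigma}\lesssim C^n\|g\|_{H^\sigma},
\end{align*}
valid for a Schwartz cut-off $\phi$ with a constant independent of $\lambda$ (and of $T\geq1$ after replacing $\phi$ by $\psi(\cdot/T)$), which follows from $\widehat{\phi(t)t^{n-1}U_\lambda(t)g}(\tau,\xi)=\widehat{t^{n-1}\phi}(\tau-p_\lambda(\xi))\,\widehat g(\xi)$ together with the rapid decay of $\widehat{t^{n-1}\phi}$ across the shells $B_M$, which is exactly what makes both the $l^1_k$ sum defining $X_{(2,1)}^{\sigma,b}$ and the $L^1_\tau$ integral defining $Y^\sigma$ converge with a $T$-independent bound. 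It remains to feed in $\|g\|_{H^s}\lesssim\|\Lambda^{-1}\widetilde F\|_{Y^s}\leq\|\Lambda^{-1}\widetilde F\|_{W^s}$ and, for the Taylor terms, $\|g_n\|_{H^s}\lesssim\|\langle\xi\rangle^s\widehat{\widetilde F}\,\chi_{\{|\tau-p_\lambda|<1\}}\|_{L^2_\xi L^1_\tau}\lesssim\|\Lambda^{-1}\widetilde F\|_{Y^s}$ uniformly in $n$, and to sum the resulting series against $1/n!$; since the function so constructed is an admissible extension of $\int_0^t U_\lambda(t-s)F(s)\,ds$ on $[0,T]$, this closes the estimate.

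The analysis here is essentially bookkeeping, and the only component of $W^s$ that enters all three terms is the $Y^s$ piece --- the one with no modulation weight --- which is precisely what allows the borderline Duhamel estimate (modulation weights differing by exactly one power, i.e. the endpoint $b=1/2$ on $D_1$) to hold with no loss; without the $L^1_\tau$ integrability built into $Y^s$ one would be forced to a $T$-power or to $b>1/2$. I expect the only subtle point worth flagging is the uniformity in $T$ of the free‑wave estimate: this is not a consequence of mere $L^2$ bounds on the cut-off, but it does follow from the Schwartz decay of $\widehat\psi$, exactly as in the standard treatment of these Besov--Bourgain spaces.
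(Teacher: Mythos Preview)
The paper does not supply its own proof of this proposition but simply defers to Bejenaru--Tao \cite{BT}; your sketch is precisely the standard argument carried out there---extend $F$, cut off in time, split the Duhamel kernel at $|\tau-p_\lambda(\xi)|=1$, bound the high-modulation piece by the obvious multiplier bound shell-by-shell, and Taylor-expand the low-modulation piece into free waves whose data are controlled via the $Y^s$ component---so the approaches coincide.

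One small overclaim worth correcting: the free-wave bound $\|\psi_T(t)\,t^{n-1}U_\lambda(t)g\|_{X^{\sigma,b}_{(2,1)}}\lesssim C^n\|g\|_{H^\sigma}$ is \emph{not} uniform in $T\geq1$, because the $M\sim1$ modulation shell contributes $\|T\widehat\psi(T\cdot)\|_{L^2(|\sigma|\lesssim1)}\sim T^{1/2}$. This does not damage the proposition as stated (the implicit constant is allowed to depend on $T$), nor the application in Section~4 (the I-method iterates on intervals of length $\sim1$), but you should drop the parenthetical claim of $T$-independence.
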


For the proofs of the above propositions, see \cite{BT}. 

\section{Local well-posedness}
In this section, we establish LWP in $W^s([0,T])$ for $s \geq -38/21$. 
The bilinear estimate in $W^{s}$ is stated as follows. 
\begin{prop} \label{prop_BE_W}
Let $s \geq -38/21$. Then the following estimate holds.
\begin{align} \label{BE_W}
\| \Lambda^{-1} \p_x (uv) \|_{W^s} \lesssim \| u \|_{W^s} \| v \|_{W^s}. 
\end{align}
\end{prop}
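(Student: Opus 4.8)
The plan is to run the Fourier restriction norm argument for bilinear estimates in the spirit of Bejenaru--Tao \cite{BT} and \cite{TK}, adapted to the four pieces of $\|\cdot\|_{W^s}$. First I would Littlewood--Paley decompose $u=\sum_{N_1}u_{N_1}$, $v=\sum_{N_2}v_{N_2}$ in the spatial frequency and dyadically decompose each factor and the output $F:=\Lambda^{-1}\p_x(uv)$ in the modulation $\langle\tau-p_\lambda(\xi)\rangle$, writing $N,N_1,N_2$ and $M,M_1,M_2$ for the dyadic frequencies and modulations (with $(\tau,\xi)=(\tau_1,\xi_1)+(\tau_2,\xi_2)$). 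Since each component of $\|\cdot\|_{W^s}$ is an $l^2_jl^1_k$ Besov--Bourgain norm, the $l^2_j$ sum over the high frequency is handled by Cauchy--Schwarz after isolating the two comparable frequencies in a $high\times high$ interaction, and an $\varepsilon$ of room is kept in every modulation exponent so that the $l^1_k$ sums converge. The $X^{s,1/2}_{(2,1)}$-, $X^{s+s_2,b_2}_{(2,1)}$- and $X^{s,19/42}_{(2,1)}$-parts of $\|F\|_{W^s}$ are estimated by duality, reducing to an $L^2_{t,x}$ bound for a product of three functions, while the $Y^s$-part is treated by Cauchy--Schwarz in $\tau$,
\[
\bigl\|\langle\xi\rangle^s\widehat{F}\bigr\|_{L^2_\xi L^1_\tau}\lesssim\bigl\|\langle\xi\rangle^s\langle\tau-p_\lambda(\xi)\rangle^{1/2+}\widehat{F}\bigr\|_{L^2_{\tau,\xi}},
\]
which turns the $Y^s$ bound into an $X^{s,-1/2+}$-estimate for $F$ that is strictly easier than the $D_1$ estimate.

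The algebraic input is the resonance identity. With $\xi=\xi_1+\xi_2$, $\tau=\tau_1+\tau_2$, put
\[
H:=(\tau-p_\lambda(\xi))-(\tau_1-p_\lambda(\xi_1))-(\tau_2-p_\lambda(\xi_2))=p_\lambda(\xi_1)+p_\lambda(\xi_2)-p_\lambda(\xi).
\]
A direct computation gives $H=-5\,\xi_1\xi_2\xi\,(\xi_1^2+\xi_1\xi_2+\xi_2^2)-3\beta\lambda^{-2}\,\xi_1\xi_2\xi$, and since $\xi_1^2+\xi_1\xi_2+\xi_2^2\gtrsim\max(|\xi_1|,|\xi_2|)^2$ one has, whenever $|\xi_1|,|\xi_2|\gtrsim1$ and $\lambda\geq1$,
\[
|H|\gtrsim|\xi_1|\,|\xi_2|\,|\xi|\,\max(|\xi_1|,|\xi_2|)^2,\qquad\text{so that}\qquad\max(M,M_1,M_2)\gtrsim|H|.
\]
This is the reservoir of derivatives that pays for the factor $|\xi|$ produced by $\p_x$ and for the (below $1/2$) modulation weights used in $D_2$ and $D_3$.

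Next I would split into the standard frequency interaction regimes (it suffices to treat $-38/21\leq s<0$, the regime $s\geq0$ being classical via $\langle\xi\rangle^s\lesssim\langle\xi_1\rangle^s+\langle\xi_2\rangle^s$). In $low\times low\to low$ all frequencies are $\lesssim1$ and the bound is immediate from Bernstein's inequality together with the $Y^s$- and $X^{s,19/42}_{(2,1)}$-parts. In $high\times low\to high$ and $high\times high\to high$ the output weight $\langle\xi\rangle^s$ (recall $s<0$) absorbs the derivative $|\xi|$ once one input weight is spent, and the surviving powers of the large frequency are defeated by $\max(M,M_1,M_2)^{-(1-b)}\gtrsim|H|^{-(1-b)}$ combined with the bilinear $L^2$ estimates (\ref{es_L_4_11})--(\ref{es_L_4_12}) of Lemma~\ref{lem_L_4_1} and (\ref{es_L_4_21})--(\ref{es_L_4_22}) of Lemma~\ref{lem_L_4_2}, the refined versions being used with $K=\inf|\xi_1-\xi_2|\sim\max(|\xi_1|,|\xi_2|)$ in the resonant range. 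One checks that every modulation exponent occurring in $W^s$, namely $1/2$, $b_2=79/168$ and $19/42$, meets the hypotheses $b,b'>3/8$ and $b+b'\geq7/8$ of those lemmas, since $2\cdot\tfrac{79}{168}=\tfrac{79}{84}>\tfrac78$. A subtle subcase is when the dominant modulation is an input modulation lying in $D_2$, so that only the weaker weight $\langle\cdot\rangle^{b_2}$ of that factor is available; but in $D_2$ one has $\langle\tau-p_\lambda(\xi)\rangle\gtrsim|\xi|^5$, so that factor is in fact huge and leaves ample room. The constants $s_2=25/168$ and $b_2=79/168$ are fixed exactly by making the output weight in $D_2$ compatible with the necessary condition (\ref{co_2}).

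The main obstacle is the $high\times high\to low$ interaction feeding the $D_3$-component of the output, i.e.\ $|\xi_1|\sim|\xi_2|\sim N\gg1$ and $|\xi|=|\xi_1+\xi_2|\lesssim1$, where the output is weighted only by $\langle\tau-p_\lambda(\xi)\rangle^{19/42}$, so that $\Lambda^{-1}$ returns $\langle\tau-p_\lambda(\xi)\rangle^{19/42-1}$. Here the resonance gives $\max(M,M_1,M_2)\gtrsim N^4|\xi|$ and $K\sim N$ in Lemmas~\ref{lem_L_4_1}--\ref{lem_L_4_2}. I would separate the cases according to which of the three modulations is largest and arrange the dyadic sums so that, in the worst configuration, the output modulation and the small output frequency $|\xi|$ are coupled through $\langle\tau-p_\lambda(\xi)\rangle\sim N^4|\xi|$; the extra negative exponent $19/42-1=-\tfrac{23}{42}$, which is genuinely below $-\tfrac12$, is precisely what makes the resulting $l^1_k$ modulation sum converge, and after collecting the powers of $N$ one is reduced to an inequality that is saturated exactly at $b_1=\tfrac{19}{42}$, $s=-\tfrac{38}{21}$ --- in agreement with the necessary condition (\ref{co_1}), which reads $b_1\leq\tfrac45 s+\tfrac{19}{10}$ and becomes an equality at these values. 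This is the step where there is no slack and it is what pins down the constants in the statement. The companion $high\times high\to low$ cases with output in $D_2$ are analogous and tight against (\ref{co_2}), while those with a dominant input modulation are again softened by the lower bound $\langle\tau-p_\lambda(\xi)\rangle\gtrsim|\xi|^5$ valid in $D_2$. Summing the dyadic pieces, with the $l^1_k$ sums absorbed by the spare $\varepsilon$'s, then gives (\ref{BE_W}).
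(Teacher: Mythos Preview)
Your plan is essentially the paper's own approach: reduce to dyadic frequency pieces via the $L^2_\xi$-property of $W^s$, split into the six frequency configurations (your low$\times$low, high$\times$low$\to$high, high$\times$high$\to$high, high$\times$high$\to$low correspond to the paper's cases (i)--(vi) of Proposition~\ref{prop_BE1}), use the resonance relation \eqref{alg} to locate the large modulation, and then apply Lemmas~\ref{lem_L_4_1}--\ref{lem_L_4_2} together with H\"older/Young. You correctly isolate the high$\times$high$\to$low interaction into $D_3$ as the borderline case that pins $b_1=19/42$ and $s=-38/21$, and your check that the exponents $1/2$, $79/168$, $19/42$ satisfy the hypotheses of Lemmas~\ref{lem_L_4_1}--\ref{lem_L_4_2} is what the paper uses implicitly.

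One point to correct: your treatment of the $Y^s$-part is not quite right. The Cauchy--Schwarz reduction $\|F\|_{Y^s}\lesssim\|F\|_{X^{s,1/2+}}$ lands you in a norm that is \emph{stronger}, not weaker, than $X^{s,1/2}_{(2,1)}$ (indeed $X^{s,1/2+}\hookrightarrow X^{s,1/2}_{(2,1)}$, not the reverse), so the claim that this is ``strictly easier than the $D_1$ estimate'' is backwards. The paper does not reduce $Y^s$ this way; instead it estimates $\|\langle\xi\rangle^s\langle\tau-p_\lambda(\xi)\rangle^{-1}\widehat{u}*\widehat{v}\|_{L^2_\xi L^1_\tau}$ directly in each subcase (see (Ib) and (IIb)), using the full $\langle\tau-p_\lambda(\xi)\rangle^{-1}$ from $\Lambda^{-1}$ together with $Y^s$-control on the inputs. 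This costs nothing extra and avoids the $\varepsilon$-loss that your reduction incurs. Apart from this, your outline matches the paper's argument.
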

The proof of the bilinear estimate in \cite{TK} is based on the argument of Kenig, Ponce and Vega \cite{KPV96}. 
But this method is not applicable in the proof of the above bilinear estimate because the function space 
$W^s$ is the Besov type space. Then we use the similar argument to Bejenaru and Tao \cite{BT}. 
Note that $W^s$ has the $L_{\xi}^2$-property, namely, 
\begin{align*}
\| u \|_{W^s}^2 \sim \sum_{N \geq 1} \| P_{ \{ \langle \xi \rangle \sim N \} } u \|_{W^s}^2,
\end{align*}  
for dyadic numbers $N$. From this, we can reduce (\ref{BE_W}) to the following. 
\begin{prop} \label{prop_BE1}
Assume that $\widehat{u}$ and $\widehat{v}$ are restricted to $A_{N_1}$ and $A_{N_2}$ for 
dyadic numbers $N_1, N_2 \geq 1$. 
Then we have 
\begin{align} \label{red_BE}
\| P_{\{ \langle \xi \rangle \sim N_0  \} } \Lambda^{-1} \p_x (uv) \|_{W^{s}} \lesssim C(N_0, N_1, N_2) 
\| u \|_{W^s} \| v \|_{W^s},
\end{align}
for a dyadic number $N_0 \geq 1$ in the following six cases. 

\vspace{0.3em}
\noindent
(i) At least two of $ N_0, N_1, N_2$ are less than some universal constant and 
\\   $ C(N_0, N_1, N_2) \sim 1$ . \\ 
(ii) $N_0=1$, $N_1 \sim N_2 \gg 1$ and $C(N_0,N_1, N_2) \sim 1$ . \\
(iii) $N_0 >1$, $N_1 \sim N_2 \gg N_0$ and $C(N_0,N_1,N_2) \sim N_0^{- \delta}$ for some $\delta >0$. \\
(iv) $N_1=1$, $N_0 \sim N_2 \gg 1$ and $C(N_0,N_1, N_2) \sim 1$. \\
(v) $N_1>1$, $N_0 \sim N_2 \gg N_1$ and $C(N_0,N_1,N_2) \sim N_1^{-\delta}$ for some $\delta>0$. \\
(vi) $N_0 \sim N_1 \sim N_2 \gg 1$ and $C(N_0, N_1, N_2) \sim 1$.
\end{prop}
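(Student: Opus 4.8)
The plan is to reduce the bilinear estimate \eqref{BE_W} to the dyadically localized form \eqref{red_BE} and then treat the six interaction regimes by case analysis, in each case placing the output in the component of the $W^s$-norm dictated by the position of the output modulation relative to the regions $D_1$, $D_2$, $D_3$. First I would use the $L^2_\xi$-property of $W^s$ to decompose $u=\sum_{N_1}P_{A_{N_1}}u$ and $v=\sum_{N_2}P_{A_{N_2}}v$, and then dualize: to estimate $\|P_{\{\LR{\xi}\sim N_0\}}\Lambda^{-1}\p_x(uv)\|_{W^s}$ I pair it against a test function $w$ in the predual of $W^s$, reducing everything to a trilinear convolution estimate on the Fourier side. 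The summation over $N_0,N_1,N_2$ will be handled at the end by the gains $C(N_0,N_1,N_2)$, which decay in the relevant dyadic parameter in cases (iii) and (v); in the remaining cases the constant is $O(1)$ but the geometry forces either $N_0=1$ or two frequencies comparable to a fixed constant, so only finitely much has to be summed (cases (i),(ii),(iv)) or the frequencies are all comparable so the $s$-weights balance (case (vi)).

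Next I would organize each case around the standard resonance identity for the Kawahara symbol: writing $H(\xi_1,\xi_2)=p_\lambda(\xi_1+\xi_2)-p_\lambda(\xi_1)-p_\lambda(\xi_2)$, one has, up to lower-order $\beta\lambda^{-2}$ terms, $|H|\sim |\xi_1||\xi_2||\xi_1+\xi_2|(\xi_1^2+\xi_2^2+\cdots)$, so that the maximum of the three modulations $\LR{\tau-p_\lambda(\xi)}$, $\LR{\tau_1-p_\lambda(\xi_1)}$, $\LR{\tau_2-p_\lambda(\xi_2)}$ is $\gtrsim |H|$; in the $high\times high\to low$ regimes this forces a large modulation, which is exactly the mechanism that lets the $X^{s+s_2,b_2}_{(2,1)}$ or $X^{s+1,\cdot}_{(2,1)}$ norms on $D_2\cup D_3$ absorb the derivative loss from $\p_x$ and the negative Sobolev weight. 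Concretely: in cases (ii)--(iii) (the $high\times high\to low$ interaction) the output lies in $D_3$ when $N_0=1$ and may lie in $D_2$ or $D_3$ when $N_0>1$; I would split according to which modulation is dominant, use \eqref{es_L_4_21} and \eqref{es_L_4_22} of Lemma~\ref{lem_L_4_2} (or \eqref{es_L_4_11}, \eqref{es_L_4_12} of Lemma~\ref{lem_L_4_1}) to convert the product into a gain of $|\xi|^{3/4}$ or $|\xi|^{1/2}$ with the separation factor $K^{-3/2}$, and check that the exponents close precisely at $s=-38/21$ with $b_1=19/42$ (this is where the condition \eqref{co_1} is saturated). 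In the $high\times low\to high$ cases (iv)--(v) the output inherits a modulation comparable to the high input's, and one of the inputs has bounded frequency so its norm is essentially $L^2_x L^\infty_t$-like via $Y^s$; here the analysis is milder and the gain $N_1^{-\delta}$ comes from a cheap Cauchy--Schwarz in the low-frequency modulation variable. Case (vi), all frequencies comparable, is the easiest: the $s$-weights cancel, $\p_x$ costs one derivative which is recovered from $\Lambda^{-1}$ together with the $b=1/2$ on $D_1$ (or the resonance bound when the output is in $D_2$), and one finishes with a standard $X^{0,1/2}_{(2,1)}$ bilinear estimate of the type in Lemma~\ref{lem_L_4_1}.

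The main obstacle I expect is the bookkeeping of the output region: because $W^s$ uses \emph{three different} norms on $D_1$, $D_2$, $D_3$ with different exponents $(s,1/2)$, $(s+s_2,b_2)$, $(s,19/42)$, every case must be further subdivided according to whether $(\tau,\xi)$ lands in $D_1$, in $D_2$, or in $D_3$, and in the $D_2/D_3$ sub-cases one must verify that the extra regularity $s_2=25/168$ or the full derivative (in the $Z^s$-style estimate) is enough to beat the worst-case modulation exponent $b_2=79/168$ coming from \eqref{co_2}, while simultaneously the $D_3$-exponent $19/42$ must survive the $high\times high\to low$ scenario — these two constraints together are what pin down $s\ge-38/21$. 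A secondary technical nuisance is that $W^s$ is a Besov-type ($l^1_k$ in modulation) space, so one cannot simply run the Kenig--Ponce--Vega $L^2$-bilinear machinery; instead, following Bejenaru--Tao, I would fix dyadic modulations $M_0,M_1,M_2$, prove the estimate with a gain that is summable in $\min(M_0,M_1,M_2)$ or in the ratio of the largest two, and only then sum the dyadic pieces — the $l^1_k$ summation must be arranged so that the loss from summing in $k$ is strictly beaten by a power of the modulation appearing in Lemmas~\ref{lem_L_4_1}--\ref{lem_L_4_2}. Finally, I would double-check the endpoint arithmetic ($4b_1=38/21$, $s_2$ and $b_2$ chosen from \eqref{co_2}) since the whole result is sharp and leaves no room in the exponents.
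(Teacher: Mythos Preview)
Your plan is correct and matches the paper's proof essentially line for line: the paper also organizes the argument around the resonance relation \eqref{alg}, splits each of the six cases first according to which of the input/output functions is supported in $D_1$ versus $D_2$ and then according to which modulation $M_{max}$ dominates, and closes each subcase with the bilinear Lemmas~\ref{lem_L_4_1}--\ref{lem_L_4_2} (or H\"older/Young when the frequency is very small), summing over the output modulation $M_0$ directly rather than via duality. One small slip: in case (iii) you have $N_0>1$, so the output lies in $D_1\cup D_2$, not in $D_2\cup D_3$; the paper treats the $D_2$-output subcase in (IIb) using \eqref{es_L_4_12} with $K\sim N_1$ and the $D_1$-output subcase in (IIc) using \eqref{es_L_4_22}.
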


By using H\"{o}lder's and Young's inequalities and Lemmas~\ref{lem_L_4_1} and \ref{lem_L_4_2}, 
we obtain the above proposition.  

\begin{proof}[Proof of Proposition~\ref{prop_BE1}] 
We first see the properties of the function space $W^s$. 
From the definition, $\| u \|_{X_{(2,1)}^{s,19/42}}  \lesssim \| u \|_{W^s} \lesssim \| u \|_{X_{(2,1)}^{s,1/2}}$.   
From the Schwarz inequality, $\| u \|_{Y^s} \lesssim \| u \|_{X_{(2,1)}^{s,1/2}}$. 

\vspace{0.3em}

{\bf Estimate for (i).} In this case, all $N_0, N_1$ and $N_2 \lesssim 1$. We use the Young inequality to obtain
\begin{align*}
\sum_{M_0 \geq 1} M_0^{-1/2} \| ( \langle \xi \rangle^s \widehat{u} ) * (\langle \xi \rangle^s \widehat{v}) \|_{L_{\tau,\xi}^2(B_{M_0})} 
\lesssim \| \langle \xi \rangle^s \widehat{u} \|_{L_{\xi}^1 L_{\tau}^2} \| \langle \xi \rangle^s \widehat{v} \|_{L_{\xi}^2 L_{\tau}^1},
\end{align*} 
which is bounded by $\| u \|_{X^{s,0}} \| v \|_{Y^s} $ from the H\"{o}lder inequality. 
In the other cases, we often use the following algebraic relation. 
\begin{align} \label{alg}
 M_{max}:= & \max \bigl\{ |\tau-p_{\lambda} (\xi)|, | \tau_1-p_{\lambda} (\xi_1) | , |\tau_2-p_{\lambda}(\xi_2) | \bigr\} \nonumber \\
& \geq \frac{1}{3} | (\tau-p_{\lambda} (\xi)) -(\tau_1-p_{\lambda} (\xi_1)) -\{ (\tau-\tau_1)-p_{\lambda} (\xi -\xi_1) \} | \nonumber \\ 
& \geq  \frac{5}{6} \bigl| \xi \xi_1 (\xi-\xi_1) \big\{ \xi^2+ \xi_1^2 + (\xi-\xi_1)^2+ \frac{6}{5} \beta \lambda^{-2} \bigr\} \bigr|. 
\end{align}

{\bf Estimate for (ii).} In this case, $\widehat{u} * \widehat{v}$ is supported on $D_3$ and 
$M_{max} \gtrsim |\xi| N_1^4$. In the case $|\xi| \lesssim N_1^{-4} $, we use 
the H\"{o}lder inequality to obtain
\begin{align*}
& N_1^{-2s} \sum_{M_0 \geq 1} 
\| |\xi|~ ( \langle \xi \rangle^s \widehat{u} )* ( \langle \xi \rangle^s \widehat{v}) \|_{ L_{\tau,\xi}^2 (B_{M_0}) } \\
& \lesssim N_1^{-2s-6} \| (\langle \xi \rangle^s \widehat{u}) * (\langle \xi \rangle^s \widehat{v}) \|_{L_{\xi}^{\infty} L_{\tau}^2} ,
\end{align*}
which is bounded by $N_1^{-2s-6} \| u \|_{X^{s,0}} \| v \|_{Y^s}$ from the Young inequality. 
Therefore we assume $N_1^{-4} \lesssim |\xi| \leq 1$. 

\vspace{0.3em}

(Ia) Consider the case $\widehat{u}$ is supported on $D_2$. From the algebraic relation (\ref{alg}), 
either $|\tau-p_{\lambda} (\xi)| \sim |\tau_1-p_{\lambda} (\xi_1)| \gtrsim N_1^5$ or 
$| \tau_1-p_{\lambda} (\xi_1)| \sim |\tau_2-p_{\lambda} (\xi_2)| \gtrsim N_1^5 $. 
In the former case, we use Young's inequality to have
\begin{align*}
& N_1^{-2s} \sum_{ M_0 \gtrsim N_1^5}  M_0^{-1/2} 
\| |\xi| (\langle \xi \rangle^s \widehat{u}) 
* ( \langle \xi \rangle^s \widehat{v} ) \|_{L_{\tau,\xi}^2 (B_{M_0})} \\
& \lesssim N_1^{-2s-5/2} \| ( \langle \xi \rangle^s \widehat{u} ) * 
(\langle \xi \rangle^s \widehat{v}) \|_{ L_{\xi}^{\infty} L_{\tau}^2 } 
 \lesssim  N_1^{-2s-5/2} \| \langle \xi \rangle^s \widehat{u} \|_{L_{\tau,\xi}^2} 
\| \langle \xi \rangle^s \widehat{v} \|_{L_{\xi}^2 L_{\tau}^1},
\end{align*}
which is bounded by $N_1^{-2s-5} \| u \|_{X^{s+s_2,b_2}} \| v \|_{Y^s}$ 
with $s_2=25/168$ and $b_2= 79/168$ from the H\"{o}lder inequality. In the latter case, 
we may assume that $\widehat{v}$ is supported on $D_2$ and $|\tau-p_{\lambda} (\xi)| \lesssim N_1^5$. 
Combining the H\"{o}lder inequality and the Young inequality, we obtain  
\begin{align*}
& N_1^{-2s} \sum_{ 1 \leq M_0 \lesssim N_0^{5} }  M_0^{-23/42} 
\| |\xi|~ (\langle \xi \rangle^s \widehat{u}) 
* ( \langle \xi \rangle^s \widehat{v} ) \|_{L_{\tau,\xi}^2 (B_{M_0})} \\
& \lesssim N_1^{-2s} \| ( \langle \xi \rangle^s \widehat{u} ) * 
(\langle \xi \rangle^s \widehat{v}) \|_{ L_{\xi}^{\infty} L_{\tau}^{\infty} } 
 \lesssim  N_1^{-2s} \| \langle \xi \rangle^s \widehat{u} \|_{L_{\tau,\xi}^2} 
\| \langle \xi \rangle^s \widehat{v} \|_{L_{\xi,\tau}^2 }, \\
& \lesssim N_1^{-2s-5} \| u \|_{X^{s+s_2,b_2}} \| v \|_{X^{s+s_2,b_2}}.
\end{align*}
In the same manner as above, we obtain the desired estimate in the case $\widehat{v}$ is supported on $D_2$. 
Following the above estimates, we only prove (\ref{red_BE}) when both $\widehat{u}$ and $\widehat{v}$ are restricted to $D_1$ 
and $|\tau-p_{\lambda} (\xi)| \lesssim N_1^5 $. 

\vspace{0.3em}

(Ib) Consider the case $M_{max}=|\tau-p_{\lambda} (\xi)|$. We may assume $|\tau-p_{\lambda} (\xi)| \sim |\xi| N_1^4$. 
Firstly, we deal with the case $N_1^{-3/2} \lesssim |\xi| \leq 1$. 
From $|\tau-p_{\lambda} (\xi)| \gtrsim N_1^{5/2}$, we use (\ref{es_L_4_12}) with $K \sim N_1$ to obtain
\begin{align*}
& N_1^{-2s} \sum_{M_0 \gtrsim N_1^{5/2} } M_0^{-23/42}  \| |\xi|~ (\langle \xi \rangle^s \widehat{u}) * 
(\langle \xi \rangle^s \widehat{v}) \|_{L_{\tau,\xi}^2(B_{M_0}) } \\
& \lesssim 
N_1^{-2s-2} \sum_{M_0 \gtrsim N_1^{5/2}} M_0^{-1/21} \| |\xi|^{1/2} ~
(\langle \xi \rangle^s \widehat{u}) * (\langle \xi \rangle^s \widehat{v}) \|_{L_{\tau,\xi}^2 } \\
& \lesssim 
N_1^{-2s-76/21} \| u \|_{X_{(2,1)}^{s,1/2}} \| v \|_{X_{(2,1)}^{s,1/2}}.
\end{align*}   
Secondly, we consider the case $N_1^{-4} \lesssim |\xi| \lesssim N_1^{-3/2}$.  
From $|\tau-p_{\lambda} (\xi)| \lesssim N_1^{5/2}$, we use H\"{o}lder's and Young's inequalities to have 
\begin{align*}
& N_1^{-2s} \sum_{M_0 \lesssim N_1^{5/2} } M_0^{-23/42}  \| |\xi|~ (\langle \xi \rangle^s \widehat{u}) * 
(\langle \xi \rangle^s \widehat{v}) \|_{L_{\tau,\xi}^2(B_{M_0}) } \\
& \lesssim 
N_1^{-2s-6} \sum_{M_0 \gtrsim N_1^{5/2}} M_0^{20/21} \| 
(\langle \xi \rangle^s \widehat{u}) * (\langle \xi \rangle^s \widehat{v}) \|_{L_{\xi}^{\infty} L_{\tau}^2 } \\
& \lesssim 
N_1^{-2s-76/21} \| u \|_{X_{(2,1)}^{s,1/2}} \| v \|_{X_{(2,1)}^{s,1/2}}.
\end{align*}   
Finally, we estimate the $Y^s$ norm of $\Lambda^{-1} \p_x (u v)$. We use H\"{o}lder's inequality to have 
\begin{align*}
& N_1^{-2s} \| |\xi|~ \langle \tau-p_{\lambda} (\xi) \rangle^{-1}
(\langle \xi \rangle^s \widehat{u}) * (\langle \xi \rangle^s \widehat{v}) \|_{L_{\xi}^{2} L_{\tau}^1 } \\
& \hspace{0.8cm} \lesssim 
N_1^{-2s-4} \| (\langle \xi \rangle^s \widehat{u}) * (\langle \xi \rangle^s \widehat{v}) \|_{L_{\xi}^{\infty} L_{\tau}^1 } ,
\end{align*}
which is bounded by $N_1^{-2s-4} \| u \|_{Y^s} \| v \|_{Y^s}$ from Young's inequality. 

\vspace{0.3em}

(Ic) Consider the case $M_{max} =|\tau_1-p_{\lambda}( \xi_1)|$. From $|\tau_1-p_{\lambda} (\xi)| \gtrsim |\xi| N_1^4$, 
We use (\ref{es_L_4_22}) with $K \sim N_1$ to have 
\begin{align*}
& N_1^{-2s} \sum_{1 \leq M_0 \lesssim N_1^{5} } M_0^{-23/42} \| |\xi|~
(\langle \xi \rangle^s \widehat{u}) * (\langle \xi \rangle^s \widehat{v}) \|_{L_{\xi, \tau}^2(B_{M_0}) } \\
& \lesssim 
N_1^{-2s-2} \sum_{1 \leq M_0 \lesssim N_1^{5} } M_0^{-23/42} \| |\xi|^{1/2} 
(\langle \xi \rangle^s \langle \tau -p_{\lambda} (\xi) \rangle^{1/2} \widehat{u})* 
(\langle \xi \rangle^s \widehat{v} ) \|_{L_{\tau,\xi}^2(B_{M_0}) }   \\
& \lesssim 
N_1^{-2s-4} \| u \|_{X^{s,1/2}} \| v \|_{X_{(2,1)}^{s,1/2}}.  
\end{align*}
The case $M_{max}=|\tau_2 -p_{\lambda} (\xi_2)| $ is identical to the above case. 

{\bf Estimate for (iii)} In this case, $M_{max} \gtrsim N_0 N_1^4$ from (\ref{alg}).  

(IIa) Consider the case $\widehat{u}$ is restricted to $D_2$. From $N_1^5 \gg N_0 N_1^4$ and (\ref{alg}), 
either $|\tau-p_{\lambda} (\xi)| \sim |\tau_1-p_{\lambda} (\xi_1) | \gtrsim N_1^5$ or 
$|\tau_1-p_{\lambda} (\xi_1)| \sim |\tau_2-p_{\lambda} (\xi_2)| \gtrsim N_1^5$. 
In the former case, we use the H\"{o}lder inequality and the Young inequality to have 
\begin{align*}
& N_0^{s+1} N_1^{-2s} \sum_{M_0 \gtrsim N_1^{5} } M_0^{-1/2} \| 
(\langle \xi \rangle^s \widehat{u}) * (\langle \xi \rangle^s \widehat{v}) \|_{L_{\tau,\xi}^2(B_{M_0}) } \\
& \lesssim 
N_0^{s+3/2} N_1^{-2s-5/2}  \| 
(\langle \xi \rangle^s  \widehat{u})* (\langle \xi \rangle^s \widehat{v} ) \|_{L_{\xi}^{\infty} L_{\xi}^2}   
 \lesssim 
N_0^{s+3/2} N_1^{-2s-5/2} \| \langle \xi \rangle^s \widehat{u} \|_{L_{\tau,\xi}^2} \| v \|_{Y^s},  
\end{align*}
which is bounded by $N_0^{s+3/2} N_{1}^{-2s-5} \| u \|_{X^{s+s_2,b_2}} \| v \|_{Y^s} $ from the definition. 
In the latter case, we may assume that $|\tau-p_{\lambda} (\xi)| \lesssim N_1^5$ from the above estimate and 
$\widehat{v}$ is supported on $D_2$. Combining the H\"{o}lder inequality and the Young inequality, we have
\begin{align*}
& N_0^{s+1} N_1^{-2s} \sum_{ 1 \leq M_0 \lesssim N_1^{5} } M_0^{-1/2+} \| 
(\langle \xi \rangle^s \widehat{u}) * (\langle \xi \rangle^s \widehat{v}) \|_{L_{\tau,\xi}^2(B_{M_0}) } \\
& \lesssim 
N_0^{s+3/2} N_1^{-2s+}  \| 
(\langle \xi \rangle^s  \widehat{u})* (\langle \xi \rangle^s \widehat{v} ) \|_{L_{\tau, \xi}^{\infty}}   
 \lesssim 
N_0^{s+3/2} N_1^{-2s+} \| \langle \xi \rangle^s \widehat{u} \|_{L_{\tau,\xi}^2} \| \langle \xi \rangle^s \widehat{v} \|_{L_{\tau,\xi}^2},  
\end{align*}
which is bounded by $N_0^{s+3/2} N_1^{-2s-5+} \| u \|_{X^{s+s_2,b_2}} \| v \|_{X^{s+s_2, b_2}}$. 
The case $\widehat{v}$ is supported on $D_2$ is identical to the above case. 
Therefore  we only consider the case both $\widehat{u}$ and $\widehat{v}$ are restricted to $D_1$. 
 
 \vspace{0.3em}
 
 (IIb) Consider the case $M_{max}=|\tau-p_{\lambda} (\xi)|$. From the algebraic relation (\ref{alg}), 
 $\widehat{u} * \widehat{v} $ is supported on $D_2$. We use (\ref{es_L_4_12}) with $K \sim N_1 $ to obtain 
\begin{align*}
& N_0^{s+s_2+1} N_1^{-2s} \sum_{ M_0 \gtrsim N_0 N_1^{4} } M_0^{-1+b_2} \| 
(\langle \xi \rangle^s \widehat{u}) * (\langle \xi \rangle^s \widehat{v}) \|_{L_{\tau,\xi}^2(B_{M_0}) } \\
& \lesssim 
N_0^{s+s_2+b_2} N_1^{-2s-4+4b_2}  \| 
(\langle \xi \rangle^s  \widehat{u})* (\langle \xi \rangle^s \widehat{v} ) \|_{L_{\tau, \xi}^{2}}   
 \lesssim 
N_0^{s+5/42} N_1^{-2s-76/21} \| u \|_{X_{(2,1)}^{s,1/2}} \| v \|_{X_{(2,1)}^{s,1/2}}.  
\end{align*}
 Next, we estimate the norm $Y^s$ of $\Lambda^{-1} \p_x (uv)$. H\"{o}lder's and Young's inequalities imply that
\begin{align*}
 & N_0^{s+1} N_1^{-2s}  \| \langle \tau-p_{\lambda} (\xi) \rangle^{-1}
(\langle \xi \rangle^s \widehat{u}) * (\langle \xi \rangle^s \widehat{v}) \|_{L_{\xi}^2L_{\tau}^1 } \\
& \lesssim 
N_0^{s+1/2} N_1^{-2s-4}  \| 
(\langle \xi \rangle^s  \widehat{u})* (\langle \xi \rangle^s \widehat{v} ) \|_{L_{\xi}^{\infty} L_{\tau}^1}   
 \lesssim 
N_0^{s+1/2} N_1^{-2s-4} \| u \|_{Y^s} \| v \|_{Y^s}. 
\end{align*}
 
 (IIc) Consider the case $M_{max}=|\tau_1-p_{\lambda} (\xi_1)|$. From the above estimate, we may assume 
$|\tau-p_{\lambda}(\xi) | \lesssim N_1^5$. We use (\ref{es_L_4_22}) with $K \sim N_1$ to have
\begin{align*}
& N_0^{s+1} N_1^{-2s} \sum_{ 1 \leq M_0 \lesssim N_1^{5} } M_0^{-1/2+} \| 
(\langle \xi \rangle^s \widehat{u}) * (\langle \xi \rangle^s \widehat{v}) \|_{L_{\tau,\xi}^2(B_{M_0}) } \\
& \lesssim 
N_0^{s+1} N_1^{-2s-2+}  \| \langle \xi \rangle^s  \widehat{u} \|_{L_{\tau,\xi}^2} \| v \|_{X_{(2,1)}^{s,1/2}}   
 \lesssim 
N_0^{s+1/2} N_1^{-2s-4+} \| u \|_{X_{(2,1)}^{s,1/2}} \| v \|_{X_{(2,1)}^{s,1/2}},    
\end{align*}
which is an appropriate bound. In the same manner as above, we obtain the desired estimate in the case 
$M_{max}= |\tau_2-p_{\lambda} (\xi_2)|$ by symmetry. 

\vspace{0.3em}

{\bf Estimate for (iv).} In the case $|\xi_1| \lesssim N_0^{-4}$, we easily obtain the required estimate 
by H\"{o}lder's and Young's inequalities. So we only prove (\ref{red_BE}) when $N_0^{-4} \lesssim | \xi_1| 
\leq 1$. 

\vspace{0.3em}

(IIIa) Consider the case $M_{max} =|\tau_1-p_{\lambda} (\xi_1)|$. From 
$|\tau_1-p_{\lambda} (\xi_1) | \gtrsim |\xi_1| N_0^4$, we use the Young inequality to have
\begin{align*}
 N_0^{1}  \sum_{M_0 \geq 1} M_0^{-1/2} 
\| \widehat{u}*( \langle \xi \rangle^s \widehat{v}) \|_{L_{\tau,\xi}^2 (B_{M_0})} 
& \lesssim     
\| ( |\xi|^{-1/4} \langle \tau-p_{\lambda}(\xi) \rangle^{1/4} \widehat{u})*( \langle \xi \rangle^s \widehat{v}) \|_{L_{\tau,\xi}^2} \\ 
& \lesssim \| |\xi|^{-1/4} \langle \tau-p_{\lambda}(\xi) \rangle^{1/4} \widehat{u} \|_{L_{ \xi}^1 L_{\tau}^2 }
\| v \|_{Y^s},
\end{align*}
which is bounded by $\| u \|_{X^{s,1/4}} \| v \|_{Y^s}$ from the H\"{o}lder inequality. 

\vspace{0.3em}
 
(IIIb) Consider the case $M_{max}=| \tau-p_{\lambda} (\xi) |$. We use Young's inequality to have
\begin{align*}
& N_0^{1}  \sum_{M_0 \gtrsim |\xi_1| N_0^4 } M_0^{-1/2} 
\| \widehat{u}*( \langle \xi \rangle^s \widehat{v}) \|_{L_{\tau,\xi}^2 (B_{M_0})} \\
& \lesssim     
\| ( |\xi|^{-1/4} \widehat{u})*( \langle \xi \rangle^s \widehat{v}) \|_{L_{\tau,\xi}^2} 
\lesssim \| |\xi|^{-1/4} \widehat{u} \|_{L_{ \xi}^1 L_{\tau}^2 }
\| v \|_{Y^s},
\end{align*}
which implies the desired estimate from H\"{o}lder's inequality.

\vspace{0.3em}

(IIIc) Consider the case $M_{max}=| \tau_2-p_{\lambda} (\xi_2)|$. 
We use the Young inequality to have
\begin{align*}
& N_0^{1}  \sum_{M_0 \geq 1 } M_0^{-1/2} 
\| \widehat{u}*( \langle \xi \rangle^s \widehat{v}) \|_{L_{\tau,\xi}^2 (B_{M_0})} \\
& \lesssim     
\| ( |\xi|^{-1/4} \widehat{u})*( \langle \xi \rangle^s \langle \tau-p_{\lambda} (\xi) \rangle^{1/4} \widehat{v}) \|_{L_{\tau,\xi}^2} 
\lesssim \| |\xi|^{-1/4}  \widehat{u} \|_{L_{ \tau,\xi}^1 }
\| v \|_{X^{s,1/4}},
\end{align*}
which implies the required estimate.

\vspace{0.3em}

{\bf Estimate for (v).} From the algebraic relation (\ref{alg}), $M_{max} \gtrsim N_1 N_0^4$.

\vspace{0.3em} 

(IVa) Consider the case $\widehat{u} *\widehat{v}$ is supported on $D_2$. 
From (\ref{alg}), either $|\tau-p_{\lambda} (\xi)| \sim |\tau_1-p_{\lambda} (\xi_1)| \gtrsim N_0^5 $ 
or $|\tau-p_{\lambda} (\xi)| \sim |\tau_2-p_{\lambda} (\xi_2) | \gtrsim N_0^5$.
In the former case, $\widehat{u}$ is supported on $D_2$. We use the Young inequality to have
\begin{align*}
& N_0^{1} N_1^{-s}  \sum_{M_0 \gtrsim N_0^5} M_0^{-1/2} 
\| ( \langle \xi \rangle^s \widehat{u})*( \langle \xi \rangle^s \widehat{v}) \|_{L_{\tau,\xi}^2 (B_{M_0} )} \\
& \lesssim N_0^{-3/2} N_1^{-s}   
\| ( \langle \xi \rangle^s \widehat{u})*( \langle \xi \rangle^s \widehat{v}) \|_{L_{\tau,\xi}^2} 
\lesssim N_0^{-3/2} N_1^{-s} \| \langle \xi \rangle^s \widehat{u} \|_{L_{ \xi}^1 L_{\tau}^2 }
\|  \langle \xi \rangle^s \widehat{ v }\|_{L_{\xi}^2 L_{\tau}^1 } ,
\end{align*}
which is bounded by $ N_1^{-s-7/2} \|  u \|_{X^{s+s_2,b_2}} \| v \|_{Y^s} $ from 
the H\"{o}lder inequality. 
The latter case is almost identical to the above case. 

\vspace{0.3em}

(IVb) Consider the case $\widehat{v}$ is supported on $D_2$. 
From the algebraic relation (\ref{alg}), either $|\tau_2-p_{\lambda} (\xi_2)| \sim |\tau-p_{\lambda} (\xi)| \gtrsim N_0^5 $
 or $|\tau_2-p_{\lambda} (\xi_2)| \sim |\tau_1-p_{\lambda} (\xi_1) | \gtrsim N_0^5$. 
From (IVa), we only prove (\ref{red_BE}) in the latter case. In this case, we may assume that $\widehat{u}$ is supported on $D_2$
 and $|\tau-p_{\lambda} (\xi)| \lesssim N_0^5$. We use the Young inequality to obtain 
\begin{align*}
& N_0^{1} N_1^{-s}  \sum_{ 1 \leq M_0 \lesssim N_0^5} M_0^{-1/2+ } 
\| ( \langle \xi \rangle^s \widehat{u})*( \langle \xi \rangle^s \widehat{v}) \|_{L_{\tau,\xi}^2 (B_{M_0} )} \\
& \lesssim N_0^{1+} N_1^{-s}   
\| ( \langle \xi \rangle^s \widehat{u})*( \langle \xi \rangle^s \widehat{v}) \|_{L_{\xi}^2 L_{\tau}^{\infty} } 
\lesssim N_0^{1+} N_1^{-s} \| \langle \xi \rangle^s \widehat{u} \|_{L_{\xi}^1 L_{\tau}^2 }
\|  \langle \xi \rangle^s \widehat{v}\|_{L_{\tau,\xi}^2 } ,
\end{align*}
which is bounded by $ N_1^{-s-7/2+} \| u \|_{X^{s+s_2,b_2}} \| v \|_{ X^{s+s_2,b_2} } $ from the H\"{o}lder inequality.  
From these estimates, we only show (\ref{red_BE}) in the case both $\widehat{u}* \widehat{v}$ and $\widehat{v}$ are 
restricted to $D_1$.

\vspace{0.3em}

(IVc) Consider the case $\widehat{u}$ is supported on $D_2$. 
We use (\ref{es_L_4_22}) with $K \sim N_0$ to have
\begin{align*}
& N_0^{1} N_1^{-s}  \sum_{1 \leq M_0 \lesssim N_0^5} M_0^{-1/2+} 
\| ( \langle \xi \rangle^s \widehat{u})*( \langle \xi \rangle^s \widehat{v}) \|_{L_{\tau,\xi}^2 (B_{M_0} )} \\
& \lesssim N_0^{-1/2+} N_1^{-s-1/2}   
\|  \langle \xi \rangle^s \widehat{u} \|_{L_{\tau,\xi}^2}  \| v \|_{X_{(2,1)}^{s,1/2}}
\lesssim N_0^{-1/2+} N_1^{-s-3} \| u \|_{X^{s+s_2, b_2} } \| v \|_{X_{(2,1)}^{s,1/2}}. 
\end{align*}
In the case $M_{max}= |\tau_1-p_{\lambda} (\xi_1)| $, we immediately obtain the desired estimate because 
$\widehat{u}$ is supported on $D_2$ from $|\tau_1-p_{\lambda} (\xi_1)| \gtrsim N_1 N_0^4 \gg N_1^5$. 
Therefore we may assume that $\widehat{u}$ is restricted to $D_1$. 

\vspace{0.3em}
 
(IVd) Consider the case $M_{max} = |\tau-p_{\lambda} (\xi) | $. We use (\ref{es_L_4_12}) with $K \sim N_0$ to have
\begin{align*}
& N_0^{1} N_1^{-s}  \sum_{M_0 \gtrsim N_0^4 N_1} M_0^{-1/2} 
\| ( \langle \xi \rangle^s \widehat{u})*( \langle \xi \rangle^s \widehat{v}) \|_{L_{\tau,\xi}^2 (B_{M_0} )} \\
& \lesssim N_0^{-1} N_1^{-s-1/2}   
\| ( \langle \xi \rangle^s \widehat{u})*( \langle \xi \rangle^s \widehat{v}) \|_{L_{\tau,\xi}^2} 
\lesssim N_0^{-3} N_1^{-s-1/2} \| u \|_{X_{(2,1)}^{s,1/2}} \| v \|_{X_{(2,1)}^{s,1/2}}.
\end{align*}
The case $M_{max}=| \tau_2-p_{\lambda}(\xi_2)|$ is almost identical to this case. 

\vspace{0.3em}

{\bf Estimate for (vi).} We now prove (\ref{red_BE}) by using Lemmas~\ref{lem_L_4_1}, \ref{lem_L_4_2} and $M_{max} \gtrsim N_0^5$. 

\vspace{0.3em}

(Va) Consider the case $M_{max} =|\tau-p_{\lambda} (\xi)|$. 
We use (\ref{es_L_4_11}) with $b=b'=7/16$ to have 
\begin{align*}
& N_0^{-s+1} \sum_{M_0 \gtrsim N_0^5} M_0^{-1/2} 
\| (\langle \xi \rangle^s \widehat{u})* ( \langle \xi \rangle^s \widehat{v}) \|_{L_{\tau,\xi}^2(B_{M_0}) } \\
& \lesssim N_0^{-s-3/2} 
\| (\langle \xi \rangle^s \widehat{u})* ( \langle \xi \rangle^s \widehat{v}) \|_{L_{\tau,\xi}^2 } 
\lesssim N_0^{-s-9/4} \| u \|_{X_{(2,1)}^{s,7/16}} \| v \|_{X_{(2,1)}^{s,7/16}},
\end{align*}
which is an appropriate bound because $7/16 < b_2=79/168$.

\vspace{0.3em}

(Vb) Consider the case $M_{max} = |\tau_1-p_{\lambda}(\xi_1)|$. In this case, $\widehat{u}$ is 
supported on $D_2$. We use (\ref{es_L_4_21}) with $b=b'=7/16$ to obtain
\begin{align*}
& N_0^{-s+1} \sum_{M_0 \geq 1} M_0^{-1/2} 
\| (\langle \xi \rangle^s \widehat{u})* ( \langle \xi \rangle^s \widehat{v}) \|_{L_{\tau,\xi}^2(B_{M_0}) } \\
& \lesssim N_0^{-s+1/4} \sum_{M_0 \geq 1} M_0^{-1/16}  
\| \langle \xi \rangle^s \widehat{u} \|_{L_{\tau,\xi}^2 } \| v \|_{X_{(2,1)}^{ s,7/16} } 
\lesssim N_0^{-s-9/4} \| u \|_{X^{s+s_2,b_2} } \| v \|_{X_{(2,1)}^{s,7/16}},
\end{align*}
which shows the required estimate. 
\end{proof}

Combining Propositions~\ref{prop_linear1}, \ref{prop_linear2} and \ref{prop_BE_W}, 
the iteration argument works to construct a local-in-time solution. 
Here $B_r(\mathcal{X})$ for a Banach space $\mathcal{X}$ is defined as 
\begin{align*}
B_r(\mathcal{X}) := \bigl\{ u \in \mathcal{X}~; ~\| u \|_{\mathcal{X}} \leq r \bigr\}. 
\end{align*}
We obtain the local well-posedness for (\ref{Ka}) in the following sense. 

\begin{prop} \label{prop_LWP}
Let $s \geq -2$ and $r \geq 1$. For any $u_0 \in B_r (H^s)$, there exists $T=T(r)>0$ such that the 
unique solution $u \in W^s([0,T])$ satisfies the integral form of (\ref{Ka}) as follows;
\begin{align*}
u(t)=U_1 (t) u_0- \int_0^t U_1 (t-t') \p_x (u^2 (t')) dt'
\end{align*}
Moreover, the data-to-solution map, $H^s \ni u_0 \mapsto u \in W^s([0,T])$, is locally Lipschitz continuous. 
\end{prop}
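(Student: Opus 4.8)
\medskip

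\noindent\emph{Proof proposal.} The plan is to run the standard contraction‑mapping (Picard) argument in the restriction space $W^s([0,T])$, using the three estimates already available: the homogeneous linear estimate (Proposition~\ref{prop_linear1}), the Duhamel estimate (Proposition~\ref{prop_linear2}) and the bilinear estimate \eqref{BE_W} of Proposition~\ref{prop_BE_W}. The only non‑automatic point of the setup is that \eqref{BE_W} carries no positive power of $T$, so one cannot shrink the existence time to beat a large datum directly; instead I would exploit that the problem is subcritical for the scaling \eqref{Ka2}, since $s\geq-2>-7/2$, so that $\|u_{0,\lambda}\|_{H^s}\leq\lambda^{-s-7/2}\|u_0\|_{H^s}$ can be made as small as desired by taking $\lambda=\lambda(r)\geq1$ large. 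Thus, after fixing a small threshold $\delta_0>0$ depending only on the constants in Propositions~\ref{prop_linear1}, \ref{prop_linear2} and \ref{prop_BE_W}, and choosing $\lambda(r)$ with $\lambda^{-s-7/2}r\leq\delta_0$, it is enough to solve the rescaled equation \eqref{Ka2} on the fixed time interval $[0,1]$ for data of size at most $\delta_0$, and then undo the scaling; this produces $T(r):=\lambda(r)^{-5}>0$.

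For the fixed interval I would define on $W^s([0,1])$ the map
\begin{align*}
\Phi(u)(t):=U_{\lambda}(t)\,u_{0,\lambda}-\int_0^t U_{\lambda}(t-t')\,\p_x\big(u(t')^2\big)\,dt',
\end{align*}
which is well defined there since the Duhamel integral at time $t\in[0,1]$ depends only on $u|_{[0,1]}$ (so passing to an extension of $u$ and restricting is unambiguous). Proposition~\ref{prop_linear1} gives $\|U_{\lambda}(\cdot)u_{0,\lambda}\|_{W^s([0,1])}\lesssim\|u_{0,\lambda}\|_{H^s}$; Proposition~\ref{prop_linear2} bounds the Duhamel term by $\|\Lambda^{-1}\p_x(u^2)\|_{W^s([0,1])}$, which by the restriction‑space form of \eqref{BE_W} is $\lesssim\|u\|_{W^s([0,1])}^2$, all constants being uniform in $\lambda\geq1$. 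Hence $\|\Phi(u)\|_{W^s([0,1])}\leq C_1\|u_{0,\lambda}\|_{H^s}+C_2\|u\|_{W^s([0,1])}^2$, and, using $u^2-v^2=(u+v)(u-v)$ together with \eqref{BE_W} once more, $\|\Phi(u)-\Phi(v)\|_{W^s([0,1])}\leq C_2\big(\|u\|_{W^s([0,1])}+\|v\|_{W^s([0,1])}\big)\|u-v\|_{W^s([0,1])}$. On the ball of radius $R:=2C_1\delta_0$ this is a contraction once $\delta_0\leq(8C_1C_2)^{-1}$, yielding a unique fixed point $u_{\lambda}$ there; subtracting the integral equations for two data and absorbing the quadratic term with $R$ small gives the local Lipschitz dependence of $u_{\lambda}$ on $u_{0,\lambda}$.

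Finally I would undo the scaling: $u(t,x):=\lambda^4 u_{\lambda}(\lambda^5 t,\lambda x)$ solves the integral equation in the statement on $[0,T]$ with $T=\lambda^{-5}=T(r)$, it lies in $W^s([0,T])$ because the parabolic‑type scaling (with $\lambda\geq1$) maps $W^s([0,1])$, built from $p_{\lambda}$, boundedly into $W^s([0,T])$, built from $p_1$, and the local Lipschitz property transfers since both scaling maps are linear and bounded on the spaces involved. Uniqueness in all of $W^s([0,T])$ — not just in the ball — would follow from the usual continuity argument: the set of $\tau\in[0,T]$ on which two $W^s([0,T])$ solutions with the same datum agree is closed, and open because at such a $\tau$ they share the $H^s$‑datum and the local contraction can be restarted, its existence time depending only on that $H^s$‑size. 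I expect the only points needing care, beyond bookkeeping, to be (a) verifying that the Besov‑type Bourgain norm $W^s$ behaves well under the rescaling and the time‑restriction with constants uniform in $\lambda\geq1$ — which is implicit in Propositions~\ref{prop_linear1}, \ref{prop_linear2} and \ref{prop_BE_W} — and (b) promoting uniqueness from the contraction ball to the whole solution space; the remainder is routine Picard iteration.
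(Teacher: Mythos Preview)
Your proposal is correct and follows precisely the approach the paper indicates: the paper simply says ``Combining Propositions~\ref{prop_linear1}, \ref{prop_linear2} and \ref{prop_BE_W}, the iteration argument works to construct a local-in-time solution'' and refers to \cite{TK} for the details, and your write-up supplies exactly those details --- Picard iteration in $W^s([0,1])$ after using the subcritical scaling (as in the paragraph around \eqref{Ka2}) to make the datum small, then undoing the scaling to obtain $T(r)=\lambda(r)^{-5}$. The two cautionary points you flag (uniform-in-$\lambda$ constants for the restriction/rescaling and the upgrade of uniqueness from the ball to all of $W^s([0,T])$) are the only places requiring care, and both are handled in the standard way you describe.
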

For the details of the proof, see \cite{TK}. 
Remark that the solution $u$ obtained above satisfies $\| u \|_{W^s ([0,T])} \leq C \| u_0 \|_{H^s}$ 
for some constant $C>0$.

\section{Global well-posedness}

In this section, 
we extend the local-in-time solution obtained in Proposition~\ref{prop_LWP} to global one by the I-method. 
We use the modified energy $E_I^{(4)} (u)$ by adding two suitable correction terms to 
$E_I^{(2)} (u) (t)=\| I u(t) \|_{L^2}^2$, which is introduced by Colliander, Keel, Staffilani, Takaoka and 
Tao \cite{CoKeSt}. They established the almost conservation law for this functional to obtain GWP 
for the KdV equation when $s>-3/4$. 
Before the definition of the modified energies, we state some notations. 
Let $M: \mathbb{R}^k \rightarrow \mathbb{C}$. We say a $k$-multiplier 
$M$ is symmetry if $M(\xi_1, \xi_2,\cdots, \xi_k)= M ( \xi_{\sigma(1)}, \xi_{\sigma(2)}, \cdots ,\xi_{\sigma(k)})$ for all 
$\sigma \in S_k$. 
$[M]_{sym}$ denotes  
\begin{align*}
[M]_{sym} (\xi_1, \xi_2, \cdots, \xi_k) := \frac{1}{ k!} \sum_{\sigma \in S_k} M( \xi_{\sigma(1)}, \xi_{\sigma ( 2) }, \cdots ,\xi_{\sigma(k)}).
\end{align*}
We define a $k$-linear functional $\Lambda_k$ associated to the function $M$ acting on $k$ functions $u_1,u_2, \cdots, u_k,$
\begin{align*}
\Lambda_k (M : u_1, u_2, \cdots, u_k ):= \int_{ \xi_1 +\cdots +\xi_k=0 } M(\xi_1, \xi_2, \cdots,\xi_k ) \prod_{l=1}^{k} \widehat{u}_l (\xi_l).
\end{align*}
$\Lambda_k (M; u, \cdots ,u)$ is simply written as $\Lambda_k (M)$. 

We now define new modified energies by adding some correction terms to the original modified energy $E_I^{(2)} (u)$. 
Following $u$ is real valued and $m$ is even, we have 
\begin{align*}
E_I^{(2)} (u) (t)=\int_{\xi_1+\xi_2=0} m(\xi_1) m(\xi_2) \prod_{k=1}^2 \widehat{u}(\xi_i)
=\Lambda_2 (m(\xi_1) m(\xi_2) ).  
\end{align*}
We denote 
\begin{align*}
a_k:= i \sum_{l=1}^k \xi_l^5, \hspace{0.3cm} b_k:= i \sum_{l=1}^k \xi_l^3.
\end{align*}
We compute the time derivative of $E_I^{(2)} (u) (t)$ to obtain 
\begin{align*}
\frac{d}{ d t} E_I^{(2)} (u) (t)=\Lambda_2 ( (a_2+ \beta \lambda^{-2} b_2 )m(\xi_1) m(\xi_2)) (t)+
\Lambda_3 ( -2 i [ m(\xi_1) m(\xi_{23}) \xi_{23} ]_{sym} ) (t), 
\end{align*}
where $\xi_{ij}:= \xi_i+\xi_j$ for $i \neq j$. Note that the quadratic term vanishes because $a_2=0$ and $b_2=0$. 
So the time derivative of $E_I^{(2)} (u)$ has the cubic form as follows;  
\begin{align*}
\frac{d }{ dt} E_{I}^{(2)} (u) (t)= \Lambda_3 (M_3) (t), \hspace{0.3cm}
M_{3} (\xi_1, \xi_2, \xi_3):= -2i [ m(\xi_1) m(\xi_{23}) \xi_{23} ]_{sym}. 
\end{align*} 
We add a correction term $\Lambda_3(\sigma_3)$ 
to the modified energy $E_I^{(2)} (u) $ to construct a new modified energy $E_I^{(3)} (u)$. 
Namely, 
\begin{align*}
E_I^{(3)} (u)(t):= E_I^{(2)} (u)(t) +\Lambda_3 (\sigma_3) (t),
\end{align*} 
where the symmetric function $\sigma_3$ is determined later. Similar to above,  
the time derivative of $E_I^{(3)} (u)(t)$ is expressed by
\begin{align*}
\frac{d}{ dt }E_I^{(3)} (u)(t)=& \Lambda_3 (M_3) (t)+ \Lambda_3 ( (a_3+ \beta \lambda^{-2} b_3 ) \sigma_3 ) (t) \\
+ & \Lambda_4 (-3 i [\sigma_{3} (\xi_1,\xi_2, \xi_{34}) \xi_{34} ]_{sym} ) (t).
\end{align*}
We choose $\sigma_3= -M_3/(a_3+ \beta \lambda^{-2} b_3 )$ to cancel the cubic terms. Therefore we have 
\begin{align*}
\frac{ d } { dt }E_I^{(3)} (u)(t)= \Lambda_4 (M_4) (t), ~~ 
M_{4} (\xi_1, \xi_2, \xi_3, \xi_4)= -3i [\sigma_3 (\xi_1,\xi_2, \xi_{34}) \xi_{34} ]_{sym}.  
\end{align*}
In the same manner, we define the third modified energy as
\begin{align*}
E_I^{(4)} (u) (t):=E_I^{(3)} (u)(t)+ \Lambda_{4} (\sigma_4) (t), \hspace{0.3em}
\sigma_4:= -M_4/(a_4+  \lambda^{-2} \beta b_4).  
\end{align*}
Then, 
\begin{align*}
\frac{ d } { dt }E_I^{(4)} (u)(t)= & \Lambda_5 (M_5) (t), \\ 
M_{5} (\xi_1, \xi_2, \xi_3, \xi_4, \xi_5 ) = & -4i [\sigma_4 (\xi_1,\xi_2, \xi_3, \xi_{45}) \xi_{45} ]_{sym}.
\end{align*}
Our almost conservation law for the modified energy $E_I^{(4)} (u)$ defined above is stated as follows.

\begin{prop} \label{prop_main}
Let $0>s \geq -38/21$. Then we have
\begin{align} \label{ACL2}
\bigl| E_I^{(4)} (u) (t)-E_I^{(4)} (t_0) \bigr| \lesssim 
N^{5s} \| I u \|_{W^s([t_0-1,t_0+1])}^5,
\end{align}
for any $t_0 \in \mathbb{R}$ and $t \in [t_0-1,t_0+1]$. 
\end{prop}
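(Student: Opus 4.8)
The plan is to integrate the differential identity $\frac{d}{dt}E_I^{(4)}(u)(t)=\Lambda_5(M_5)(t)$ over the interval $[t_0,t]\subset[t_0-1,t_0+1]$, so that it suffices to prove the pointwise-in-time bound
$\bigl|\int_{t_0}^{t}\Lambda_5(M_5)(u(t'))\,dt'\bigr|\lesssim N^{5s}\|Iu\|_{W^s([t_0-1,t_0+1])}^5$. By duality and the definition of $\Lambda_5$, this is a multilinear estimate on the hyperplane $\xi_1+\cdots+\xi_5=0$ with the extra time integration, which one reads off as a spacetime integral of a product of five factors; after placing the multiplier inside the $I$-operators, it reduces to showing that the symbol $M_5$, divided by the five copies of $m(\xi_j)$ that $\|Iu\|_{W^s}$ supplies, is controlled by $N^{5s}$ times a product of weights of the form $\langle\xi_j\rangle^{-s}$ together with harmless powers of the resonance function. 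First I would carry out this symbol analysis: expand $\sigma_4=-M_4/(a_4+\beta\lambda^{-2}b_4)$ and $M_4$, $M_3$ in terms of $m$, and obtain pointwise upper bounds for $|M_5|$ by a careful case division based on the relative sizes of the frequencies $N_j=\langle\xi_j\rangle$ (which ones are $\gtrsim N$, which are comparable, and which interactions are resonant). The key arithmetic input is the factorization of the denominators $a_k+\beta\lambda^{-2}b_k$ analogous to \eqref{alg}, giving $|a_3+\beta\lambda^{-2}b_3|\gtrsim|\xi_1\xi_2\xi_3|(\xi_1^2+\xi_2^2+\xi_3^2+\tfrac{6}{5}\beta\lambda^{-2})$ on the relevant hyperplane, which is what makes $\sigma_3$ and hence $\sigma_4$ and $M_5$ manageable; combined with the mean value theorem applied to $m$ (so that $|m(\xi_1)-m(\xi_2)|\lesssim \tfrac{|\xi_1-\xi_2|}{\langle\xi_{\min}\rangle}m(\xi_{\min})$ type bounds) one extracts the crucial smalling factor $N^{5s}$ in the region where some frequencies are $\gg N$.

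Once the symbol bound is in hand, the second step is to convert the resulting multilinear spacetime integral into norms. Here I would group the five factors as (high $\times$ high $\to$ something) pieces and apply H\"older in $(t,x)$, estimating each pair by the $L^2_{t,x}$ bilinear bounds of Lemmas~\ref{lem_L_4_1} and~\ref{lem_L_4_2} — precisely the mechanism already used in the proof of Proposition~\ref{prop_BE1} — and the remaining single factors by $L^\infty_{t,x}$ via the embedding $Y^s\hookrightarrow L^\infty_{t,x}$ (after the $\langle\xi\rangle^s$ weight) or by the one-dimensional Sobolev/Bernstein inequality on low-frequency pieces. Every factor $\langle\xi_j\rangle^{s}$ that is needed for a $W^s$-norm must be balanced against the corresponding $m(\xi_j)^{-1}$ already consumed by $\|Iu\|_{W^s}$, and the role of $b_1=19/42$ in the $D_3$-part of $W^s$ is exactly to allow Lemma~\ref{lem_L_4_1} to be invoked (since $2\cdot(19/42)>7/8$... indeed one checks $2b_1=19/21>7/8$ and $b_1>3/8$), which is the point where the constraint $s\geq-4b_1=-38/21$ enters. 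The case where all five frequencies are $\lesssim N$ is the benign one: there $m\equiv1$, $I$ acts trivially, $N^{5s}$ is harmless, and one only needs the (sub-critical, by a margin) bilinear machinery.

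The main obstacle will be the symbol estimate for $M_5$ in the genuinely resonant regimes — namely the frequency configurations on $\sum\xi_j=0$ where one or more of the denominators $a_3+\beta\lambda^{-2}b_3$, $a_4+\beta\lambda^{-2}b_4$ (appearing through the symmetrizations defining $\sigma_3$ and $\sigma_4$) degenerates, so the naive lower bound fails and one must show the numerator degenerates at least as fast. Because the Kawahara dispersion $p_\lambda(\xi)=\xi^5+\beta\lambda^{-2}\xi^3$ carries a lower-order term, it has fewer symmetries than the pure-quintic (or the KdV) case, so the cancellations are less transparent and several sub-cases, organized by how many of the $\xi_j$ are small, near-resonant, or comparable, have to be checked separately; this is the bookkeeping-heavy heart of the argument. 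A secondary technical point is making sure that after the $\lambda$-rescaling all constants are uniform in $\lambda\geq1$, which follows because the $\beta\lambda^{-2}$ terms only help (they are lower order and carry a small coefficient), but it must be tracked through the factorizations. Once these pointwise symbol bounds are established, the passage to \eqref{ACL2} is routine H\"older-plus-bilinear-estimates bookkeeping exactly parallel to Section~3.
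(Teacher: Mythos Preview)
Your overall architecture (integrate the identity, bound the symbol $M_5$, then close a five-linear spacetime estimate by H\"older and bilinear bounds) is correct, but you have misidentified both where the difficulty lies and which tool resolves it.

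First, the symbol analysis is not the obstacle. The paper does not redo the resonant case analysis for $M_5$; it simply writes $|M_5|\lesssim|\sigma_4(\xi_3,\xi_4,\xi_5,\xi_{12})\xi_{12}|$ and invokes the $M_4$ bound of Lemma~\ref{lem_M_4} (due to Chen--Guo), which already encodes all the cancellations coming from the mean value theorem and the factorization of $a_4+\beta\lambda^{-2}b_4$. With that in hand the problem reduces, after dyadic decomposition and Cauchy--Schwarz, to two concrete spacetime estimates: a bilinear bound on $\|u_{N_1}u_{N_5}\|_{L^2_{t,x}}$ and a trilinear bound on $\|u_{N_2}u_{N_3}u_{N_4}\|_{L^2_{t,x}}$.

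Second, and more seriously, Lemmas~\ref{lem_L_4_1} and~\ref{lem_L_4_2} are not the right tools here, and your explanation of the threshold is off. The condition $2b_1=19/21>7/8$ is irrelevant to Proposition~\ref{prop_main}; Lemma~\ref{lem_L_4_1} is never invoked in its proof. The decisive estimate is the \emph{improved} bilinear bound (Lemma~\ref{lem_BR}): for $N_1\gg N_2$,
\[
\|u_{N_1}v_{N_2}\|_{L^2_{t,x}}\lesssim N_1^{-4b}\,N_2^{1/2-b}\,\|u_{N_1}\|_{X^{0,1/2}_{(2,1)}}\|v_{N_2}\|_{X^{0,b}_{(2,1)}},
\]
which is what produces a gain of exactly $4b_1$ derivatives on the high-frequency factor when the low-frequency factor sits in $D_3$ (where only $b=19/42$ is available). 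That is the origin of $s\geq-4b_1=-38/21$. Lemma~\ref{lem_BR} is itself proved from \eqref{es_L_4_22}, but the specific high--low formulation with the $N_1^{-4b}$ factor is the new ingredient you are missing; applying \eqref{es_L_4_11}--\eqref{es_L_4_22} directly as in Section~3 would not yield the sharp exponent. For the remaining comparable-frequency cases the paper uses the Kato smoothing and Strichartz bounds of Lemma~\ref{lem_smooth}, which you also do not mention.
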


This estimate implies that the growth of the modified energy $E_I^{(4)} (u)$ in time is sufficiently small. 
Combining this estimate and the fixed time difference stated below, the standard 
argument of the I-method works to establish GWP.  

We prepare some lemmas to prove the above proposition. 
Chen and Guo \cite{CG} used the mean value theorem to obtain 
the upper bound of $M_4$ as follows. 

\begin{lem} \label{lem_M_4}
Let $|\xi_1| \geq |\xi_2| \geq |\xi_3| \geq |\xi_4|$. Then we have
\begin{align} \label{M_4}
|M_4 (\xi_1,\xi_2, \xi_3, \xi_4) | \lesssim \frac{ |a_4+ \beta \lambda^{-2} b_4| m^2 (\xi_4^{*}) }
{(N+|\xi_1|)^2 (N+|\xi_2|)^2 (N+|\xi_3|)^3 (N+|\xi_4|) }
\end{align}
where $\xi_4^{*}= \min \{ |\xi_4|, |\xi_{ij}| \}$. 
\end{lem}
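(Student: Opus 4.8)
The plan is to make the symmetrized multipliers $M_3$ and $\sigma_3$ explicit on the zero‑sum hyperplanes, substitute into the definition $M_4 = -3i[\sigma_3(\xi_1,\xi_2,\xi_{34})\xi_{34}]_{\mathrm{sym}}$, and then read off the decay from a second‑order mean value estimate for the weight $m$; the shape of \eqref{M_4} is exactly what is needed for $\sigma_4 = -M_4/(a_4+\beta\lambda^{-2}b_4)$ to obey a clean pointwise bound, which in turn feeds the estimate for $M_5$ in Proposition~\ref{prop_main}. First I would record the elementary resonance identities on $\{\xi_1+\xi_2+\xi_3=0\}$, namely $\xi_1^3+\xi_2^3+\xi_3^3=3\xi_1\xi_2\xi_3$ and $\xi_1^5+\xi_2^5+\xi_3^5=-5\xi_1\xi_2\xi_3(\xi_1\xi_2+\xi_2\xi_3+\xi_3\xi_1)$, so that $a_3+\beta\lambda^{-2}b_3 = i\,\xi_1\xi_2\xi_3\,\rho$ with $\rho := 3\beta\lambda^{-2}-5(\xi_1\xi_2+\xi_2\xi_3+\xi_3\xi_1)$. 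Since $m$ is even (hence $f(\xi):=m(\xi)^2\xi$ is odd), on the same hyperplane $M_3 = \tfrac{2i}{3}\bigl(f(\xi_1)+f(\xi_2)+f(\xi_3)\bigr)$, which vanishes when $m\equiv1$ and is therefore a genuine difference of the weights. Plugging $\sigma_3 = -M_3/(a_3+\beta\lambda^{-2}b_3)$ into $M_4$, using $\xi_{34}=-\xi_{12}$ on $\{\xi_1+\cdots+\xi_4=0\}$ and $\xi_1^3+\xi_2^3+\xi_{34}^3 = 3\xi_1\xi_2\xi_{34}$ to cancel the factor $\xi_1\xi_2\xi_{34}$, the $S_4$‑symmetrization should collapse to
\[ M_4(\xi_1,\xi_2,\xi_3,\xi_4) = \frac{i}{3}\sum_{\{i,j\}}\frac{f(\xi_i)+f(\xi_j)-f(\xi_i+\xi_j)}{\xi_i\xi_j\,\rho(\xi_i,\xi_j,-\xi_i-\xi_j)}, \]
the sum running over the six unordered pairs $\{i,j\}\subset\{1,2,3,4\}$, so that the whole question reduces to a pointwise estimate on these six quotients.

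For the analytic input I would use that $m$ is smooth, monotone and slowly varying, so that $|f''(\xi)|\lesssim m(\xi)^2/(N+|\xi|)$ and, in particular, $f''\equiv0$ on $\{|\xi|\leq N\}$. Since $g(a,b):=f(a)+f(b)-f(a+b)$ vanishes whenever $a=0$ or $b=0$, the representation $g(a,b)=-\int_0^a\!\!\int_0^b f''(s+t)\,dt\,ds$ yields $|g(\xi_i,\xi_j)|\lesssim|\xi_i||\xi_j|\,\sup\{|f''(u)|:|u|\lesssim\max(|\xi_i|,|\xi_j|)\}$, which in particular is zero unless $\max(|\xi_i|,|\xi_j|)\gtrsim N$. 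For the denominator one has $|\rho(\xi_i,\xi_j,-\xi_i-\xi_j)| = |3\beta\lambda^{-2}+5(\xi_i^2+\xi_i\xi_j+\xi_j^2)|\gtrsim\max(|\xi_i|,|\xi_j|)^2$ as soon as $\max(|\xi_i|,|\xi_j|)\gtrsim1$; the complementary case forces all of $|\xi_i|,|\xi_j|$ to be $O(1)$, hence $m\equiv1$ and $g\equiv0$ there, so the quotient stays bounded and no loss occurs on the resonant set. Combining the two bounds, the factor $\xi_i\xi_j$ cancels and each quotient is dominated by $m(\xi_4^{*})^2$ — the weight at the smallest relevant frequency, which is the most generous choice consistent with monotonicity of $m$ — times an inverse power of the frequencies.

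Finally I would sum over the six pairs and compare with $|a_4+\beta\lambda^{-2}b_4|$. Ordering $|\xi_1|\geq|\xi_2|\geq|\xi_3|\geq|\xi_4|$, the zero‑sum relation forces $|\xi_1|\sim|\xi_2|$; moreover on $\{\xi_1+\cdots+\xi_4=0\}$ one has $\sum_l\xi_l^3 = 3\xi_{12}\xi_{13}\xi_{14}$ and $\sum_l\xi_l^5 = \tfrac{5}{2}\xi_{12}\xi_{13}\xi_{14}\sum_l\xi_l^2$, so $a_4+\beta\lambda^{-2}b_4 = \tfrac{i}{2}\,\xi_{12}\xi_{13}\xi_{14}\,(5\sum_l\xi_l^2+6\beta\lambda^{-2})$, whose last factor is $\gtrsim|\xi_1|^2$ off a harmless low‑frequency set. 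Thus the right‑hand side of \eqref{M_4} already supplies the product $|\xi_{12}||\xi_{13}||\xi_{14}|$ together with a factor $(N+|\xi_1|)^2$, and it remains to match these against the denominators $\xi_i\xi_j\,\rho(\xi_i,\xi_j,-\xi_i-\xi_j)$ appearing in the six terms and to redistribute the leftover powers of $(N+|\xi_l|)$ — using $|\xi_1|\sim|\xi_2|$ to trade $(N+|\xi_1|)^2(N+|\xi_2|)^2$ for $(N+|\xi_1|)^4$ — so as to arrive precisely at the denominator $(N+|\xi_1|)^2(N+|\xi_2|)^2(N+|\xi_3|)^3(N+|\xi_4|)$.

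The analytic estimates above (the bound on $f''$ and the double integral for $g$) are routine. The real work, and the main obstacle, is the bookkeeping in the last step: one must divide into cases according to which of $|\xi_3|$, $|\xi_4|$ and the various $|\xi_{ij}|$ are mutually comparable, determine in each case which pair $\{i,j\}$ in the symmetrization furnishes the cancellation (so that the corresponding $\max(|\xi_i|,|\xi_j|)$ is exactly the frequency one needs to save on), and verify that the $|\xi_i||\xi_j|$ gained from $g$, the quadratic factor $\rho$, the quadratic factor from the factorization of $a_4+\beta\lambda^{-2}b_4$, and the weight $m(\xi_4^{*})^2$ combine to give the \emph{sharp} exponents $(2,2,3,1)$ rather than a weaker power of the smallest frequency. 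The degenerate subcases where $\rho$ or the quadratic factor in $a_4+\beta\lambda^{-2}b_4$ nearly vanishes occur only at $O(1)$ frequencies, where $m$ is constant and $M_4$ itself vanishes, so they cost nothing.
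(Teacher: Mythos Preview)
Your setup is correct and is precisely the route the paper has in mind: the paper does not prove Lemma~\ref{lem_M_4} itself but attributes it to Chen--Guo \cite{CG}, who indeed obtain \eqref{M_4} via the mean value theorem applied to the explicit symmetrized form of $M_4$. Your derivation of the closed formula
\[
M_4(\xi_1,\xi_2,\xi_3,\xi_4)=\frac{i}{3}\sum_{\{i,j\}}\frac{f(\xi_i)+f(\xi_j)-f(\xi_i+\xi_j)}{\xi_i\xi_j\,\rho(\xi_i,\xi_j,-\xi_i-\xi_j)},
\]
the resonance factorizations of $a_3+\beta\lambda^{-2}b_3$ and $a_4+\beta\lambda^{-2}b_4$, and the bounds $|f''(\xi)|\lesssim m(\xi)^2/(N+|\xi|)$, $|\rho_{ij}|\gtrsim\max(|\xi_i|,|\xi_j|)^2$ are all right.

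There is, however, a genuine gap in the final step that is more than bookkeeping. Bounding each of the six quotients separately via the double-integral representation of $g$ is \emph{not} sufficient in every regime. Consider for instance $|\xi_1|\sim|\xi_2|\gg N\gg|\xi_3|\geq|\xi_4|$ with $|\xi_{12}|=|\xi_{34}|\to 0$. The right-hand side of \eqref{M_4} then vanishes like $|\xi_{12}|$ (because $a_4+\beta\lambda^{-2}b_4$ carries the factor $\xi_{12}$), and the pairs $\{1,2\}$ and $\{3,4\}$ are individually fine. But each of the four cross terms $\{1,3\},\{1,4\},\{2,3\},\{2,4\}$ is of size $\sim|\xi_1|^{-3}$ and does \emph{not} tend to zero as $|\xi_{12}|\to 0$. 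What saves you is that $g$ is odd, $g(-a,-b)=-g(a,b)$, while $\rho$ is even, so at $\xi_{12}=0$ (hence $\xi_2=-\xi_1$, $\xi_4=-\xi_3$) the terms $\{1,3\}$ and $\{2,4\}$ cancel exactly, as do $\{1,4\}$ and $\{2,3\}$. To get the sharp bound one must pair these terms and apply a \emph{second} mean value theorem to extract the missing factor $|\xi_{12}|$ from the paired difference; this is the step the paper alludes to when it says the Kawahara case ``has less symmetries'' and the bound is ``hard to obtain''. Your outline should explicitly incorporate this pairing-plus-second-MVT mechanism rather than treat the six terms independently.
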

Compared to the KdV equation, the Kawahara equation has less symmetries. 
So it is hard to obtain this upper bound.  
Next we recall some well-known estimates for the evolution operator $e^{t (\p_x^5 - \lambda^{-2} \beta \p_x^3)}$ as follows. 

\begin{lem} \label{lem_smooth}
Let $N$ be some large dyadic number and $I$ be some time interval satisfying $| I | \lesssim 1 $. Then we have
\begin{align} \label{smooth_0}
\| u_N \|_{L_x^{\infty}L_t^{2} } & \lesssim N^{-2} \| u_N \|_{X_{(2,1)}^{0,1/2}}, \\
\label{smooth_4}
\| u_N \|_{L_x^{4} L_{t}^{\infty}} & \lesssim N^{1/4} \| u_N \|_{X_{(2,1)}^{0,1/2}},  \\
\label{smooth_2}
\| u_N \|_{L_x^{2} L_{t \in I}^{\infty}} & \lesssim N^{5/4} \| u_N \|_{X_{(2,1)}^{0,1/2}}, 
\end{align}
where $u_N= P_{ \{ |\xi| \sim N \}} u$. 
\end{lem}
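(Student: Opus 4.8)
The plan is to deduce each of (\ref{smooth_0})--(\ref{smooth_2}) from the corresponding estimate for the free group $U_\lambda(t)$ applied to data supported on a single spatial frequency block, and then to transfer it to $X_{(2,1)}^{0,1/2}$ by the usual transference argument (as in \cite{BT}); the point of working with the Besov type space $X_{(2,1)}^{0,1/2}$ rather than with $X^{0,b}$ for $b>1/2$ is that the $\ell^1$ summation over the modulation blocks $B_M$ is exactly what makes the endpoint $b=1/2$ admissible.

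First I would record the behaviour of the symbol on the relevant region. For $|\xi|\sim N$ with $N$ a large dyadic number and $\lambda\geq 1$,
\begin{align*}
p_\lambda'(\xi)=\xi^2\bigl(5\xi^2+3\beta\lambda^{-2}\bigr),\qquad p_\lambda''(\xi)=2\xi\bigl(10\xi^2+3\beta\lambda^{-2}\bigr);
\end{align*}
since $|\beta|\leq 1$ and $\lambda\geq 1$, both parenthesised factors are $\sim N^2$ on $\{|\xi|\sim N\}$ once $N$ is large, so that $|p_\lambda'(\xi)|\sim N^4$ and $|p_\lambda''(\xi)|\sim N^3$ uniformly in $\lambda\geq 1$, and $p_\lambda$ is a diffeomorphism on each of the two components of that set. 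Granting this, the free evolution forms of the three bounds, namely
\begin{align*}
\|U_\lambda(t)\phi_N\|_{L_x^\infty L_t^2}&\lesssim N^{-2}\|\phi_N\|_{L^2},\\
\|U_\lambda(t)\phi_N\|_{L_x^4 L_t^\infty}&\lesssim N^{1/4}\|\phi_N\|_{L^2},\\
\|U_\lambda(t)\phi_N\|_{L_x^2 L_{t\in I}^\infty}&\lesssim N^{5/4}\|\phi_N\|_{L^2}
\end{align*}
for $\phi_N=P_{\{|\xi|\sim N\}}\phi$, are the sharp local smoothing and the global and local maximal function estimates of Kenig, Ponce and Vega for one-dimensional dispersive equations with a polynomial symbol of degree five (see, e.g., \cite{KPV91}). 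The first follows directly from the change of variables $\mu=p_\lambda(\xi)$ together with Plancherel in $t$, which gives $\|U_\lambda(t)\phi_N\|_{L_t^2}^2=c\int|\widehat{\phi_N}(\xi)|^2\,|p_\lambda'(\xi)|^{-1}\,d\xi$ uniformly in $x$. The other two are proved by a $TT^*$ argument that reduces the matter to a van der Corput estimate for an oscillatory kernel, using the lower bounds $|p_\lambda'|\sim N^4$ and $|p_\lambda''|\sim N^3$; the lower order term $\beta\lambda^{-2}\partial_x^3$ only reinforces these bounds on $\{|\xi|\geq 1\}$, so the pure $\partial_x^5$ argument carries over uniformly in $\lambda\geq 1$.

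Next I would transfer. Let $Y$ denote any one of $L_x^\infty L_t^2$, $L_x^4 L_t^\infty$, $L_x^2 L_{t\in I}^\infty$, and suppose $\|U_\lambda(t)\phi_N\|_Y\lesssim c(N)\|\phi_N\|_{L^2}$. Writing $u_N(t,x)=\int_{\mathbb{R}}e^{it\theta}\,\bigl(U_\lambda(t)g_\theta\bigr)(x)\,d\theta$ with $\widehat{g_\theta}(\xi)=\widehat{u_N}\bigl(\theta+p_\lambda(\xi),\xi\bigr)$, applying the hypothesis under the $\theta$-integral (using that $Y$ is unchanged under multiplication by $e^{it\theta}$, and that restricting the time integration to $I$ only decreases the norm), then Cauchy-Schwarz in $\theta$ over $\langle\theta\rangle\sim M$, and finally summing the geometric series in the dyadic number $M$, yields
\begin{align*}
\|u_N\|_Y\lesssim c(N)\sum_{M\geq 1}M^{1/2}\,\bigl\|P_{B_M}u_N\bigr\|_{L_{t,x}^2}\sim c(N)\,\|u_N\|_{X_{(2,1)}^{0,1/2}}.
\end{align*}
Specialising $c(N)$ to $N^{-2}$, $N^{1/4}$ and $N^{5/4}$ gives (\ref{smooth_0}), (\ref{smooth_4}) and (\ref{smooth_2}).

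The hard part is (\ref{smooth_2}): it is the only one of the three whose free evolution form genuinely needs the dispersion rather than a soft duality or change of variables argument; a crude Bernstein-in-time bound gives only the loss $N^{5/2}$, and the sharp $N^{5/4}$ is a square-root gain extracted from the $TT^*$ and van der Corput analysis of the local maximal operator. Thus the real content (or the appropriate citation) is the Kenig--Ponce--Vega local maximal function estimate for $e^{itp_\lambda(D)}$, together with the routine verification that its proof is uniform in $\lambda\geq 1$ on the region $|\xi|\sim N$.
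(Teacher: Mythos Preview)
Your proposal is correct and matches what the paper intends: the paper itself gives no proof of this lemma, only the sentence ``For the proof, see \cite{KPV91} and \cite{KPV93}.'' What you have written is precisely the standard derivation those references contain --- the free-evolution local smoothing and (local/global) maximal function estimates for a fifth-order dispersive symbol, plus the routine transference to $X_{(2,1)}^{0,1/2}$ via the decomposition $u_N(t,x)=\int e^{it\theta}\,U_\lambda(t)g_\theta\,d\theta$ and $\ell^1$ summation over modulation blocks --- together with the observation that on $\{|\xi|\sim N\}$ with $N$ large the lower-order term $\beta\lambda^{-2}\xi^3$ does not disturb the bounds $|p_\lambda'|\sim N^4$, $|p_\lambda''|\sim N^3$, so the constants are uniform in $\lambda\geq 1$. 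There is nothing to add.
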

For the proof, see \cite{KPV91} and \cite{KPV93}.
Combining Lemmas~\ref{lem_M_4} and \ref{lem_smooth}, 
Chen and Guo \cite{CG} showed the almost conservation law for $E_I^{(4)} (u)$ 
when $s \geq -7/4$. In fact, the use of $X_{(2,1)}^{s,1/2}$ enables us to 
gain two derivatives by the smoothing effect. 
From this, we obtain the almost conservation law for $s \geq -37/20$ by using this function space.   
But (\ref{BE}) in $X_{(2,1)}^{s,1/2}$ fails for $s<-7/4$.  
When we use the function space $W^s$, two derivatives cannot be recovered by the smoothing effects.  
So it is difficult to obtain the almost conservation law when $s<-7/4$. 
To overcome this difficulty, we establish the improved bilinear estimate which is $L_{t,x}^4$ type Strichartz estimate. 
\begin{lem} \label{lem_BR}
Let $N_1, N_2$ be dyadic numbers such that $N_1 \gg  N_2$ and $N_1 \geq 1$. For $0<b \leq 1/2$, we have
\begin{align} \label{BR}
\| u_{N_1} v_{N_2} \|_{L_{t,x}^2} \leq C   N_1^{-4b}  N_2^{1/2-b}
\| u_{N_1} \|_{X_{(2,1)}^{0,1/2} } \| v_{N_2} \|_{X_{(2,1)} ^{0,b}}.
\end{align}
\end{lem}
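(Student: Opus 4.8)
The plan is to pass to the Fourier side and extract a gain from the transversality of the two characteristic hypersurfaces, which is strong precisely because $N_1 \gg N_2$ and the equation is of fifth order. Since $\|u_{N_1}v_{N_2}\|_{L^2_{t,x}} = \|\widehat{u_{N_1}} * \widehat{v_{N_2}}\|_{L^2_{\tau,\xi}}$, I would first decompose $\widehat{u_{N_1}}$ and $\widehat{v_{N_2}}$ into modulation shells $\chi_{B_{M_1}}\widehat{u_{N_1}}$ and $\chi_{B_{M_2}}\widehat{v_{N_2}}$ with $\langle\tau_j - p_\lambda(\xi_j)\rangle\sim M_j$, and use the triangle inequality together with the $\ell^1$-in-modulation structure of the $X_{(2,1)}^{0,b}$ norm to reduce matters to a single pair of shells. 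For a fixed pair, Cauchy--Schwarz in the convolution variable gives
\begin{align*}
\|\chi_{B_{M_1}}\widehat{u_{N_1}} * \chi_{B_{M_2}}\widehat{v_{N_2}}\|_{L^2_{\tau,\xi}} \lesssim \Bigl(\sup_{\tau,\xi}|E_{\tau,\xi}|\Bigr)^{1/2}\, \|\chi_{B_{M_1}}\widehat{u_{N_1}}\|_{L^2_{\tau,\xi}}\, \|\chi_{B_{M_2}}\widehat{v_{N_2}}\|_{L^2_{\tau,\xi}},
\end{align*}
where $E_{\tau,\xi}$ is the set of $(\tau_1,\xi_1)$ for which both factors lie in their supports and modulation shells.

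The heart of the argument is the measure bound
\begin{align*}
|E_{\tau,\xi}| \lesssim \min\Bigl(N_2,\ \tfrac{M_1+M_2}{N_1^{4}}\Bigr)\cdot\min(M_1,M_2).
\end{align*}
For fixed $\xi_1$, the $\tau_1$-section of $E_{\tau,\xi}$ is the intersection of two intervals of lengths $\sim M_1$ and $\sim M_2$, hence has measure $\lesssim\min(M_1,M_2)$; moreover it is nonempty only when $|R(\xi_1) - (\tau - p_\lambda(\xi))| \lesssim M_1 + M_2$, where $R(\xi_1) := p_\lambda(\xi_1) + p_\lambda(\xi-\xi_1) - p_\lambda(\xi)$ is, up to sign, the resonance function appearing in (\ref{alg}). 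Since $|\xi_1|\sim N_1 \gg N_2 \gtrsim |\xi - \xi_1|$, one has $|\partial_{\xi_1}R(\xi_1)| = |p_\lambda'(\xi_1) - p_\lambda'(\xi-\xi_1)| \sim N_1^{4}$ (the $5\xi_1^{4}$ term dominating), so $R$ is strictly monotone on the relevant range of $\xi_1$ and the admissible $\xi_1$-set has measure $\lesssim (M_1+M_2)/N_1^{4}$; it is trivially $\lesssim N_2$ as well, which yields the claim.

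It then remains to insert the weights $\|\chi_{B_{M_1}}\widehat{u_{N_1}}\|_{L^2_{\tau,\xi}}\le M_1^{-1/2}\|u_{N_1}\|_{X_{(2,1)}^{0,1/2}}$ and $\|\chi_{B_{M_2}}\widehat{v_{N_2}}\|_{L^2_{\tau,\xi}}\le M_2^{-b}\|v_{N_2}\|_{X_{(2,1)}^{0,b}}$ and to sum the dyadic series in $M_1,M_2$, splitting according to the ordering of $M_1,M_2$ and whether $\max(M_1,M_2)\lesssim N_1^{4}N_2$. When $\max(M_1,M_2)\lesssim N_1^{4}N_2$ the first factor of $|E_{\tau,\xi}|^{1/2}$ is $N_1^{-2}(M_1M_2)^{1/2}$, which absorbs the $M_1^{-1/2}$ from $u$, and summing $M_2$ up to $N_1^{4}N_2$ against $M_2^{1/2-b}$ (using $0\le b\le 1/2$) produces $N_1^{-2}(N_1^{4}N_2)^{1/2-b} = N_1^{-4b}N_2^{1/2-b}$. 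When $\max(M_1,M_2)\gg N_1^{4}N_2$ the first factor is $N_2^{1/2}\min(M_1,M_2)^{1/2}$; summing the larger modulation (which is $\gg N_1^{4}N_2$) against its own negative weight and pairing $\min(M_1,M_2)^{1/2}$ with the remaining weight again gives $N_1^{-4b}N_2^{1/2-b}$. Throughout, keeping the weighted $\ell^1$ sums intact (together with $M_j\ge 1$) makes every geometric sum close without a logarithmic loss. The endpoint $b=1/2$ is in addition an immediate consequence of (\ref{es_L_4_12}) applied with $K\sim N_1$.

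I expect the only genuinely delicate point to be the measure bound, and within it the sharp power $N_1^{-4}$: it rests on the transversality estimate $|\partial_{\xi_1}R|\sim N_1^{4}$, hence on the hypothesis $N_1\gg N_2$ (and on the non-vanishing of $p_\lambda'$ away from the origin). Arranging the subsequent dyadic summation so that the exponents $-4b$ and $1/2-b$ come out sharply---especially in the regime where the low-frequency factor $v$ carries only the weaker weight $b<1/2$---requires some bookkeeping but is otherwise routine.
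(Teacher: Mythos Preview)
Your proof is correct, but it takes a more direct route than the paper. You rederive the underlying bilinear smoothing estimate from scratch via Cauchy--Schwarz and the transversality measure bound $|\partial_{\xi_1} R|\sim N_1^4$, and then carry out the full dyadic summation in both modulations. The paper instead avoids redoing this work: since the case $b=1/2$ is already available as \eqref{es_L_4_12} (with $K\sim N_1$, yielding the factor $N_1^{-2}$), it simply splits according to whether $\langle\tau_2-p_\lambda(\xi_2)\rangle$ lies below or above the threshold $N_1^4 N_2$. In the low-modulation regime one pulls out $\langle\tau_2-p_\lambda(\xi_2)\rangle^{1/2-b}\lesssim (N_1^4 N_2)^{1/2-b}$ and applies the $b=1/2$ estimate to what remains, namely to $u_{N_1}$ against the function with Fourier transform $\langle\tau-p_\lambda(\xi)\rangle^{-1/2+b}\widehat{v}_{N_2}$, whose $X_{(2,1)}^{0,1/2}$ norm is exactly $\|v_{N_2}\|_{X_{(2,1)}^{0,b}}$; in the high-modulation regime a crude H\"older--Young bound already gives $N_1^{-4b}N_2^{1/2-b}$. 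Your approach is self-contained and makes transparent exactly where the gain $N_1^{-4b}$ originates; the paper's approach is shorter because it recycles Lemma~\ref{lem_L_4_1}. You in fact noticed the connection yourself in your final remark about the $b=1/2$ endpoint --- the paper's whole proof is essentially that observation plus one line of interpolation by hand.
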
 
This estimate explicitly implies how many derivatives are recovered in the function space $W^s$. 
This plays a crucial role to prove Proposition~\ref{prop_main}.

\begin{proof}[Proof of Lemma~\ref{lem_BR}]
Under this assumption, $|\xi| \sim |\xi_1| \sim N_1$ and $|\xi_2| \sim N_2$. 
From the algebraic relation (\ref{alg}), $M_{max} \gtrsim N_1^4 N_2$. 
In the case $N_2 \lesssim N_1^{-4}$, we easily obtain the desired estimate from 
H\"{o}lder's and Young's inequalities. So we may assume $N_1^4 N_2 \gtrsim 1$. 
Firstly, we consider the case $\langle \tau_2-p_{\lambda} (\xi_2) \rangle \lesssim N_1^4 N_2$. 
From $ \langle \tau_2-p_{\lambda} (\xi_2) \rangle^{1/2-b } \lesssim N_1^{2-4 b } N_2^{1/2-b } $, 
we use (\ref{es_L_4_22}) with $K \sim N_1$ to obtain 
\begin{align*}
\| u_{N_1} v_{N_2} \|_{L_{t,x}^2} \lesssim & 
N_1^{2-4b} N_2^{1/2-b} \| \widehat{u}_{N_1} * ( \langle \tau-p_{\lambda} (\xi) \rangle^{-1/2+b} \widehat{v}_{N_2}) \|_{L_{\tau,\xi}^2 } \\ 
\lesssim &  N_1^{-4 b}  N_2^{1/2-b } 
\| u_{N_1} \|_{X_{(2,1)}^{0,1/2}} \| v_{N_2} \|_{X_{(2,1)}^{0,b}}. 
\end{align*} 
On the other hand, in the case $ \langle \tau_2-p_{\lambda} (\xi_2) \rangle \gtrsim N_1^{4} N_2$, 
we use the H\"{o}lder inequality and the Young inequality to have 
\begin{align*}
\| u_{N_1} v_{N_2} \|_{L_{t,x}^2} \lesssim & \| \widehat{u}_{N_1} \|_{L_{\xi}^2 L_{\tau}^1}
\| \widehat{ v}_{N_2} \|_{L_{\xi}^1 L_{\tau}^2} \\
\lesssim &  N_1^{-4b } N_2^{1/2-b } \| u_{N_1} \|_{X_{(2,1)}^{0,1/2}}
\| v_{N_2} \|_{X^{0,b}}. 
\end{align*}  
\end{proof}

\begin{proof}[Proof of Proposition~\ref{prop_main}]
We may assume $t_0=0$ and $\widehat{u}$ is non-negative. Since 
\begin{align*}
|E_I^{(4)} (u) (t) -E_I^{(4)} (u) (0)| \lesssim \int_{-1}^{1} \Lambda_5 (M_5)(t) dt,
\end{align*}
for any $t \in [-1,1]$, it suffices to show that 
\begin{align} \label{ACL3}
\int_{-1}^1 \Lambda_5 \Bigl( \frac{ M_5 (\xi_1,\xi_2,\xi_3,\xi_4,\xi_5) }{m(\xi_1) m(\xi_2) m(\xi_3)m(\xi_4) m(\xi_5) } \Bigr) (t) dt
\lesssim N^{5s} \| u \|_{W^0([-1,1])}^5.
\end{align}
We suppose that $|\xi_1| \geq |\xi_2| \geq |\xi_3| \geq |\xi_4| \geq |\xi_5|$ without loss of generality. 
If $|\xi_i| \ll N$ for all $i=1,2,3,4,5$, then $M_5$ vanishes. 
So we can assume $|\xi_1| \sim |\xi_2| \gtrsim N$. Note that 
\begin{align*}
|M_5 (\xi_1, \xi_2, \xi_3, \xi_4, \xi_5) | \lesssim |\sigma_4 (\xi_3, \xi_4, \xi_5, \xi_{12}) \xi_{12}|.
\end{align*}
From $\xi_3+ \xi_4+\xi_5+\xi_{12}=0$, we only consider two cases as follows; 
\begin{align*}
\Omega_1&:= \bigl\{ ( \vec{\tau} , \vec{\xi} ) \in \mathbb{R}^5 \times \mathbb{R}^5~; ~
|\xi_1| \sim |\xi_2| \gtrsim |\xi_3| \sim |\xi_{12}| \gtrsim |\xi_4| \geq |\xi_5| \bigr\}, \\
\Omega_2 &:= \bigl\{ (\vec{\tau} , \vec{\xi} ) \in \mathbb{R}^5 \times \mathbb{R}^5~; ~
|\xi_1| \sim |\xi_2| \gtrsim |\xi_3| \sim |\xi_4| \gtrsim \max\{ |\xi_{12}| ,|\xi_5| \} \bigr\}.
\end{align*}
where $\vec{\tau}=(\tau_1,\tau_2, \cdots , \tau_5)$ and $\vec{\xi}= (\xi_1,\xi_2, \cdots ,\xi_5)$. 
In both $\Omega_1$ and $\Omega_2$, we may assume $|\xi_3| \gtrsim N$ and 
the left hand side of (\ref{ACL3}) is bounded by  
\begin{align*}
N^{5s} \int_{-1}^1 \Lambda_5 \Bigl( \langle \xi_1 \rangle^{-s} \langle \xi_2 \rangle^{-s} 
\langle \xi_3 \rangle^{-s-3} \langle \xi_4 \rangle^{-s-1} \langle \xi_5 \rangle^{-s-3} \Bigr) (t) dt,  
\end{align*}
from the upper bound of $M_4$ (\ref{M_4}). 
We use the dyadic decompositions so that the above is equivalent to 
\begin{align*}
N^{5s} \sum_{N_1} \sum_{N_2 \sim N_1} \sum_{N_3 \leq N_2} \sum_{N_4 \leq N_3} \sum_{N_5 \leq N_4} 
N_1^{-s} N_2^{-s} N_3^{-s-3} \langle N_4 \rangle^{-s-1} \langle N_5 \rangle^{-s-3} 
\| \prod_{i=1}^5 u_{N_i} \|_{L_x^1 L_{t \in [-1,1]}^1 },    
\end{align*}
where dyadic numbers $N_i$ for $i=1,2,3,4,5$. 
From the Schwartz inequality, (\ref{ACL3}) is reduced to two estimates as follows.  
\begin{align} \label{es_BE}
N_1^{-s} \langle N_5 \rangle^{-s-3} \| u_{N_1} u_{N_5} \|_{L_{t,x}^2} & \lesssim N_1^{-s-38/21} \| u_{N_1} \|_{W^0} \| u_{N_5} \|_{ W^0 }, \\ 
\label{es_TR}
N_2^{-s}  N_3^{-s-3} \langle N_4 \rangle^{-s-1} \bigl\| \prod_{i=2}^4 u_{N_i}  \bigr\|_{L_{t,x}^2} & \lesssim N_2^{-s-38/21} N_3^{-s-38/21-} 
\prod_{i=2}^4 \| u_{N_i} \|_{W^0} .
\end{align}

We prove these estimates by using the H\"{o}lder inequality, the Sobolev inequality and 
the improved bilinear estimate (\ref{BR}). 
Firstly, we show (\ref{es_BE}). 

\vspace{0.3em}

(Ia) Consider the case $\widehat{u}_{N_1}$ is restricted to $D_2$. 
The H\"{o}lder inequality and the Sobolev inequality imply that 
\begin{align*}
N_1^{-s} \langle N_5 \rangle^{-s-3} \| u_{N_1} u_{N_5} \|_{L_{t,x}^2} \lesssim 
& N_1^{-s} \langle N_5 \rangle^{-s-3} \| u_{N_1} \|_{L_{t,x}^2} \| u_{N_5} \|_{L_{t,x}^{\infty} } \\
\lesssim & N_1^{-s-5/2} \langle N_5 \rangle^{-s-5/2} \| u_{N_1} \|_{X^{s_2, b_2}} \| u_{N_5} \|_{Y^0}, 
\end{align*}
which is an appropriate bound where $s_2=25/168$ and $b_2=79/168$. 
So we only prove (\ref{es_BE}) in the case $\widehat{u}_{N_1}$ is restricted to $D_1$.

\vspace{0.3em}

(Ib) Consider the case $N_1 \sim N_5 \gtrsim N$.  
We combine (\ref{smooth_0})--(\ref{smooth_2}) to obtain the $L_{t,x}^4$ estimate, 
 $\| u_N \|_{L_{t,x}^4} \lesssim N^{-3/8} \| u_N \|_{X_{(2,1)}^{0,3/8}}$, for $N \gg 1$.  
We use this estimate and the H\"{o}lder inequality to obtain 
\begin{align*}
N_1^{-2s-3} \| u_{N_1} u_{N_5} \|_{L_{t,x}^2} & \lesssim N_1^{-2s-3} \| u_{N_1} \|_{L_{t,x}^4} \| u_{N_5} \|_{L_{t,x}^4} \\
& \lesssim 
N_1^{-2s-15/4} \| u_{N_1} \|_{X_{(2,1)}^{0,3/8}} \| u_{N_5} \|_{X_{(2,1)}^{0,3/8}},
\end{align*}
which implies the desired estimate. So we may assume $N_1 \gg N_5$.   
Then we use (\ref{BR}) with $b=19/42$ to have 
\begin{align*}
N_1^{-s} \langle N_5 \rangle^{-s-3} \| u_{N_1} u_{N_5} \|_{L_{t,x}^2} 
\lesssim N_1^{-s-38/21} \langle N_5 \rangle^{-s-62/21} \| u_{N_1} \|_{X_{(2,1)}^{0,1/2}} 
\| u_{N_5} \|_{X_{(2,1)}^{0,19/42} },
\end{align*}
which is an appropriate bound.

\vspace{0.3em}

Secondly, we prove (\ref{es_TR}).

(IIa) Consider the case $\widehat{u}_{N_2}$ is supported on $D_2$. 
We use the H\"{o}lder inequality and  the 
Sobolev inequality to obtain 
\begin{align*}
& N_2^{-s} N_3^{-s-3} \langle N_4 \rangle^{-s-1} \| \prod_{i=2}^4 u_{N_i} \|_{L_{t,x}^2} 
\lesssim N_2^{-s} N_3^{-s-3}  \langle N_4 \rangle^{-s-1} 
\| u_{N_2} \|_{ L_{t,x}^2} \| u_{N_3} \|_{ L_{t,x}^{\infty} }  \| u_{N_4} \|_{ L_{t,x}^{\infty} } \\
& \hspace{0.3cm} \lesssim N_2^{-s-5/2} N_3^{-s-5/2} \langle N_4 \rangle^{-s-1/2}
\| u_{N_2} \|_{X^{ s_2,b_2 } } \| u_{N_3} \|_{Y^0} \| u_{N_4} \|_{Y^0},
\end{align*}
which implies the desired estimate. 
So we only estimate (\ref{es_TR}) in the case $\widehat{u}_{N_2}$ is supported on $D_1$. 

\vspace{0.3em}

(IIb) Consider the case $N_2 \sim N_4 \gtrsim N$. If there exists at least one of 
$i=2,3,4$ such that $\widehat{u}_{N_i}$ is supported on $D_2$, then we immediately 
obtain the desired estimate following the above estimate. 
Therefore we may assume that $\widehat{u}_{N_i}$ is restricted to $D_1$ for all $i=2,3,4$. 
In this case, we use (\ref{smooth_0}), (\ref{smooth_4}) and the H\"{o}lder inequality 
to obtain
\begin{align*}
N_2^{-3s-4} \| \prod_{i=2}^4 u_{N_i} \|_{L_{t,x}^2} &\lesssim N_2^{-3s-4} \| u_{N_2} \|_{L_x^{\infty} L_t^2} 
\| u_{N_3 } \|_{L_x^4 L_t^{\infty}} \| u_{N_4} \|_{L_x^4 L_{t}^{\infty} } \\
& \lesssim N_2^{-3s-11/2}  \prod_{i=2}^4 \| u_{N_i} \|_{X_{(2,1)}^{0,1/2}}. 
\end{align*}

(IIc) Consider the case $N_2 \gg N_4$. We first deal with the case $N_4 \leq 1$. 
We use H\"{o}lder's and Sobolev's inequalities to have
\begin{align*}
N_2^{-s} N_3^{-s-3} 
\| \prod_{i=2}^4 u_{N_i} \|_{L_{t,x}^2} & \lesssim 
N_2^{-s} N_3^{-s-3}  \| u_{N_2} u_{N_4} \|_{L_{t,x}^2} \| u_{N_3} \|_{L_{t,x}^{\infty} } \\
&  \lesssim N_2^{-s} N_3^{-s-5/2} \| u_{N_3} \|_{Y^0 } 
\| u_{N_2} u_{N_4} \|_{L_{t,x}^2 },
\end{align*}
which is bounded by 
\begin{align*}
N_2^{-s-38/21} N_3^{-s-5/2} \| u_{N_2} \|_{X_{(2,1)}^{0,1/2} } \| u_{N_3} \|_{Y^0} 
\| u_{N_4} \|_{X_{(2,1)}^{0,19/42}}
\end{align*}
from (\ref{BR}) with $b=19/42$. Next, 
we prove (\ref{es_TR}) when $N_4 \geq 1$. 
We first estimate in the case $\widehat{u}_{N_4}$ is supported on $D_2$. 
The H\"{o}lder inequality and  the Sobolev inequality imply that 
\begin{align*}
N_2^{-s} N_3^{-s-3} \langle N_4 \rangle^{-s-1} 
\| \prod_{i=2}^4 u_{N_i} \|_{L_{t,x}^2} \lesssim N_2^{-s} N_3^{-s-5/2} \langle N_4 \rangle^{-s-1} \| u_{N_2} u_{N_4} \|_{L_{t,x}^2} \| u_{N_3} \|_{Y^0},
\end{align*}
which is bounded by 
\begin{align*}
N_2^{-s-4b_2} N_3^{-s-5/2} \langle N_4 \rangle^{-s-47/42} 
\| u_{N_2} \|_{X_{(2,1)}^{0,1/2}} \| u_{N_3} \|_{Y^0} \| u_{N_4} \|_{X_{(2,1)}^{s_2,b_2}}
\end{align*} 
from (\ref{BR}) with $b=b_2=79/168$. Next, we consider the case 
$\widehat{u}_{N_4}$ is supported on $D_1$. We use H\"{o}lder's and Sobolev's inequalities 
and (\ref{BR}) with b=1/2 to have 
\begin{align*}  
& N_2^{-s} N_3^{-s-3} \langle N_4 \rangle^{-s-1} 
\| \prod_{i=2}^4 u_{N_i} \|_{L_{t,x}^2} 
\lesssim N_2^{-s} N_3^{-s-5/2} \langle N_4 \rangle^{-s-1} \| u_{N_2} u_{N_4} \|_{L_{t,x}^2} \| u_{N_3} \|_{Y^0} \\
 \hspace{0.3cm} & \lesssim N_2^{-s-2} N_3^{-s-5/2} \langle N_4 \rangle^{-s-1}
\| u_{N_2} \|_{X_{(2,1)}^{0,1/2}} \| u_{N_3} \|_{Y^0} \| u_{N_4} \|_{X_{(2,1)}^{0,1/2}},
\end{align*}
which is an appropriate bound. 
\end{proof}

\vspace{0.3em}

\noindent
{\bf Remark.} We add a suitable correction term to $E_I^{(4)} (u)$ 
to construct a new modified energy $E_I^{(5)} (u)$.  
If we use the modified energy $E_I^{(5)} (u)$, 
we probably obtain the almost conservation law in the same regularity $s=-38/12$. 
This is a reason why we do not expect to gain more $4b_1$ derivatives by smoothing effects 
when the norm in $D_3$ is defined as $\|  \cdot \|_{X_{(2,1)}^{s,b_1}}$ and any derivatives from $M_5 $ bounds. 

\vspace{0.5em}

Next, we estimate the difference between the modified energies $E_I^{(2)}(u) $ and $E_I^{(4)}(u) $ when time is fixed. 
\begin{prop} \label{prop_FTD}
Let $0> s \geq -2$. Then there exists $C>0$ such that 
\begin{align} \label{FTD}
| E_I^{(4)}(u) (t_0) - E_I^{(2)}(u) (t_0) | \leq C ( \| I u(t_0) \|_{L^2}^3+ \| I u(t_0 ) \|_{L^2}^4 ),
\end{align}
for any $t_0 \in \mathbb{R}$.  
\end{prop}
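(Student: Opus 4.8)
The plan is to exploit the very construction of the modified energies. By definition $E_I^{(3)}(u)=E_I^{(2)}(u)+\Lambda_3(\sigma_3)$ and $E_I^{(4)}(u)=E_I^{(3)}(u)+\Lambda_4(\sigma_4)$ with $\sigma_3=-M_3/(a_3+\beta\lambda^{-2}b_3)$ and $\sigma_4=-M_4/(a_4+\beta\lambda^{-2}b_4)$, so that $E_I^{(4)}(u)(t_0)-E_I^{(2)}(u)(t_0)=\Lambda_3(\sigma_3)(t_0)+\Lambda_4(\sigma_4)(t_0)$, and it suffices to prove the fixed-time bounds $|\Lambda_3(\sigma_3)(t_0)|\lesssim\|Iu(t_0)\|_{L^2}^3$ and $|\Lambda_4(\sigma_4)(t_0)|\lesssim\|Iu(t_0)\|_{L^2}^4$. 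Writing $\widehat u(\xi_j)=m(\xi_j)^{-1}\widehat{Iu}(\xi_j)$ on the hyperplane $\{\sum_j\xi_j=0\}$, each form becomes $\Lambda_k(\widetilde\sigma_k;Iu,\dots,Iu)$ with $\widetilde\sigma_k:=\sigma_k/\prod_{j=1}^k m(\xi_j)$, so everything reduces to pointwise bounds on $\widetilde\sigma_3,\widetilde\sigma_4$ together with elementary multilinear estimates of the functions $Iu(t_0)$ in $L^2_x$.

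For $\sigma_3$: on $\{\xi_1+\xi_2+\xi_3=0\}$ the Newton identities give $\sum_j\xi_j^3=3\xi_1\xi_2\xi_3$ and $\sum_j\xi_j^5=\tfrac52\,\xi_1\xi_2\xi_3\sum_j\xi_j^2$, hence $a_3+\beta\lambda^{-2}b_3=i\,\xi_1\xi_2\xi_3\bigl(\tfrac52\sum_j\xi_j^2+3\beta\lambda^{-2}\bigr)$. On the support of $M_3$ some $|\xi_j|\gtrsim N$, so $\tfrac52\sum_j\xi_j^2+3\beta\lambda^{-2}\sim\LR{\xi_{max}}^2\gtrsim N^2$ (for $\lambda\ge1$ and $N$ large the $\beta\lambda^{-2}$ term is harmless), and the denominator is $\sim|\xi_1\xi_2\xi_3|\LR{\xi_{max}}^2$; where $\xi_1\xi_2\xi_3=0$ the numerator $M_3=\tfrac{2i}{3}\sum_j m(\xi_j)^2\xi_j$ also vanishes, so $\sigma_3$ extends to a smooth bounded symbol. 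Since $\sum_j\xi_j=0$ forces two frequencies, say $|\xi_1|\sim|\xi_2|\gtrsim N$, to dominate, the mean value theorem applied to $r\mapsto m(r)^2r$ yields the cancellation $|\sum_j m(\xi_j)^2\xi_j|\lesssim m(\xi_{min})^2\LR{\xi_{min}}$; combining, $|\sigma_3|\lesssim m(\xi_{min})^2\LR{\xi_{max}}^{-4}$, hence $|\widetilde\sigma_3|\lesssim m(\xi_{min})\,m(\xi_{max})^{-2}\LR{\xi_{max}}^{-4}$. For $\sigma_4$ the analogous bound is immediate from Lemma~\ref{lem_M_4}: dividing by $|a_4+\beta\lambda^{-2}b_4|$ gives, for $|\xi_1|\ge|\xi_2|\ge|\xi_3|\ge|\xi_4|$,
$|\sigma_4|\lesssim m(\xi_4^{*})^2\bigl((N+|\xi_1|)^2(N+|\xi_2|)^2(N+|\xi_3|)^3(N+|\xi_4|)\bigr)^{-1}$,
and dividing further by $\prod_j m(\xi_j)$ bounds $\widetilde\sigma_4$.

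Finally, to pass from the symbol bounds to the claimed estimates I would decompose each frequency variable dyadically. Because $\sum_j\xi_j=0$ the two largest frequencies are always comparable, so on a fixed dyadic block I place those two factors in $L^2_x$ and all the remaining (lower-frequency) factors in $L^\infty_x$, using the one-dimensional Sobolev embedding $\|P_Nf\|_{L^\infty_x}\lesssim N^{1/2}\|P_Nf\|_{L^2_x}$; each block is then bounded by $\|\widetilde\sigma_k\|_{L^\infty(\text{block})}$ times a product of $L^2$-norms and half-powers of the low frequencies. Summing the resulting series — using $\sum_{M\le N'}m(M)^2M\lesssim N$ for $s<-1/2$ and the fact that the leftover powers of the top frequency are nonpositive exactly when $s\ge-2$ — gives $|\Lambda_3(\sigma_3)(t_0)|\lesssim\|Iu(t_0)\|_{L^2}^3$ and $|\Lambda_4(\sigma_4)(t_0)|\lesssim\|Iu(t_0)\|_{L^2}^4$, in fact with a spare negative power of $N$. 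The main obstacle is the sharp pointwise estimate for $\sigma_3$ — in particular extracting the cancellation $|\sum_j m(\xi_j)^2\xi_j|\lesssim m(\xi_{min})^2\LR{\xi_{min}}$ in the high-high-low regime and verifying that the resonance denominator is genuinely of size $|\xi_1\xi_2\xi_3|\LR{\xi_{max}}^2$ (nonvanishing thanks to the fifth-order term) — after which the multilinear summation is routine and closes comfortably at the endpoint $s=-2$.
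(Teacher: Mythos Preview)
Your reduction to bounding $\Lambda_3(\sigma_3)$ and $\Lambda_4(\sigma_4)$, the pointwise estimate $|\sigma_3|\lesssim m(\xi_{min})^2\LR{\xi_{max}}^{-4}$, and the dyadic--Bernstein treatment of the trilinear piece are all fine and match the paper's argument in spirit (the paper uses Hausdorff--Young instead of Bernstein for $-2\le s<-7/4$, but your approach closes $\Lambda_3$ as well). The genuine gap is in $\Lambda_4(\sigma_4)$ for $-2\le s<-7/4$. Consider the regime where all four frequencies are comparable and large, $|\xi_1|\sim|\xi_2|\sim|\xi_3|\sim|\xi_4|\sim L\gtrsim N$, but $\xi_4^{*}=\min_{i\neq j}|\xi_{ij}|\ll N$, so that $m(\xi_4^{*})=1$. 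On such a block your bound from Lemma~\ref{lem_M_4} gives $|\widetilde\sigma_4|\lesssim N^{4s}L^{-4s-8}$, and placing two factors in $L^2_x$ and two in $L^\infty_x$ via Bernstein costs an extra factor of $L$; the net power of $L$ is $-4s-7$, which is \emph{positive} once $s<-7/4$, and the sum over $L$ diverges. Your summation sketch (the $\sum_M m(M)^2 M\lesssim N$ device) handles only the genuinely low-frequency factors and does nothing here, because there are none; the claim that ``the leftover powers of the top frequency are nonpositive exactly when $s\ge-2$'' is therefore false for $\Lambda_4$.

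The fix, and this is precisely the extra idea the paper singles out as needed below $s=-7/4$, is to exploit that in this regime $m(\xi_4^{*})=m(\xi_i+\xi_j)$ acts on the \emph{output} of a pair rather than on any single factor. Grouping the four functions into two pairs and writing the form as $\bigl\| m(\xi)\,\bigl(\LR{\xi}^{-s-2}\widehat{u}\bigr)*\bigl(\LR{\xi}^{-s-2}\widehat{u}\bigr)\bigr\|_{L^2_\xi}^2$ (up to harmless factors), one then uses H\"older/Young on the Fourier side together with $\|m\|_{L^2_\xi}\lesssim N^{1/2}$, which replaces the lossy $L$ by $N$ and closes the estimate for all $s\ge-2$. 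The paper also treats the complementary case $\xi_4^{*}=|\xi_4|$ separately, using the extra $m(\xi_4)^2$ to bound $\LR{\partial_x}^{-1}Iu$ in $L^\infty_x$; your scheme does cover that case, but you should make the $\xi_4^{*}=|\xi_{ij}|$ case explicit and incorporate the pairing argument.
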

We call this estimate the fixed point difference.  Compared to the argument of \cite{CoKeSt}, we need to estimate more sharply 
in order to obtain the above estimate when $-2 \leq s <-7/4$ 
\begin{proof}
We may assume that $\widehat{u}$ is non-negative. 
From the definition of the modified energy, it suffices to show  
\begin{align*}
|\Lambda_3 (\sigma_3) (t_0) |  \lesssim \| I u (t_0) \|_{L^2}^3,
\hspace{0.3cm} 
| \Lambda_{4} (\sigma_4)( t_0) |  \lesssim \| I u (t_0) \|_{L^2}^4. 
\end{align*} 
Note that the mean value theorem shows the following $M_3$ bounds as follows;
\begin{align*}
|M_3 (\xi_1, \xi_2, \xi_3) | \lesssim |\xi_3| m^2 (\xi_3),
\end{align*}
where $|\xi_1| \geq |\xi_2| \geq |\xi_3|$. 
From $M_3$ bounds and $M_4$ bounds, (\ref{FTD}) is reduced to the following estimates. 
\begin{align}
\label{sigma_3}
\Bigl| \Lambda_3  \Bigl( \frac{  m (\xi_3)}{\prod_{i=1}^2 (N+|\xi_i| )^2 m( \xi_i) } \Bigr) (t_0) 
\Bigr| \lesssim &  \| u (t_0) \|_{L^2}^3, \\
\label{sigma_4}
\Bigl| \Lambda_4  \Bigl( \frac{ m^{2}(\xi_4^{*}) }{ \prod_{i=1}^4 (N+ |\xi_i| )^2 m(\xi_i) } \Bigr) (t_0) 
\Bigr| \lesssim & \| u (t_0) \|_{L^2}^4.
\end{align}
where $\xi_4^{*} := \min \{ |\xi_i|, |\xi_{jk}| \}$. 
We first prove (\ref{sigma_3}). We only consider $|\xi_1| \sim |\xi_2| \gtrsim N$ since $M_3$ vanishes in the other cases. 
Combining the H\"{o}lder inequality and the Sobolev inequality,  the left hand side of (\ref{sigma_3}) is bounded by  
\begin{align*}
N^{2s}  \Bigl| \int [\langle \p_x \rangle^{-s-2} u (t_0) ]^2 I u(t_0) dx \Bigr|  & \lesssim 
N^{2s} \| \langle \p_x \rangle^{-s-2} u(t_0) \|_{L^4}^2 \| I u(t_0) \|_{L^2} \\
& \lesssim N^{2s} \| u(t_0) \|_{L^2}^3, 
\end{align*}
when $0>s \geq -7/4 $. 
Moreover, in the case $-7/4 > s \geq  -2$, the Sobolev and the Hausdorff-Young inequalities imply 
\begin{align*}
N^{2s} \Bigl| \int [\langle \p_x \rangle^{-s-2} u(t_0) ]^2 I u(t_0) d x  \Bigr| \lesssim & 
N^{2s} \| \langle \p_x \rangle^{-s-2} u(t_0) \|_{L^{2/(-2s-3)}}^2 \| I u(t_0) \|_{L^{1/(2s+4)}} \\
\lesssim & N^{2s} \| u(t_0) \|_{L^2}^2 \| m \widehat{u} (t_0) \|_{L_{\xi}^{1/(-2s-3)}}.
\end{align*} 
Here we use the H\"{o}lder inequality to have
\begin{align*}
\| m \widehat{u} (t_0) \|_{L_{\xi}^{1/(-2s-3)}} \lesssim   \| m \|_{L_{\xi}^{2/(-4s-7)}} \| \widehat{u} (t_0) \|_{L_{\xi}^2} 
\lesssim  N^{-2s-7/2} \| u (t_0) \|_{L^2}, 
\end{align*}
which shows that the left hand side of  (\ref{sigma_3}) is bounded by $N^{-7/2} \| u(t_0) \|_{L^2}^3$.

Next we show (\ref{sigma_4}). We assume $|\xi_1| \geq |\xi_2| \geq |\xi_3| \geq |\xi_4|$ without loss of generality. 
From $M_4$ bounds and Sobolev's inequality, 
the left hand side of (\ref{sigma_4}) is bounded by 
\begin{align*}
N^{4s}  \| \langle \p_x \rangle^{-s-2} u (t_0) \|_{L^4}^4  
\lesssim & N^{4s} \| u (t_0) \|_{L^2}^4
\end{align*}
when $0> s \geq -7/4$. On the other hand, we consider the case $-7/4 >  s \geq -2$. 
In the case $\xi_4^{*}= | \xi_{ij} |$, from $M_4$ bounds, it suffices to show that 
\begin{align*}
N^{4s} \| I [ \langle \p_x \rangle^{-s-2} u (t_0)]^2 \|_{L^2}^2 \lesssim \| u(t_0) \|_{L^2}^4.  
\end{align*}
We use the H\"{o}lder inequality and the Young inequality to have 
\begin{align*}
\| I [\langle \p_x \rangle^{-s-2} u(t_0) ]^2 \|_{L^2} 
= & \| m(\xi) (\langle \xi \rangle^{-s-2} \widehat{u} (t_0) )* (\langle \xi \rangle^{-s-2} \widehat{u} (t_0) ) \|_{L_{\xi}^2} \\
\lesssim 
& \| m \|_{L_{\xi}^2 } \| \langle \xi \rangle^{-s-2} \widehat{u}(t_0) \|_{L_{\xi}^2}^2 
\lesssim N^{1/2} \| u (t_0 ) \|_{L^2}^2,
\end{align*}
which shows the desired estimate. 
Next, we deal with the case $\xi_4^{*}=|\xi_4| $. 
It suffices to show that 
\begin{align*}
N^{3s} \Bigl| \int [\langle \p_x \rangle^{-s-2} u (t_0)]^2 ( \langle \p_x \rangle^{-s-3} u (t_0) ) (\langle \p_x \rangle^{-1} I u(t_0) ) dx
\Bigr| 
\lesssim \| u(t_0) \|_{L^2}^4.
\end{align*}
The Sobolev inequality and 
the H\"{o}lder inequality inequality imply that  
\begin{align*}
& N^{3s} \| \langle \p_x \rangle^{-s-2} u(t_0) \|_{L^2}^2 \| \langle \p_x \rangle^{-s-3} u(t_0) \|_{L^{\infty}}
\| \langle \p_x \rangle^{-1} I u(t_0) \|_{L^{\infty}}  \\
& \hspace{0.3cm} \lesssim  N^{3s} \| u(t_0 ) \|_{L^2}^2 \| \langle \p_x \rangle^{-s-5/2} u(t_0) \|_{L^2} 
\| \langle \p_x \rangle^{-1/2} I u(t_0) \|_{L^2} \lesssim  N^{3s} \| u(t_0) \|_{L^2}^4. 
\end{align*}
\end{proof}

Combining Propositions~\ref{prop_main} and \ref{prop_FTD}, 
we can find a constant $C_1>0$ such that
\begin{align*}
\sup_{-N^{-5s} \leq t \leq N^{-5s}} \| I u(t) \|_{L^2} \leq C_1 \| I u(0) \|_{ L^2 }
\end{align*}
when $0> s \geq -38/21$. For the details, see \cite{CoKeSt}.  
Following 
\begin{align*}
\| I u_{\lambda} (0) \|_{L^2} \leq C_2 \lambda^{-s-7/2} N^{-s} \| u_0 \|_{H^s},
\end{align*}
for some constant $C_2$, we take $\lambda^{-s-7/2} N^{-s} =\varepsilon_0 \ll 1$
If $ \lambda^5 T \leq N^{-5s}$, then we have 
\begin{align*}
& \sup_{-T \leq t \leq T} \| u(t) \|_{H^s} \leq \lambda^{7/2} \sup_{-\lambda^5 T \leq t \leq \lambda^5 T} \| I u_{\lambda} (t) \|_{L^2} \\
& \hspace{0.3cm} \leq   C_1 \lambda^{7/2} \| I u_{\lambda} (0 ) \|_{L^2} \leq  \varepsilon_0 C_1 C_2 \lambda^{-s} N^{-s} \| u_0 \|_{H^s}.
\end{align*}
Therefore we have the following upper bound of the growth order of $H^s$. 
\begin{align*}
\sup_{-T \leq t \leq T} \| u (t) \|_{H^s} \leq C T^{7/5 (2s+5)} \| u_0 \|_{H^s},
\end{align*}
for $-38/21 \leq s <0$.


\begin{thebibliography}{99}
\bibitem{BT}
J. Bejenaru and T. Tao, \textit{Sharp well-posedness and ill-posedness results for a quadratic nonlinear Schr\"{o}dinger equation}, J. Funct. Anal. \textbf{233} (2006), 228--259.
\bibitem{Bo} J. Bourgain, \textit{Fourier restriction phenomena for certain lattice subset applications to nonlinear evolution equation}, 
Geometric and functional Anal. 
\textbf{3} (1993), 107--156, 209--262. 
\bibitem{CG} W. Chen and Z. Guo, \textit{Global well-posedness and I method for the fifth-order Korteweg-de Vries equation}, 
to appear in J. D'Anal. Math. 
\bibitem{CLMW} 
W. Chen, J. Li, C. Miao and J. Wu, \textit{Low regularity solution of two fifth-order KdV type equations}, J. D'Anal. Math. \textbf{107} (2009), 
221--238.
\bibitem{CoKe} J. Colliander, M. Keel, G. Staffilani, H. Takaoka and T. Tao,
\textit{Global well-posedness for KdV in Sobolev spaces of negative index }, Electron. J. Differential Equations {\bf 2001}, No. 26, pp. 1--7. 
\bibitem{I02}
J. Colliander, M. Keel, G. Staffilani, H. Takaoka and T. Tao,
\textit{Almost conservation laws and global rough solutions to a nonlinear Sch\"{o}dinger equation}, 
Math. Res. Lett. {\bf 9} (2002), 659--682. 
\bibitem{CoKeSt} J. Colliander, M. Keel, G. Staffilani, H. Takaoka and T. Tao, 
\textit{Sharp global well-posedness for KdV and modified KdV on $\mathbb{R}$ and $\mathbb{T}$}, J. Amer. Math. Soc. 
\textbf{16} (2003), 705--749. 
\bibitem{CDT} S. Cui, D. Deng and S. Tao,  
\textit{Global existence of solutions for the Cauchy problem of the Kawahara equation with $L^2$ initial data}, Acta Math. Sin. {\bf 22} (2006), 
1457--1466. 
\bibitem{Gu} Z. Guo, \textit{Global well-posedness of Korteweg-de Vries equation in $H^{-3/4}$}, J. Math, Pures Appl. {\bf 91} 
(2009), 583--597.  
\bibitem{TK} T. K. Kato, \textit{Local well-posedness for Kawahara equation}, Adv. Differential Equations {\bf 16} (2011), 
no. 3--4, 257--287. 
\bibitem{TK1}
T. K. Kato, \textit{Well-posedness for the fifth order KdV equation} to appear in Fankcialaj Ekvacioj.
\bibitem{Ka}
T. Kawahara, \textit{Oscillatory solitary waves in dispersive media}, J. Phys. Soc. Japan, {\bf 33} (1972), 260--264.
\bibitem{KPV91} C. E. Kenig, G. Ponce and L. Vega, \textit{Oscillatory integrals and regularity of dispersive equations}, Indiana Univ. 
Math. J. {\bf 40} (1991), 33--69. 
\bibitem{KPV93} C. E. Kenig, G. Ponce and L. Vega, \textit{Well-posedness and scattering results for the generalized 
Korteweg-de Vries equation via the contraction principle}, Comm. Pure Appl. Math. {\bf 46} (1993), 
527--620. 
\bibitem{KPV96} C. E. Kenig, G. Ponce, and L. Vega, 
 \textit{A bilinear estimate with applications to the KdV equation}, J. Amer. Math. Soc, \textbf{9} (1996) no. 2, 573--603.
\bibitem{Ki}
N. Kishimoto, \textit{Well-podeness of the Cauchy problem for the Korteweg-de Vries equation at critical regularity}, 
Differential Integral Equations {\bf 22} (2009), 447--464. 
\bibitem{KT}
N. Kishimoto and K. Tsugawa, \textit{Local well-posedness for quadratic Schr\"{o}dinger equations and "good'' Boussinesq equation},
 Differential Integral Equations \textbf{23} (2010), no. 5--6, 463--493.
\bibitem{Ta}
T. Tao, \textit{Multiplier weighted convolution of $L^2$ functions and application to nonlinear dispersive equations}, 
Amer. J. Math. {\bf 123} (2001), 839--908. 
\bibitem{WCD}
H. Wang, S. Cui and D. Deng, \textit{Global existence of solutions for the Kawahara equation in Sobolev space of negative indices}, 
Acta. Math. Sin. {\bf 23} (2007), 1435--1446. 
\bibitem{YL} W. Yan and Y. Li, \textit{The Cauchy problem for Kawahara equation in Sobolev spaces with low regularity}, Math. Method Appl. 
Sci. {\bf 33} (2010), no. 14, 1647--1660.
\end{thebibliography}
\end{document}